\documentclass[11pt, reqno, psamsfonts]{amsart}

\pdfoutput=1
\usepackage{multirow}
\usepackage{tcolorbox}
\usepackage{graphics}
\usepackage[normalem]{ulem}
\usepackage{soul}
\usepackage{comment}

\usepackage{amssymb}
\usepackage{amsthm}
\usepackage{amsmath}
\usepackage{latexsym}
\usepackage[T1]{fontenc}
\usepackage[utf8]{inputenc}
\usepackage[russian, french, english]{babel}

\usepackage{graphicx}
\usepackage{wrapfig, subcaption}
\usepackage[justification=centering, labelfont=bf]{caption}
\usepackage{mathtools}
\usepackage{tikz}
\usepackage{amsbsy}
\usepackage{enumitem}
\usepackage{setspace}
\usepackage{mathrsfs}
\usetikzlibrary{shapes,snakes}
\usetikzlibrary{arrows.meta}
\usetikzlibrary{decorations.pathmorphing}
\usetikzlibrary{patterns}
\usepackage{float}
\usepackage{array}
\usepackage{multicol}
\usepackage{stmaryrd}
\usepackage{cancel} %to strike through text
\usepackage{lmodern}
%\linespread{1.03}
%\usepackage{mathabx}%changes set symbols
%\sffamily changes title fonts
\usepackage{algpseudocode}
\usepackage{algorithm}
\usepackage{upgreek}
\usepackage{titlesec}
\usepackage{bbm}
\usepackage[spacing=true,kerning=true,babel=true,tracking=true]{microtype}
\usepackage[shortcuts]{extdash}
\usepackage[foot]{amsaddr}
\usepackage[left=2cm,right=2cm,top=2cm,bottom=2cm,bindingoffset=0cm]{geometry}
\usepackage{bm}
\usepackage{centernot}
\usepackage{mdframed}
\usepackage{multirow}
\usepackage{hyperref}
\hypersetup{
    colorlinks=true,%change to true to get colors
    linkcolor=blue,
    filecolor=magenta,
    urlcolor=blue,
    citecolor=gray,
}
\allowdisplaybreaks

\usepackage{nameref}
\usepackage{color}

\makeatletter
\let\orgdescriptionlabel\descriptionlabel
\renewcommand*{\descriptionlabel}[1]{%
  \let\orglabel\label
  \let\label\@gobble
  \phantomsection
  \edef\@currentlabel{#1}%
  \let\label\orglabel
  \orgdescriptionlabel{#1}%
}
\makeatother

\makeatletter
\newenvironment{breakablealgorithm}
  {% \begin{breakablealgorithm}
   \begin{center}
     \refstepcounter{algorithm}% New algorithm
     \hrule height.8pt depth0pt \kern2pt% \@fs@pre for \@fs@ruled
     \renewcommand{\caption}[2][\relax]{% Make a new \caption
       {\raggedright\textbf{\ALG@name~\thealgorithm} ##2\par}%
       \ifx\relax##1\relax % #1 is \relax
         \addcontentsline{loa}{algorithm}{\protect\numberline{\thealgorithm}##2}%
       \else % #1 is not \relax
         \addcontentsline{loa}{algorithm}{\protect\numberline{\thealgorithm}##1}%
       \fi
       \kern2pt\hrule\kern2pt
     }
  }{% \end{breakablealgorithm}
     \kern2pt\hrule\relax% \@fs@post for \@fs@ruled
   \end{center}
  }
\makeatother

\usepackage[backend=biber, style=alphabetic, firstinits=true, sorting=nyt, maxnames=100,backref=true]{biblatex}
\addbibresource{references.bib}

\DeclareSourcemap{
  \maps[datatype=bibtex, overwrite]{
    \map{
      \step[fieldset=DOI, null]
      \step[fieldset=URL, null]
      \step[fieldset=ISSN, null]
      \step[fieldset=primaryclass, null]
    }
  }
}

\DeclareNameAlias{author}{last-first}

\title{Coloring Locally Sparse Graphs}
\date{}
\author{\lsstyle James~Anderson$^\dagger$}
\email{james.anderson@math.gatech.edu}
\author{\lsstyle Abhishek~Dhawan$^\star$}
\email{adhawan2@illinois.edu}
\author{\lsstyle Aiya~Kuchukova$^\dagger$}
\email{aiya.kuchukova@math.gatech.edu}
\address{\textls{\normalfont{}$^\dagger$School of Mathematics, Georgia Institute of Technology, Atlanta, GA, USA}}
\address{\normalfont{}$^\star$\textls{Department of Mathematics, University of Illinois Urbana-Champaign, Urbana, IL, USA}}
\thanks{J.A.'s research was partially supported by NSF CAREER grant DMS-2239187 (PI: Anton Bernshteyn). \newline A.D.'s research was partially supported by the Georgia Tech ARC-ACO Fellowship, NSF grant DMS-2053333 (PI: Cheng Mao), NSF CAREER grant DMS-2239187 (PI: Anton Bernshteyn), and NSF RTG grant DMS-1937241. \newline A.K.'s research was partially supported by the Georgia Tech ARC-ACO Fellowship.}

\newtheoremstyle{bfnote}%
{}{}%
{\slshape}{}%
{\bfseries}{\bfseries.}%
{ }%
{\thmname{#1}\thmnumber{ #2}\thmnote{ \ep{\normalfont{}#3}}}

\newtheoremstyle{Claim}%
{}{}%
{\slshape}{}%
{\itshape}{.}%
{ }%
{\thmname{#1}\thmnumber{ #2}\thmnote{ \ep{\normalfont{}#3}}}

\theoremstyle{bfnote}
\newtheorem{theo}{Theorem}[section]
\newtheorem*{theo*}{Theorem}
\newtheorem{prop}[theo]{Proposition}
\newtheorem*{prop*}{Proposition}
\newtheorem{Lemma}[theo]{Lemma}

\newtheorem{corl}[theo]{Corollary}
\newtheorem{conj}[theo]{Conjecture}
\newtheorem{fact}[theo]{Fact}

\newtheorem*{corl*}{Corollary}

\theoremstyle{definition}
\newtheorem{defn}[theo]{Definition}
\newtheorem*{defn*}{Definition}

\newtheorem*{exmp*}{Example}

\theoremstyle{remark}
\newtheorem*{ques*}{Question}
\newtheorem*{remk*}{Remark}

\theoremstyle{Claim}
\newtheorem{claim}{Claim}[theo]

% \newcounter{ForClaims}[section]
% \newtheorem{claim}{Claim}[ForClaims]
% \renewcommand*{\theclaim}{\theForClaims.\alph{claim}}
% \newtheorem*{claim*}{Claim}

\makeatletter
\newcommand{\neutralize}[1]{\expandafter\let\csname c@#1\endcsname\count@}
\makeatother

\newenvironment{claimproof}{\noindent$\rhd$\hspace{1em}}{\hfill$\blacktriangleleft$\smallskip}

\newcommand{\0}{\emptyset}
\newcommand{\dlll}{\mathfrak{d}}
\newcommand{\set}[1]{\{#1\}}
\newcommand{\N}{{\mathbb{N}}}

\newcommand{\R}{\mathbb{R}}

\renewcommand{\P}{\mathbb{P}}
\newcommand{\E}{\mathbb{E}}
\renewcommand{\epsilon}{\varepsilon}
\newcommand{\eps}{\epsilon}
\renewcommand{\phi}{\varphi}
\renewcommand{\theta}{\vartheta}
\renewcommand{\leq}{\leqslant}
\renewcommand{\geq}{\geqslant}
\newcommand{\defeq}{\coloneqq}
\newcommand{\im}{\mathsf{im}}
\newcommand{\bemph}[1]{{\normalfont#1}} % define how emphasised brackets should look
\newcommand{\ep}[1]{\bemph{(}#1\bemph{)}} % parentheses

\newcommand{\pto}{\dashrightarrow}
\newcommand{\emphdef}[1]{\textbf{\textit{{#1}}}}
\newcommand{\keep}{\mathsf{keep}}
\newcommand{\uncolor}{\mathsf{uncolor}}
\newcommand{\blank}{\mathsf{blank}}

\newcommand{\dom}{\mathsf{dom}}
\newcommand{\eq}{\mathsf{eq}}
\newcommand{\Ber}{\mathsf{Bernoulli}}

\numberwithin{equation}{section}
\newcommand{\emphd}[1]{\emphdef{#1}}
\newcommand{\Bad}{\mathsf{Bad}(c)}
\newcommand{\Good}{\mathsf{Good}(c)}
\newcommand{\keptedges}{E_{K}(v)}
\newcommand{\uncoloredges}{E_{U}(v)}
\newcommand{\normalizeddeg}{d_{K\cap U}(v)}
\newcommand{\nd}{\normalizeddeg}

\newcommand{\symb}{\#}

\newcommand{\keptEdges}{|\keptedges|}

% \renewcommand{\thesubsection}{\arabic{section}.\Alph{subsection}}
% %\usepackage{etoolbox}
% \titleformat{\section}[block]{\scshape\filcenter}{\thesection.}{1ex}{}
% \titleformat{\subsection}[block]{\bfseries\filcenter}{\thesubsection.}{1ex}{}
% \titleformat{\subsubsection}[runin]{\itshape}{\bfseries\upshape\thesubsubsection.}{1ex}{}[.---]

% \titlespacing*{\section}{0pt}{*3}{*1}
% \titlespacing*{\subsection}{0pt}{*3}{*1}
% \titlespacing*{\subsubsection}{0pt}{*1.5}{*0}

\titleformat{\section}[block]{\scshape\filcenter}{\thesection.}{1ex}{}
\titleformat{\subsection}[block]{\bfseries}{\thesubsection.}{1ex}{}
% \titleformat{\section}[block]{\Large\bfseries}{\thesection.}{1ex}{}
\titleformat{\subsection}[block]{\bfseries}{\thesubsection.}{1ex}{}
%\titleformat{\subsubsection}[runin]{\bfseries}{\bfseries\upshape\thesubsubsection.}{1ex}{}[.---]
\titleformat{\subsubsection}[runin]{\itshape}{\bfseries\upshape\thesubsubsection.}{1ex}{}[.---]

\titlespacing*{\section}{0pt}{*3}{*1}
\titlespacing*{\subsection}{0pt}{*3}{*1}
\titlespacing*{\subsubsection}{0pt}{*1.5}{*0}

\renewbibmacro{in:}{}
\setlength\bibitemsep{.5\baselineskip}

\renewbibmacro*{volume+number+eid}{%
	\printfield{volume}%
	%  \setunit*{\adddot}% DELETED
	\setunit*{\addnbspace}% NEW (optional); there's also \addnbthinspace
	\printfield{number}%
	\setunit{\addcomma\space}%
	\printfield{eid}}
%\DeclareFieldFormat[article]{number}{\mkbibparens{#1}}

\DeclareFieldFormat[article]{volume}{\textbf{#1}\space}
\DeclareFieldFormat[article]{number}{\mkbibparens{#1}}

\DeclareFieldFormat{journaltitle}{#1,}
\DeclareFieldFormat[thesis]{title}{\mkbibemph{#1}\addperiod}
\DeclareFieldFormat[article, unpublished, thesis]{title}{\mkbibemph{#1},}
\DeclareFieldFormat[book]{title}{\mkbibemph{#1}\addperiod}
\DeclareFieldFormat[unpublished]{howpublished}{#1, }

\DeclareFieldFormat{pages}{#1}

\DeclareFieldFormat[article]{series}{Ser.~#1\addcomma}

\setlength{\footskip}{1.5\baselineskip}

\setlist{topsep=4pt,itemsep=4pt}

\pagestyle{plain}

\begin{document}

%\vspace*{-10pt}

\maketitle

\sloppy

\begin{abstract}
    A graph $G$ is \textit{$k$-locally sparse} if for each vertex $v \in V(G)$, the subgraph induced by its neighborhood contains at most $k$ edges. Alon, Krivelevich, and Sudakov showed that for $f > 0$ if a graph $G$ of maximum degree $\Delta$ is $\Delta^2/f$-locally-sparse, then $\chi(G) = O\left(\Delta/\log f\right)$. We introduce a more general notion of local sparsity by defining graphs $G$ to be \textit{$(k, F)$-locally-sparse} for some graph $F$ if for each vertex $v \in V(G)$ the subgraph induced by the neighborhood of $v$ contains at most $k$ copies of $F$. Employing the R\"{o}dl nibble method, we prove the following generalization of the above result: for every bipartite graph $F$, if $G$ is $(k, F)$-locally-sparse, then $\chi(G) = O\left( \Delta /\log\left(\Delta k^{-1/|V(F)|}\right)\right)$. This improves upon results of Davies, Kang, Pirot, and Sereni who consider the case when $F$ is a path. Our results also recover the best known bound on $\chi(G)$ when $G$ is $K_{1, t, t}$-free for $t \geq 4$, and hold for list and correspondence coloring in the more general so-called ``color-degree'' setting.
\end{abstract}

\section{Introduction}\label{sec: intro}

All graphs considered are finite, undirected, and simple.
A \textit{coloring} of a graph $G = (V(G), E(G))$ is a function $\phi \,: \,V(G) \to \N$, and it is \textit{proper} if $\phi(x) \neq \phi(y)$ whenever $xy \in E(G)$. The \textit{chromatic number} of a graph $G$, denoted $\chi(G)$, is the smallest size of a set $C \subseteq \N$ such that $G$ has a proper coloring $\phi \,:\, V(G) \to C$. Determining $\chi(G)$ for various classes of graphs has been a central topic in graph theory. In this paper, we generalize and extend several results which bound $\chi(G)$ when $G$ satisfies various local sparsity conditions.

\subsection{Preliminaries}

For $k \in \R$, we say $G$ is \emphd{$k$-locally-sparse} if for each $v \in V(G)$, the subgraph induced by $N(v)$ contains at most $\lfloor k\rfloor$ edges (letting $k \in \R$ as opposed to $k \in \N$ permits us to state results more cleanly by allowing us to avoid taking floors). Molloy and Reed showed that if a graph $G$ is locally sparse, then $\chi(G)$ can be bounded from above as follows:

\begin{theo}[{\cite[Lemma 2]{molloy1997bound}}]\label{theo:MR}
    For every $\delta > 0$, there is $\eps > 0$ such that for $\Delta \in \N$ sufficiently large (in terms of $\delta$ and $\epsilon$), if $G$ is a $(1-\delta)\binom{\Delta}{2}$-locally-sparse graph of maximum degree $\Delta$, then $\chi(G) \leq (1- \eps)(\Delta + 1)$. 
\end{theo}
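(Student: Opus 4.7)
The plan is to follow Molloy--Reed's naive random coloring strategy, combining a random partial coloring with Talagrand concentration and the symmetric Lovász Local Lemma, and finishing with a list-coloring completion. Fix $\eps = \eps(\delta) > 0$ small and a palette $C$ of size $(1-\eps)(\Delta+1)$. Assign each vertex $v$ an independent uniform color $c_v \in C$ and uncolor every vertex whose color matches some neighbor's. Let $K$ be the set of kept vertices, let $A(v) \defeq C \setminus \{c_u : u \in N(v)\cap K\}$ be the surviving palette at $v$, and let $d_U(v)$ be the number of uncolored neighbors of $v$.

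The heart of the argument is to show $\E[|A(v)|] \geq (1+f(\delta))\,(|C| - \E[d_K(v)])$ for some $f(\delta) > 0$. A color $c \in C$ survives at $v$ if either no neighbor of $v$ picks $c$, or two \emph{non-adjacent} neighbors of $v$ both pick $c$ (so both are uncolored and $c$ remains available). The hypothesis that $N(v)$ contains at most $(1-\delta)\binom{\Delta}{2}$ edges guarantees at least $\delta\binom{\Delta}{2}$ non-edges in $N(v)$, each of which contributes to the probability of the second event; summing over colors, a short inclusion--exclusion calculation shows that these non-edges yield an extra $\Theta(\delta\,\Delta)$ expected saved colors at $v$ beyond the naive bound, giving a constant multiplicative gain.

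Next, Talagrand's inequality applied separately to $|A(v)|$ and $d_U(v)$ (both Lipschitz functions of the independent color choices, with bounded certificate sizes) gives concentration within $O(\Delta^{3/4})$ of their means. The bad event at $v$---that either quantity deviates from its mean by more than $\Delta^{3/4}$---depends only on the colors chosen in the radius-$2$ neighborhood of $v$, which has size $O(\Delta^2)$, so the symmetric LLL applies. We obtain an outcome in which every vertex $v$ simultaneously satisfies $|A(v)| \geq (1 + f(\delta)/2)\,d_U(v)$. The uncolored subgraph $G_U$ equipped with the lists $A(v)$ then has every list strictly larger than the local degree by a constant factor, and one extends the partial coloring to a proper coloring of $G$ using $|C|$ colors via a standard list-coloring / recursive argument (e.g.\ applying the same procedure to $G_U$, or invoking a suitable greedy completion). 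This yields $\chi(G) \leq (1-\eps)(\Delta+1)$.

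The main obstacle is the second step: translating ``many non-edges in $N(v)$'' into a genuinely \emph{multiplicative} gain $f(\delta)$ in the expected palette size, while correctly accounting for higher-order correlations among simultaneous color collisions. One must check that the gain does not shrink as $\Delta \to \infty$, which is where the assumption that $\Delta$ is large in terms of $\delta$ and $\eps$ is used. The concentration, LLL, and completion steps are essentially routine once the multiplicative gain is in hand.
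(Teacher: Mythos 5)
The paper does not actually prove Theorem~\ref{theo:MR}; it is quoted from Molloy and Reed, so your proposal can only be judged against the classical argument, and indeed your outline is that argument: one round of the naive random colouring, a gain coming from colours repeated on non-adjacent pairs of neighbours, Talagrand plus the Local Lemma, and a greedy completion. The architecture is right, but two of your steps are stated in a form that is false or would fail as written. First, the assertion that the sparsity hypothesis ``guarantees at least $\delta\binom{\Delta}{2}$ non-edges in $N(v)$'' is only true when $\deg(v)$ is close to $\Delta$: a vertex of degree, say, $\sqrt{1-\delta}\,\Delta$ may have a complete neighbourhood, and then there are no repeats and your claimed multiplicative bound $\E[|A(v)|]\geq(1+f(\delta))\bigl(|C|-\E[d_K(v)]\bigr)$ fails (it holds with equality, with no gain). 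You must split the vertices: for $\deg(v)\leq(1-2\eps)\Delta$ one has $|A(v)|\geq d_U(v)+\eps\Delta$ deterministically, so only near-maximum-degree vertices need the repeats argument; this is where the non-edge count is legitimate.

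Second, the concentration step is not ``routine'' in the form you state it. The variable $d_U(v)$ is not $O(1)$-Lipschitz in the independent colour choices: changing the colour of a single vertex $u$ can flip the retained/uncoloured status of up to $\Delta$ neighbours of $u$ lying in $N(v)$, so $d_U(v)$ can jump by $\Theta(\Delta)$. Moreover, the natural certificates for $|A(v)|$ (or for the number of retained colours on $N(v)$) require exposing entire neighbourhoods, so the certificate size is $\Theta(s\Delta)$ rather than $O(s)$, and Talagrand's inequality in its certifiable form does not apply directly. The standard repair, which is exactly the manoeuvre this paper describes in \S\ref{sec: iteration} for $|E_K(v)\cap E_U(v)|$, is to count \emph{colours} rather than pairs (so a single colour change affects at most the two colours involved) and to write the quantity you need as a difference of two random variables each of which admits small certificates (e.g.\ ``colours assigned to two non-adjacent neighbours'' minus ``such colours that are spoiled by a conflict''), applying Talagrand to each piece. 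With the degree split and this decomposition your expectation computation, the Local Lemma step with dependency radius $O(\Delta^2)$, and the greedy completion (for which the additive condition $|A(v)|\geq d_U(v)+1$ suffices; the multiplicative strengthening is unnecessary and fails when $d_U(v)=0$ unless you use the repeats there too) assemble into the Molloy--Reed proof.
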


Alon, Krivelevich, and Sudakov generalized this result, giving an asymptotic improvement when the parameter $f$ below is large (for example, when $f = \Theta(\Delta)$\footnote{Throughout this work, we use the standard asymptotic notation $O(\cdot)$, $\Omega(\cdot)$, $o(\cdot)$, etc.}).

\begin{theo}[{\cite[Theorem 1.1]{AKSConjecture}}]\label{theo:AKS}
    There exists a constant $C > 0$ such that for $\Delta \in \N$ sufficiently large and all $f > 1$, if $G$ is a $\frac{\Delta^2}{f}$-locally-sparse graph of maximum degree $\Delta$, then $\chi(G) \leq C\,\Delta/\log f$. 
\end{theo}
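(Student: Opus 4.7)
The plan is to prove Theorem~\ref{theo:AKS} via the Rödl nibble (semi-random method), in the spirit of Kim and Johansson. Assign every vertex $v$ an initial list $L_0(v)$ of $\ell_0 = \lceil C\Delta/\log f\rceil$ colors from a single palette, and maintain, throughout the iteration, a common lower bound $\ell_i$ on the surviving list sizes and a common upper bound $d_i$ on the \emph{color-degrees} $d_i(v,c) \coloneqq |\{u \in N(v) : c \in L_i(u)\}|$. Initially $d_0/\ell_0 \lesssim \log f$. The goal is to iterate a wasteful coloring step until $d_T/\ell_T$ is a small absolute constant and then finish with the Lovász Local Lemma, using colors from the original palette; since $|L_0(v)| = O(\Delta/\log f)$, this yields the claimed bound on $\chi(G)$.

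Each nibble round works as follows. Independently, every still-uncolored vertex $v$ activates with some small universal probability $p$; if activated it picks a uniformly random color $c(v) \in L_i(v)$ and keeps $c(v)$ only if no neighbor also activates with the same color. Successfully colored vertices are removed, and any color used on a colored neighbor of $v$ is deleted from $L_{i+1}(v)$. The heart of the argument is the one-round computation: for a fixed $v$ and $c \in L_i(v)$, the probability that $c$ survives equals $1$ minus the probability that some $u \in N(v)$ is successfully colored $c$. Expanded by inclusion--exclusion, the leading term gives $\Pr[c \in L_{i+1}(v)] \approx \exp(-p\,d_i/\ell_i)$, while the expected color-degree shrinks by a factor $\approx e^{-p}$; by themselves these two rates balance and do not improve $d_i/\ell_i$.

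Local sparsity is what breaks the balance. Any two neighbors $u_1,u_2 \in N(v)$ that are themselves adjacent cannot \emph{simultaneously} succeed at color $c$, so each edge of $G[N(v)]$ contributes a positive correction to $\Pr[c \in L_{i+1}(v)]$. Since $|E(G[N(v)])| \leq \Delta^2/f$, summing these corrections (while carefully bounding the higher-order inclusion--exclusion terms) yields an improvement in $\ell_{i+1}/\ell_i$ of order $\exp\bigl(\Omega(d_i^2/(f\,\ell_i^2))\bigr)$ relative to the naive estimate. Combined with the estimate for $d_{i+1}$, this produces a recursion that strictly drives $d_i/\ell_i$ toward a constant. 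Concentration of $|L_{i+1}(v)|$ and of the maximum $d_{i+1}(v,c)$ around these expectations follows from Talagrand's inequality applied to the activation and color choices in the $2$-neighborhood of $v$, with a union bound over all pairs $(v,c)$ controlling every vertex simultaneously.

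After $T$ rounds chosen so that $d_T/\ell_T$ is a sufficiently small absolute constant, the Lovász Local Lemma produces a proper list-coloring on the remaining vertices from the unused portion of the palette, completing the bound. The main obstacle is the one-round sparsity gain described in the previous paragraph: the correction of order $\Delta^2/f$ must be extracted precisely enough to dominate the Taylor-expansion error terms, and the slack lost to Talagrand concentration at each round must remain negligible compared to this gain throughout the iteration. Managing this trade-off by carefully calibrating $p$ and introducing an ``equalizing'' truncation step that keeps $\ell_i$ and $d_i$ uniform across vertices is the delicate heart of the proof.
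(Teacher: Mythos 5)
You should first note that the paper does not prove Theorem~\ref{theo:AKS} at all---it is quoted from \cite{AKSConjecture}---so the fair comparison is with the nibble machinery the paper builds for Theorem~\ref{theo:main_theo}, which is the same family of argument you propose (wasteful rounds, tracking list sizes and color-degrees, LLL finish as in Proposition~\ref{prop:final_blow}). Within that family, however, the engine of your sketch is misattributed and has the wrong sign. You claim the generic one-round rates ``balance'' and that the gain comes from a \emph{positive} correction to $\Pr[c\in L_{i+1}(v)]$ contributed by edges of $G[N(v)]$, because adjacent neighbours cannot both retain $c$. Mutual exclusivity works against you: for fixed marginals $p_1,p_2$, the probability that neither of two mutually exclusive events occurs is $1-p_1-p_2$, which is \emph{smaller} than the independent value $(1-p_1)(1-p_2)$; so each edge inside $N(v)$ makes removal of $c$ slightly \emph{more} likely, not less, and cannot be the source of the gain. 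In a correct nibble proof (Kim/Johansson, Theorem~\ref{theo:DKPSCorrespondence}, and Lemma~\ref{lemma:iteration} here) the structure is the opposite: the generic expected update already improves $d_i/\ell_i$ by the factor $\uncolor<1$, because successfully coloured neighbours leave the graph, and the sparsity hypothesis is needed to \emph{validate} that expected update and its concentration---specifically to control the positive correlation between ``$c$ survives at $v$'' and ``$c$ survives at a neighbour $u$,'' which is governed by $|N(u)\cap N(v)|$ and hence by $\sum_{u\in N(v)}|N(u)\cap N(v)|=2e(G[N(v)])\le 2\Delta^2/f$. That is exactly where local sparsity enters in the paper (Lemma~\ref{expectationKeptUncolor} via Claim~\ref{claim:bad}); your sketch never addresses this correlation, and without it the estimate for the expected new color-degree is simply false in dense neighbourhoods (the complete graph is the witness that the ``generic gain'' cannot be had for free). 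Quantitatively, your claimed gain $\exp\bigl(\Omega(d_i^2/(f\ell_i^2))\bigr)$ is also far too weak: with $d_i/\ell_i\approx\log f$ it is $1+O((\log f)^2/f)$ per round, which cannot drive $d_i/\ell_i$ from $\log f$ down to $O(1)$ in the number of rounds the lists survive; and $e(G[N(v)])$ is bounded by $\Delta^2/f$ in terms of the \emph{original} $\Delta$ and does not rescale to $d_i^2/f$ as the iteration proceeds---this non-rescaling is precisely the obstacle discussed in \S\ref{sec: proof overview} that forces the restricted range of $k$ in Theorem~\ref{theo:main_theo}.

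Two further gaps: first, ``a union bound over all pairs $(v,c)$'' is not available, since $n$ is unbounded in terms of $\Delta$ and the per-round failure probabilities are only $\exp(-\mathrm{poly}(\Delta))$; every correct version applies the Lov\'asz Local Lemma in \emph{each} round (as in the proof of Lemma~\ref{lemma:iteration}), not only at the end. Second, your scheme needs $\log f$ large for the initial ratio $d_0/\ell_0\approx \log f/C$ and the iteration to make sense, so the regime $1<f\le O(1)$ must be dispatched separately (it is trivial by greedy colouring once $C$ is chosen large, but this should be said). So the overall plan---R\"odl nibble with an equalizing/truncation step plus an LLL finish---is the right one and is how Davies--Kang--Pirot--Sereni and this paper's machinery proceed, but as written the step that is supposed to exploit $e(G[N(v)])\le\Delta^2/f$ both points at the wrong quantity and gets the direction of the correlation wrong, so the recursion you describe does not close.
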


In the same paper, they show the above is tight up to the value of $C$ \cite[Proposition 1.2]{AKSConjecture}.
Davies, Kang, Pirot, and Sereni proved Theorem~\ref{theo:AKS} holds with $C = 1 /2 + o(1)$, which remains the best known bound for this problem \cite[Theorem~5]{DKPS}; we note that this result holds in the more general setting of correspondence coloring (see \S\ref{sec: correspondence} for a description of this setting).

For graphs $F$ and $G$, a \emphd{copy} of $F$ in $G$ is a subgraph $H \subseteq G$ (not necessarily induced) which is isomorphic to $F$.
We say a graph $G$ is \emphdef{$F$-free} if $G$ contains no copies of $F$.
As a corollary to Theorem~\ref{theo:AKS}, Alon, Krivelevich, and Sudakov showed the following, where for $t \geq 1$, $K_{1, t, t}$ is the complete triparite graph with set sizes $1,\, t,\, t$:

\begin{corl}[\cite{AKSConjecture}]\label{corl: AKS}
    Let $t \geq 1$. Fix an arbitrary graph $F \subseteq K_{1, t, t}$. Then there exists a real value $c_F>0$ such that if $G$ is an $F$-free graph of maximum degree $\Delta$, then 
    \[\chi(G) \leq(c_F + o_{\Delta}(1))\frac{\Delta}{\log\Delta}.\]
\end{corl}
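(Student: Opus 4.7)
The plan is to derive Corollary \ref{corl: AKS} as a direct consequence of Theorem \ref{theo:AKS}, with the Kővári--Sós--Turán theorem supplying the missing extremal-graph-theoretic input. First I would make a simple reduction: since $F \subseteq K_{1,t,t}$, any graph containing $K_{1,t,t}$ as a subgraph automatically contains $F$ as a subgraph, so every $F$-free graph is also $K_{1,t,t}$-free. It therefore suffices to prove the bound under the assumption that $G$ is $K_{1,t,t}$-free, since any constant $c_F$ that works for $K_{1,t,t}$-free graphs works for $F$-free graphs too.

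The key structural observation is then that $G$ is $K_{1,t,t}$-free if and only if for every $v \in V(G)$ the induced neighborhood subgraph $G[N(v)]$ contains no copy of $K_{t,t}$: a $K_{t,t}$ inside $N(v)$ together with the apex $v$ would form a $K_{1,t,t}$, and conversely any $K_{1,t,t}$ in $G$ plants a $K_{t,t}$ in the neighborhood of its apex vertex. Since $|N(v)| \leq \Delta$, the Kővári--Sós--Turán theorem yields that $G[N(v)]$ has at most $C_t\, \Delta^{2 - 1/t}$ edges for some constant $C_t > 0$ depending only on $t$. In other words, $G$ is $k$-locally-sparse with $k \defeq C_t\, \Delta^{2 - 1/t}$.

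Setting $f \defeq \Delta^{2}/k = \Delta^{1/t}/C_t$, we see $f \to \infty$ as $\Delta \to \infty$, so Theorem \ref{theo:AKS} applies once $\Delta$ is sufficiently large in terms of $t$ and gives
\[
\chi(G) \,\leq\, \frac{C\,\Delta}{\log f} \,=\, \frac{C\,\Delta}{\tfrac{1}{t}\log\Delta - \log C_t} \,=\, (Ct + o_\Delta(1))\,\frac{\Delta}{\log\Delta},
\]
so one may take $c_F \defeq Ct$. The whole argument is short and essentially routine once its three ingredients are assembled; the only step requiring any thought is the neighborhood--apex correspondence between copies of $K_{1,t,t}$ in $G$ and copies of $K_{t,t}$ in the neighborhoods. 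There is no genuine obstacle here beyond recognizing that Theorem \ref{theo:AKS} was tailor-made for this type of reduction, which perhaps explains why Alon, Krivelevich, and Sudakov stated this as a corollary rather than as a separate theorem.
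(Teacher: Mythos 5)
Your proposal is correct and follows essentially the same route as the paper's own argument: reduce $F$-free to $K_{1,t,t}$-free, use the apex--neighborhood correspondence to conclude each $G[N(v)]$ is $K_{t,t}$-free, invoke K\H{o}v\'ari--S\'os--Tur\'an to get $k$-local-sparsity with $k = O_t\left(\Delta^{2-1/t}\right)$, and then apply Theorem \ref{theo:AKS} with $f = \Delta^{2}/k$ to obtain $c_F = O(t)$. The only point glossed over (in the paper's sketch as well) is that Theorem \ref{theo:KST} is stated for bipartite host graphs while $G[N(v)]$ is arbitrary, so one should cite the standard non-bipartite form of K\H{o}v\'ari--S\'os--Tur\'an, which gives the same $O_t\left(n^{2-1/t}\right)$ bound.
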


An outline of their proof goes as follows: if $F \subseteq K_{1, t, t}$ and $G$ is $F$-free, then the neighborhood of each vertex in $G$ does not contain $K_{t, t}$ as a subgraph. The celebrated \hyperref[theo:KST]{K\H{o}v\'ari--S\'os--Tur\'an Theorem} below then implies the neighborhood of each vertex contains relatively few edges:

\begin{theo}[K\H{o}v\'ari--Sós--Turán \cite{KST}; see also Hylt\'{e}n-Cavallius \cite{KST2}]\label{theo:KST}
    Let $G$ be a bipartite graph with a bipartition $V(G) = X \sqcup Y$, where $|X| = m$, $|Y| = n$, and $m \geq n$. Suppose that $G$ does not contain a complete bipartite subgraph with $s$ vertices in $X$ and $t$ vertices in $Y$.
    Then
    \[|E(G)| \leq s^{1/t} m^{1-1/t} n + tm.\]
\end{theo}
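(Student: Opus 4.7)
The plan is the classical double-counting argument. I would count the number of pairs $(u, T)$ with $u \in X$, $T \in \binom{Y}{t}$, and $T \subseteq N(u)$ — in other words, rooted copies of $K_{1,t}$ with root in $X$ and leaves in $Y$. Counted from the root, the number of such pairs equals $\sum_{u \in X} \binom{d(u)}{t}$, where $d(u)$ denotes the degree of $u$ in $G$. Counted from the leaf-set, for each fixed $T \in \binom{Y}{t}$ the number of $u \in X$ with $T \subseteq N(u)$ is at most $s-1$; otherwise $s$ such vertices together with $T$ would span a $K_{s,t}$ with $s$ vertices in $X$ and $t$ in $Y$, violating the hypothesis. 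This yields
\[
\sum_{u \in X} \binom{d(u)}{t} \;\leq\; (s-1)\binom{n}{t} \;<\; s\cdot \frac{n^{t}}{t!}.
\]

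The second step is to convert this into an upper bound on $|E(G)|$ via convexity. Writing $\bar d \defeq |E(G)|/m$ for the average degree on the $X$ side, if $\bar d < t$ then already $|E(G)| < tm$ and the conclusion is immediate. Otherwise Jensen's inequality, applied to the piecewise-convex hull of $\binom{\cdot}{t}$ in order to sidestep the fact that this polynomial is not globally convex on $\R_{\geq 0}$, together with the elementary estimate $\binom{x}{t} \geq (x-t+1)^{t}/t!$ valid for $x \geq t-1$, gives
\[
m \cdot \frac{(\bar d - t + 1)^{t}}{t!} \;\leq\; \sum_{u \in X} \binom{d(u)}{t}.
\]
Combining the two displays, taking $t$-th roots, and rearranging, one finds $\bar d < s^{1/t} n m^{-1/t} + t$; multiplying through by $m$ produces the desired bound $|E(G)| \leq s^{1/t} m^{1-1/t} n + tm$.

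The argument is essentially routine and I do not anticipate a serious obstacle. The only mild technicalities are (i) formally handling the regime of small average degree, which is precisely what the additive $tm$ term in the bound absorbs, and (ii) making the convexity step rigorous despite $\binom{x}{t}$ failing to be convex on $[0, t-1]$ for $t \geq 3$, which is handled by discarding vertices of degree less than $t$ (they contribute $0$ to the sum) before applying Jensen. The hypothesis $m \geq n$ is not actually used in the proof; it simply indicates the natural regime of interest, since in the bound $m$ appears with exponent $1 - 1/t < 1$ while $n$ appears linearly.
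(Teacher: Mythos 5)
Your argument is correct: it is the classical K\H{o}v\'ari--S\'os--Tur\'an double count of pairs $(u,T)$ with $T\in\binom{Y}{t}$, $T\subseteq N(u)$, followed by convexity, and your handling of the two technical points (the low-degree regime absorbed by the $tm$ term, and extending $\binom{x}{t}$ by $0$ below $t-1$ so that Jensen applies) is sound; you are also right that $m\geq n$ is never needed. The paper does not prove this theorem --- it is quoted with citations to \cite{KST} and \cite{KST2} --- so there is no in-paper argument to compare against; yours is the standard proof of the cited result.
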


Thus when $G$ is $F$-free, $G$ is $k$-locally-sparse for $k = \Theta_t\left(\Delta^{2 - 1/t}\right)$, allowing Theorem~\ref{theo:AKS} to be applied, completing the proof.
This result led Alon, Krivelevich, and Sudakov to conjecture that Corollary~\ref{corl: AKS} holds for all graphs $F$, not just $F \subseteq K_{1, t, t}$.

\begin{conj}[{\cite[Conjecture 3.1]{AKSConjecture}}]\label{conj:AKS}
        Fix an arbitrary graph $F$. Then there exists a real value $c_F>0$ such that if $G$ is  $F$-free and has maximum degree $\Delta$, then 
        \[\chi(G) \leq(c_F + o_{\Delta}(1))\frac{\Delta}{\log\Delta}.\]
\end{conj}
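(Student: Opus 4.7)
The plan is to reduce $F$-freeness to a local sparsity condition and then invoke Theorem~\ref{theo:AKS} (or its generalization to counting copies of bipartite graphs, which is the main result of this paper). The key structural observation is the following: for any graph $F$ and any vertex $v \in V(F)$, if $G$ is $F$-free then for every $u \in V(G)$ the subgraph induced on $N(u)$ is $(F-v)$-free. Indeed, any copy of $F - v$ inside $N(u)$ extends to a copy of $F$ in $G$ by sending $v \mapsto u$, since every vertex of $N(u)$ is automatically adjacent to $u$ and hence all $F$-edges incident to $v$ are realized in $G$.

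First I would reduce to the case where $F$ is connected (treat connected components separately). Then I would split into two cases based on a structural property of $F$. In the first (easier) case, assume $F$ has a vertex $v$ for which $F' := F - v$ is bipartite, with parts of sizes $s \leq t$. By the observation above, every neighborhood $N(u)$ is $F'$-free, and since $F' \subseteq K_{s,t}$ it is in particular $K_{s,t}$-free. Theorem~\ref{theo:KST} then yields $|E(N(u))| = O_{s,t}(\Delta^{2 - 1/s})$, so $G$ is $k$-locally-sparse for $k = O(\Delta^{2 - 1/s})$, i.e.\ with parameter $f := \Delta^2/k = \Omega(\Delta^{1/s})$. Theorem~\ref{theo:AKS} then delivers $\chi(G) = O\!\left(\Delta/\log\Delta\right)$ with a constant depending only on $F$.

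The second and harder case is when $F - v$ is non-bipartite for every $v \in V(F)$; the archetypal example is $F = K_r$ for $r \geq 4$. Here the K\H{o}v\'ari--S\'os--Tur\'an argument collapses, and I would instead pick $v$ minimizing some extremal parameter of $F - v$, and locate a bipartite subgraph $B \subseteq F - v$ whose number of copies inside each $N(u)$ can be bounded by a supersaturation/counting version of Theorem~\ref{theo:KST} applied to the fact that $N(u)$ is $(F-v)$-free. Once such a bound is in hand, the resulting $(k,B)$-local-sparsity estimate feeds into the main theorem of this paper and yields the desired chromatic bound.

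The main obstacle is precisely this last step. For $F = K_r$ no bipartite subgraph $B \subseteq F - v$ is ``tight'' enough for the required local sparsity to follow from elementary counting alone, and one seems forced to combine the locally-sparse semi-random framework of this paper with the entropy-compression machinery developed by Johansson, Molloy, and Bernshteyn for $K_r$-free graphs. Unifying these two paradigms into a single semi-random argument that covers every $F$ -- and, in particular, gracefully interpolates between the bipartite-transversal regime handled by the \LLL{} based nibble and the ``robustly non-bipartite'' regime handled by entropy compression -- is, in my view, the crux of Conjecture~\ref{conj:AKS} and likely requires substantially new ideas rather than a direct combination of the existing techniques.
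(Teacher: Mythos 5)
The statement you were asked about is not a theorem of this paper: it is Conjecture~\ref{conj:AKS}, quoted from Alon--Krivelevich--Sudakov, and the paper explicitly records it as open (see \S\ref{sec: history}); the paper's own results concern the weaker $(k,F)$-locally-sparse setting for bipartite $F$ and do not resolve it. So there is no proof in the paper to compare against, and your proposal, as you yourself concede in its final paragraph, does not constitute a proof either. Concretely: your structural observation that $F$-freeness of $G$ forces every neighborhood $G[N(u)]$ to be $(F-v)$-free is correct, and your ``easy case'' --- $F$ has a vertex $v$ with $F-v$ bipartite --- is exactly the class of almost-bipartite graphs $F \subseteq K_{1,s,t}$. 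In that case your argument (neighborhoods are $K_{s,t}$-free, apply Theorem~\ref{theo:KST} to get $k$-local-sparsity with $f = \Omega(\Delta^{1/s})$, then Theorem~\ref{theo:AKS}) is precisely the original Alon--Krivelevich--Sudakov proof of Corollary~\ref{corl: AKS}; it yields nothing beyond what is already known.

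The genuine gap is your second case, which is not a fringe case but the entire open content of the conjecture: it contains every $F$ that is not almost-bipartite, already $F = K_4$. There the supersaturation idea does not go through as stated: knowing only that $G[N(u)]$ is $(F-v)$-free for a non-bipartite $F-v$ gives no nontrivial upper bound on the number of edges, let alone on the number of copies of a bipartite $B \subseteq F-v$, inside $N(u)$ --- an $(F-v)$-free neighborhood can be a complete multipartite graph with $\Theta(\Delta^2)$ edges and $\Theta(\Delta^{|V(B)|})$ copies of $B$, which is exactly the regime $k = \Theta(\Delta^{|V(B)|})$ that Theorem~\ref{theo:main_theo} (and Corollary~\ref{corl:DP_chromatic_number}) cannot reach, as discussed in \S\ref{sec: proof overview}. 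For such $F$ the best known bound remains Johansson's $O\left(\Delta \log\log\Delta/\log\Delta\right)$, and your closing suggestion to merge the nibble framework with entropy-compression-type arguments is a research program, not a proof step. In short, your write-up is a correct account of the state of the art plus a speculative outline, but it does not prove the statement, and no proof of it currently exists.
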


This conjecture has spurred a large research effort; we summarize the progress toward it in \S\ref{sec: history}.
In this paper, we introduce a more general notion of local sparsity:

\begin{defn}\label{def:local_sparse}
    Let $F$ be a graph and let $k \in \R$. 
    A graph $G$ is \emphd{$(k,F)$-sparse} if $G$ contains at most $\lfloor k \rfloor$ copies of $F$ (not necessarily vertex-disjoint).
    A graph $G$ is \emphd{$(k,F)$-locally-sparse} if, for every $v \in V(G)$, the induced subgraph $G[N(v)]$ is $(k, F)$-sparse.
\end{defn}

As a graph $G$ is $k$-locally-sparse if and only if it is $(k, K_2)$-locally-sparse, this definition generalizes the usual notion of local sparsity. The case of $(k, P_t)$-locally-sparse graphs (where $P_t$ is the path on $t$ vertices) was investigated by Davies, Kang, Pirot, and Sereni.
While this result was not stated explicitly in their paper, they discuss the bound in section 4.

\begin{theo}[{\cite{davies2020algorithmic}}]\label{theo:DKPS}
    For every $t \geq 2,\, \eps > 0$, there exist $\Delta_0 \in \N$ and $C > 0$ such that whenever $\Delta \geq \Delta_0$, the following holds:
    Let $k \geq 1/2$ and let $G$ be a $(k, P_t)$-locally-sparse graph of maximum degree $\Delta$. If $k \leq \Delta^2/C$, then \[\chi(G) \leq (1+\eps)\frac{\Delta}{\log(\Delta/\sqrt{k})}.\]
\end{theo}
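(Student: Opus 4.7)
The plan is to adapt the R\"odl nibble (semi-random) method, following the proof strategy behind Theorem~\ref{theo:AKS}, so that it extracts its savings from the number of $P_t$ copies in each neighborhood rather than from the edge count alone. Set $\ell \defeq (1+\eps)\Delta/\log(\Delta/\sqrt{k})$ and assign each $v \in V(G)$ an arbitrary list $L(v)$ of $\ell$ colors; it then suffices to produce a proper $L$-coloring. We perform iterated rounds of \emph{wasteful partial coloring}: in each round every $v$ samples a color $c_v \in L(v)$ uniformly, together with an independent activation Bernoulli of parameter $p$ to be tuned; $v$ retains $c_v$ as its color iff it is activated and no $N(v)$-neighbor sampled $c_v$, and afterward each color successfully used by some neighbor of $v$ is removed from $L(v)$.

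The heart of the argument is a one-round ``keep-factor'' estimate. For every $v$ and $c \in L(v)$, writing $d_c(v) \defeq |\{u \in N(v) : c \in L(u)\}|$, we aim to show
\[
\P\big[c \text{ survives in } L(v)\big] \;\geq\; (1 - p/\ell)^{d_c(v)} \cdot (1 - \Xi_{v,c}),
\]
where $\Xi_{v,c}$ collects the inclusion--exclusion corrections from ordered tuples $(u_1, \ldots, u_r)$ of neighbors of $v$ jointly satisfying conflict events with color $c$. Each such tuple is weighted by a joint proposal probability and by its adjacency pattern inside $G[N(v)]$: tuples whose induced subgraph contains a copy of $P_t$ are bounded via $(k, P_t)$-local-sparsity by at most $k$ times a polynomial factor, whereas tuples that fail to span a $P_t$ have induced subgraph on few vertices or few edges and are absorbed into lower-order error. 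Assembling the pieces shows that both $|L(v)|$ and $d_c(v)$ shrink by essentially the same factor in expectation.

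Talagrand's inequality then gives concentration of $|L(v)|$ and of each $d_c(v)$ around their expectations, and a single application of the Lov\'asz Local Lemma furnishes an outcome in which these bounds hold simultaneously for all $v$ and $c$. Iterating the round $O(\log\Delta)$ times rescales $\ell$ and the color-degrees by the keep-factor each time, so that eventually $d_c(v) \ll \ell/\log^2\ell$ for every $v, c$; a final application of the Local Lemma then completes the $L$-coloring and hence the proper coloring of $G$ with at most $\ell$ colors.

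The main obstacle is the combinatorial content of the keep-factor estimate: deriving from the mere bound on $P_t$-copies in $G[N(v)]$ the correct $\sqrt{k}$ dependence inside the logarithm, rather than the naive $k^{1/t}$ that one would extract by treating a $P_t$ as a generic $t$-vertex configuration. This requires isolating, among all ordered tuples contributing to $\Xi_{v,c}$, the embeddings of $P_t$ that dominate the inclusion--exclusion correction, and showing that every other adjacency pattern either cancels or is swallowed by lower-order error. The emergence of $\sqrt{k}$ ultimately reflects the bipartiteness of $P_t$: the two color classes of the path contribute equivalent multiplicative factors to the conflict probability, so the geometric mean of their sizes is what appears in the final bound.
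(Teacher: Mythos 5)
There is a genuine gap, and it sits exactly where you place your ``main obstacle.'' The intended route (the one in \cite{davies2020algorithmic}, and the one this paper sketches via Fact~\ref{fact:edges}) is a reduction, not a new nibble analysis: since $G[N(v)]$ can be made $P_t$-free by deleting at most $k$ edges, Erd\H{o}s--Gallai gives $e(G[N(v)]) \leq k + \text{ex}(\Delta, P_t) \leq k + (t-2)\Delta/2$, so a $(k,P_t)$-locally-sparse graph is $\bigl(k + O_t(\Delta)\bigr)$-locally-sparse in the classical edge sense; one then quotes the edge-version sparse-coloring theorem with leading constant $1+\eps$ (the occupancy-method form of Theorem~\ref{theo:AKS}), and the hypotheses $1/2 \leq k \leq \Delta^2/C$ ensure the additive $O_t(\Delta)$ term is negligible inside the logarithm, yielding $(1+\eps)\Delta/\log(\Delta/\sqrt{k})$. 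In particular the $\sqrt{k}$ is simply the square root attached to an \emph{edge count} in that classical bound; it has nothing to do with the bipartition classes of $P_t$ or any geometric mean of their sizes, so the mechanism you propose for how $\sqrt{k}$ ``emerges'' is not correct. You also have the difficulty inverted: for $t \geq 3$ and $k \geq 1$ the exponent $k^{-1/t}$ you dismiss as ``naive'' gives a \emph{stronger} bound than $k^{-1/2}$ (that improvement is precisely the content of Theorem~\ref{theo: main generalization} and requires Theorem~\ref{theo:KST_multiple}); the $\sqrt{k}$ target of this statement is the weaker one and needs no delicate analysis of $P_t$-embeddings.

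Concretely, your keep-factor lemma as sketched would not go through. In the nibble, the error terms are governed by correlations between survival events of colors, and these are created by \emph{edges} inside $N(v)$ (common neighbors), not by $P_t$-copies: a neighborhood can contain $\Theta_t(\Delta)$ edges while containing no copy of $P_t$ at all (e.g.\ disjoint cliques on $t-1$ vertices), so bounding only the tuples ``whose induced subgraph contains a copy of $P_t$'' by $k$ and hoping the remaining adjacency patterns ``cancel or are swallowed by lower-order error'' leaves the dominant correlation terms uncontrolled. The missing ingredient is exactly the extremal bound $\text{ex}(\Delta, P_t) = O_t(\Delta)$ that converts the $P_t$-count hypothesis into an edge-count hypothesis; once you have that, the rest of your architecture (wasteful rounds, Talagrand, Lov\'asz Local Lemma, final finish when color-degrees are small relative to list sizes) is standard and can be delegated to the known edge-sparsity theorem rather than re-derived.
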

By taking $t =2$, this bound matches their earlier result \cite[Theorem 5]{DKPS} (which improves upon the constant factor of Theorem~\ref{theo:AKS}).
However, for constant $t > 2$, the neighborhood of a vertex of a graph with maximum degree $\Delta$ can contain $\Theta_t\left(\Delta^t\right)$ copies of $P_t$. 
As $\Delta^2 \ll \Delta^t$ for large $t$, there is wide room to improve the range of $k$ in Theorem~\ref{theo:DKPS}.

Our main theorem improves upon Theorem~\ref{theo:DKPS} in three ways: first, we consider all bipartite graphs $F$ rather than just paths $P_t$; second, we consider the range $k \leq \Delta^{|V(F)|/10}$, which is an improvement for $|V(F)| \geq 20$; finally, we provide an asymptotic improvement on the dependence of $\chi(G)$ on $k$. This improvement is stated below:

\begin{theo}\label{theo: main generalization}
    For every $\eps > 0$ and every bipartite graph $F$, the following holds for $\Delta$ large enough (in terms of $\epsilon$ and $F$).
    Let $1/2 \leq k \leq \Delta^{|V(F)|/10}$ and let $C$ be defined as follows:
    \[C \defeq \left\{\begin{array}{cc}
          4 + \eps, & \text{if } k \leq \Delta^{\eps|V(F)|/200}, \\
          8, & \text{otherwise.}
        \end{array}\right.\]
    Let $G$ be a $(k, F)$-locally-sparse graph of maximum degree $\Delta$.
    Then,
    \[\chi(G) \leq \frac{C\,\Delta}{\log\left(\Delta\,k^{-1/|V(F)|}\right)}.\] 
\end{theo}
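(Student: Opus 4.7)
The approach is to apply the R\"odl nibble / semi-random method, following the general framework behind Theorems~\ref{theo:AKS} and \ref{theo:DKPS}. A proper coloring with $q \defeq C\Delta/\log(\Delta k^{-1/|V(F)|})$ colors is built by iteratively applying a ``wasteful coloring'' step: each currently uncolored vertex samples a color uniformly from its current list, activates the choice with a small probability $p$, and keeps it only if no neighbor is activated with the same color. After each round, both the available-color lists and the ``effective degrees'' (for each vertex $v$ and color $c$, the number of uncolored neighbors of $v$ that still have $c$ in their list) shrink; the nibble succeeds as long as the ratio of list size to effective degree stays $\geq 1$ throughout, at which point the \LLL\ finishes the coloring on the residual graph.

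The principal per-round analytic task is to control, at each vertex $v$ and color $c$, the expected shrinkage of both $|L(v)|$ and the effective degree. The list-shrinkage estimate is standard and depends only on $|L(v)|$ and $\deg(v)$. The effective-degree estimate is where the $(k,F)$-local sparsity enters: I would expand the expected number of surviving neighbors of $v$ with color $c$ as a sum over ordered tuples of neighbors of $v$ that simultaneously adopt $c$, and relate these correlation sums to counts of structured vertex sets in $G[N(v)]$. The $(k,F)$-sparsity hypothesis caps the number of copies of $F$ in $G[N(v)]$ by $k$; a K\H{o}v\'ari--S\'os--Tur\'an-- or Sidorenko-style inequality then converts this cap into a sharp bound on the correlation sums, by exploiting the bipartition $V(F) = A \sqcup B$ and applying H\"older's inequality along the two sides.

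The main obstacle is to extract the exponent $1/|V(F)|$ on $k$ cleanly, as opposed to the $1/|E(F)|$ that a naive Sidorenko application would yield; the key is that one factorizes the keep-probability of a potential copy of $F$ along its bipartition before summing, so that the bound is governed by copies of $F$ counted as vertex sets rather than edge-sets. I expect the dichotomy between the constants $C = 4+\eps$ (for $k \leq \Delta^{\eps|V(F)|/200}$) and $C = 8$ (otherwise) to arise from whether one can reduce to Theorem~\ref{theo:AKS} essentially as a black box---after bounding $|E(G[N(v)])| = O(\Delta k^{1/|V(F)|})$ via the supersaturation inequality above---or whether a bespoke nibble analysis is needed that loses a constant factor but covers the full range $k \leq \Delta^{|V(F)|/10}$. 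Concentration of the key random variables via Talagrand's inequality together with the \LLL\ close each nibble round, and the upper bound $k \leq \Delta^{|V(F)|/10}$ is precisely the hypothesis needed to keep the list size polynomially larger than the effective degree throughout the iteration.
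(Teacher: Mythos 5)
Your overall skeleton --- iterated wasteful-coloring rounds in which local sparsity is used only in the expected degree-shrinkage estimate, Talagrand plus the \LLL{} per round, and an \LLL{} finish on the residual instance --- is indeed the architecture of the paper. However, the two places where you commit to specifics contain genuine errors. First, the intermediate bound $|E(G[N(v)])| = O\left(\Delta k^{1/|V(F)|}\right)$ that you want to extract from supersaturation is false: already for $k = 1/2$ and $F = K_{3,3}$ the hypothesis only says each neighborhood is $K_{3,3}$-free, and such a neighborhood can have $\Theta\left(\Delta^{5/3}\right)$ edges. The paper's route to the exponent $1/|V(F)|$ is different: one first observes that $(k,F)$-local sparsity implies $(k,K_{s,t})$-local sparsity with $s+t=|V(F)|$, then applies Alon's supersaturation bound (Theorem~\ref{theo:KST_multiple}), whose exponent on $k$ is $1/(st)$, not $1/|V(F)|$; and this edge bound is used \emph{locally inside the nibble}, on the neighborhood of a color, to cap the number of neighbours $c'$ of $c$ having at least $\deg(c)^{1-1/(5t)}$ common neighbours with $c$ (Claim~\ref{claim:bad}), which is what makes the keep events nearly independent in Lemma~\ref{expectationKeptUncolor}. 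The factor $k^{-1/|V(F)|}$ appears only in the final logarithm, through the choice of $\eta$ and $\ell_0$ and the constraint $k \le d_i^{(s+t)/5}$ maintained along the iteration --- not through a H\"older/Sidorenko factorization of correlation sums, and your sketch of that step targets an inequality that is simply not true.

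Second, your explanation of the $C=4+\eps$ versus $C=8$ dichotomy is not viable. Black-boxing Theorem~\ref{theo:AKS} cannot yield an explicit, $F$-independent constant: its constant is unspecified, and feeding it the K\H{o}v\'ari--S\'os--Tur\'an bound $e(G[N(v)]) = O\left(\Delta^{2-1/t}\right)$ gives only $\chi(G) = O\left(t\,\Delta/\log\Delta\right)$, losing a factor depending on $F$. In the paper both regimes run the very same iteration (Lemma~\ref{lemma:iteration}); the dichotomy is bookkeeping: each round needs $k \le d_i^{(s+t)/5}$ for the current degree parameter $d_i$, and the provable lower bound $\ell_i \ge d\left(dk^{-1/(s+t)}\right)^{-4/(C-7\eps/8)}$ (hence a lower bound on $d_i$ before the final round) improves as $C$ grows, so $C=4+\eps$ only survives when $k \le d^{\eps(s+t)/200}$ while $C=8$ covers $k \le d^{(s+t)/10}$. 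Relatedly, ``$k\le\Delta^{|V(F)|/10}$ is precisely the hypothesis needed'' is an overstatement --- the paper chooses $1/10$ and $C=8$ for computational convenience, the real obstruction being $k \ll d_i^{s+t}$ at every round. (The paper also proves the stronger correspondence/color-degree statement, Theorem~\ref{theo:main_theo}, and deduces the present theorem; that strengthening is optional for $\chi$, but your argument for the chromatic-number statement still needs the two corrected steps above.)
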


We note that Theorem~\ref{theo: main generalization} holds in the more general so-called ``color-degree'' setting for \textit{list coloring} and \textit{correspondence coloring}, which is new even for $F = K_2$; see Theorem~\ref{theo:main_theo}.

\subsection{List coloring, correspondence coloring, and the color-degree setting}\label{sec: correspondence}

Introduced independently by Vizing \cite{vizing1976coloring} and Erd\H{o}s, Rubin, and Taylor \cite{erdos1979choosability}, \textit{list coloring} is a generalization of graph coloring in which each vertex is assigned a color from its own predetermined list of colors. Formally, $L : V(G) \to 2^{\N}$ is a \textit{list assignment} for $G$, and an \textit{$L$-coloring} of $G$ is a proper coloring $\phi: V(G) \to \N$ such that $\phi(v) \in L(v)$ for each $v \in V(G)$. When $|L(v)| \geq q$ for each $v \in V(G)$, where $q \in \N$, we say $L$ is \textit{$q$-fold}. The \textit{list chromatic number} of $G$, denoted $\chi_{\ell}(G)$, is the smallest $q$ such that $G$ has an $L$-coloring for every $q$-fold list assignment $L$ for $G$.

It is often convenient to view list coloring from a different perspective. Given a graph $G$ and a list assignment $L$ for $G$, we create an auxiliary graph $H$ as follows: 
\begin{align*}
    V(H) &\defeq \{(v, c) \in V(G) \times \N : c \in L(v)\} \\
    E(H) &\defeq \{\{(v, c), (u, d)\} : vu \in E(G),\, c = d\}\}.
\end{align*}
We call $H$ a \textit{cover graph} of $G$, and the pair $(L, H)$ a \textit{list cover} of $G$. An $L$-coloring of $G$ is then an independent set $I$ in $H$ which satisfies $|I| = |V(G)|$, and, for each vertex $v \in V(G)$, there exists $c \in \N$ such that $(v, c) \in I$. Thus $I$ selects exactly one vertex of the form $(v,c)$ for each $v \in V(G)$.

\emphd{Correspondence coloring} (also known as \emphd{DP-coloring}) is a generalization of list coloring introduced by Dvo\v{r}\'ak and Postle \cite{DPCol} in order to solve a question of 
Borodin. 
Just as in list coloring, each vertex is assigned a list of colors, $L(v)$;
in contrast to list coloring, though, the identifications between the colors in the lists are allowed to vary from edge to edge.
That is, each edge $uv \in E(G)$ is assigned a matching $M_{uv}$ (not necessarily perfect and possibly empty) from $L(u)$ to $L(v)$.
A proper correspondence coloring is a mapping $\phi : V(G) \to \N$ satisfying $\phi(v) \in L(v)$ for each $v \in V(G)$ and $\phi(u)\phi(v) \notin M_{uv}$ for each $uv \in E(G)$.
Formally, correspondence colorings are defined in terms of an auxiliary graph known as a \emphd{correspondence cover} of $G$.

\begin{defn}[Correspondence Cover]\label{def:corr_cov}
    A \emphd{correspondence cover} of a graph $G$ is a pair $\mathcal{H} = (L, H)$, where $H$ is a graph and $L \,:\,V(G) \to 2^{V(H)}$ such that:
    \begin{enumerate}[label= \ep{\normalfont CC\arabic*}, leftmargin = \leftmargin + 1\parindent]
        \item The set $\set{L(v)\,:\, v\in V(G)}$ forms a partition of $V(H)$,
        \item\label{dp:list_independent} For each $v \in V(G)$, $L(v)$ is an independent set in $H$, and
        \item\label{dp:matching} For each $u, v\in V(G)$, the edge set of $H[L(u) \cup L(v)]$ forms a matching, which is empty if $uv \notin E(G)$.
    \end{enumerate}
\end{defn}
We call the vertices of $H$ \emphd{colors}.
For $c \in V(H)$, we let $L^{-1}(c)$ denote the \emphd{underlying vertex} of $c$ in $G$, i.e., the unique vertex $v \in V(G)$ such that $c \in L(v)$. If two colors $c$, $c' \in V(H)$ are adjacent in $H$, we say that they \emphd{correspond} to each other and write $c \sim c'$.

An \emphd{$\mathcal{H}$-coloring} is a mapping $\phi \colon V(G) \to V(H)$ such that $\phi(v) \in L(v)$ for all $v \in V(G)$. Similarly, a \emphd{partial $\mathcal{H}$-coloring} is a partial mapping $\phi \colon V(G) \pto V(H)$ such that $\phi(v) \in L(v)$ whenever $\phi(v)$ is defined. A \ep{partial} $\mathcal{H}$-coloring $\phi$ is \emphd{proper} if the image of $\phi$ is an independent set in $H$, i.e., if $\phi(u) \not \sim \phi(v)$
for all $u$, $v \in V(G)$ such that $\phi(u)$ and $\phi(v)$ are both defined. Notice, then, that the image of a proper $\mathcal{H}$-coloring of $G$ is exactly an independent set $I \subseteq V(H)$ with $|I \cap L(v)| = 1$ for each $v \in V(G)$.

A correspondence cover $\mathcal{H} = (L,H)$ is \emphdef{$q$-fold} if $|L(v)| \geq q$ for all $v \in V(G)$. The \emphdef{correspondence chromatic number} of $G$, denoted by $\chi_{c}(G)$, is the smallest $q$ such that $G$ admits a proper $\mathcal{H}$-coloring for every $q$-fold correspondence cover $\mathcal{H}$.

Note that a list cover $(L, H)$ of $G$ is a correspondence cover of $G$, where each matching $M_{uv}$ is such that $(u,c)$ matches with $(v,c)$ for each $c \in \N$. As classical coloring is the special case of list coloring in which all lists are identical, it follows that $\chi(G) \leq \chi_{\ell}(G) \leq \chi_{c}(G)$.

When considering locally sparse graphs, a correspondence coloring version of Theorem~\ref{theo:DKPS} was proven by
Davies, Kang, Pirot, and Sereni, albeit for a smaller range of $k$.
Once again, while they do not explicitly state this result, they discuss it in section 4 of \cite{davies2020algorithmic}. 

\begin{theo}[Correspondence Coloring version of Theorem~\ref{theo:DKPS}, {\cite{davies2020algorithmic}}]\label{theo:DKPSCorrespondence}
    For every $t \geq 2,\, \eps > 0$, there exists $\Delta_0 \in \N$ such that whenever $\Delta > \Delta_0$, the following holds:
    let $k \geq 1/2$ and let $G$ be a $(k, P_t)$-locally-sparse graph of maximum degree $\Delta$. If $k \leq \Delta^2/(\log \Delta)^{2/\eps}$, then \[\chi_c(G) \leq (1+\eps)\frac{\Delta}{\log(\Delta/\sqrt{k})}.\]
\end{theo}

A curious feature of correspondence coloring is that structural constraints can be placed on the cover graph $H$ instead of on the underlying graph $G$.
For instance, if $\mathcal{H} = (L, H)$ is a correspondence cover of a graph $G$, then $\Delta(H) \leq \Delta(G)$, so an upper bound on $\Delta(H)$ is a weaker assumption than the same upper bound on $\Delta(G)$, and there exist a number of results in list and correspondence coloring in which the number of available colors given to each vertex is a function of $\Delta(H)$ as opposed to $\Delta(G)$. This framework, often referred to as the \emphdef{color-degree setting}, was pioneered by Kahn \cite{KahnListEdge}, Kim \cite{Kim95}, Johansson \cite{Joh_triangle,Joh_sparse}, and Reed \cite{Reed}, among others. For a selection of a few more recent examples, see \cite{BohmanHolzman, ReedSud, LohSudakov, alon2020palette, cambie2022independent, KangKelly, GlockSudakov, anderson2022coloring, anderson2023colouring} (see, also, \S\ref{subsection: color degree}).
Similarly, as a result of condition \ref{dp:matching}, if $G$ is $(k, F)$-locally-sparse, then so is $H$. However, the same does not hold for $(k, F)$-sparsity. For example, consider a $q$-fold list cover where all the lists are the same. Then for any graph $F$, each copy of $F$ in $G$ corresponds to at least $q$ copies of $F$ in $H$. Thus $G$ may contain just one copy of $F$, while $H$ contains many. 
Even for $F$-free graphs $G$, there can exist a cover $H$ of $G$ which contains $F$. For instance, in Fig.~\ref{fig:not free}, the graph $G$ is $C_6$-free, though its cover contains a $C_6$.
Nevertheless, appropriate conditions exist that ensure whenever $G$ is $F$-free, then so is any cover $H$ of $G$.
We summarize our observations in the following proposition (see \S\ref{sec:appendix} for the proof):

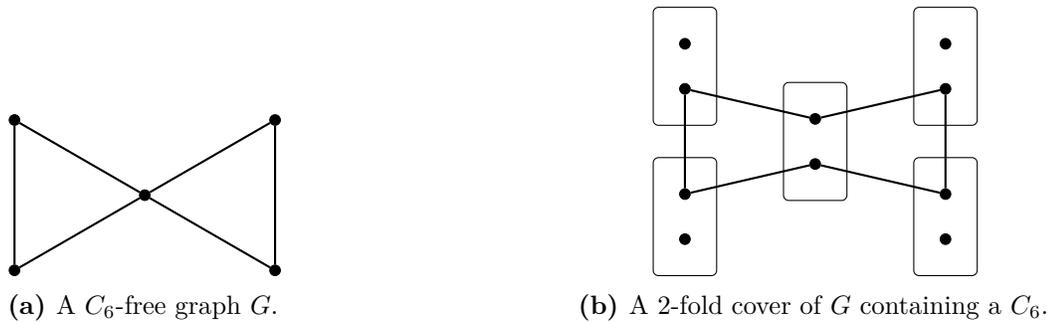
\begin{figure}[htb!]
	\centering
	\begin{subfigure}[t]{.4\textwidth}
		\centering
		\begin{tikzpicture}[scale=2]
		\node[circle,fill=black,draw,inner sep=0pt,minimum size=4pt] (a) at (0,0) {};
		\path (a) ++(90:1) node[circle,fill=black,draw,inner sep=0pt,minimum size=4pt] (b) {};
            \path (a) ++(30:1) node[circle,fill=black,draw,inner sep=0pt,minimum size=4pt] (c) {};
            \path (c) ++(30:1) node[circle,fill=black,draw,inner sep=0pt,minimum size=4pt] (d) {};
            \path (d) ++(-90:1) node[circle,fill=black,draw,inner sep=0pt,minimum size=4pt] (e) {};

            \draw[thick] (c) -- (a) -- (b) -- (c) -- (d) -- (e) -- (c);
		
		\end{tikzpicture}
		\caption{A $C_6$-free graph $G$.}\label{fig:not free:graph}
	\end{subfigure}%
	\qquad\qquad\qquad%
	\begin{subfigure}[t]{.4\textwidth}
		\centering
		\begin{tikzpicture}[scale=2]
		\node[circle,fill=black,draw,inner sep=0pt,minimum size=4pt] (a) at (0,0) {};
		\path (a) ++(90:1) node[circle,fill=black,draw,inner sep=0pt,minimum size=4pt] (b) {};
            \path (a) ++(30:1) node[circle,fill=black,draw,inner sep=0pt,minimum size=4pt] (c) {};
            \path (c) ++(30:1) node[circle,fill=black,draw,inner sep=0pt,minimum size=4pt] (d) {};
            \path (d) ++(-90:1) node[circle,fill=black,draw,inner sep=0pt,minimum size=4pt] (e) {};

            \path (a) ++(90:0.3) node[circle,fill=black,draw,inner sep=0pt,minimum size=4pt] (a1) {};
            \path (b) ++(90:0.3) node[circle,fill=black,draw,inner sep=0pt,minimum size=4pt] (b1) {};
            \path (c) ++(90:0.3) node[circle,fill=black,draw,inner sep=0pt,minimum size=4pt] (c1) {};
            \path (d) ++(90:0.3) node[circle,fill=black,draw,inner sep=0pt,minimum size=4pt] (d1) {};
            \path (e) ++(90:0.3) node[circle,fill=black,draw,inner sep=0pt,minimum size=4pt] (e1) {};

            \draw[rounded corners=2pt, black] ([xshift=-6pt, yshift=8pt]a1.south) -- ([xshift=-6pt, yshift=-8pt]a.north) -- ([xshift=6pt, yshift=-8pt]a.north) -- ([xshift=6pt, yshift=8pt]a1.south) -- cycle;

            \draw[rounded corners=2pt, black] ([xshift=-6pt, yshift=8pt]b1.south) -- ([xshift=-6pt, yshift=-8pt]b.north) -- ([xshift=6pt, yshift=-8pt]b.north) -- ([xshift=6pt, yshift=8pt]b1.south) -- cycle;

            \draw[rounded corners=2pt, black] ([xshift=-6pt, yshift=8pt]c1.south) -- ([xshift=-6pt, yshift=-8pt]c.north) -- ([xshift=6pt, yshift=-8pt]c.north) -- ([xshift=6pt, yshift=8pt]c1.south) -- cycle;

            \draw[rounded corners=2pt, black] ([xshift=-6pt, yshift=8pt]d1.south) -- ([xshift=-6pt, yshift=-8pt]d.north) -- ([xshift=6pt, yshift=-8pt]d.north) -- ([xshift=6pt, yshift=8pt]d1.south) -- cycle;

            \draw[rounded corners=2pt, black] ([xshift=-6pt, yshift=8pt]e1.south) -- ([xshift=-6pt, yshift=-8pt]e.north) -- ([xshift=6pt, yshift=-8pt]e.north) -- ([xshift=6pt, yshift=8pt]e1.south) -- cycle;

            \draw[thick] (a1) -- (c) -- (e1) -- (d) -- (c1) -- (b) -- (a1);

		\end{tikzpicture}
		\caption{A $2$-fold cover of $G$ containing a $C_6$.}\label{fig:not free:cover}
	\end{subfigure}%
	\caption{A $C_6$-free graph with a $2$-fold cover containing a $C_6$.}\label{fig:not free}
\end{figure}

\begin{prop}\label{prop: G F free implies H F free}
    Let $F$ and $G$ be graphs and let $\mathcal{H} = (L, H)$ be a correspondence cover of $G$.
    For $k \in \R$, the following holds:
    \begin{enumerate}[label=\ep{\normalfont S\arabic*}]
        \item\label{item:local} If $G$ is $(k, F)$-locally-sparse, then so is $H$. 
        
        \item\label{item:global} If $F$ satisfies
        \[\forall\, u, v \in V(F),\quad uv \notin E(F) \implies N_F(u) \cap N_F(v) \neq \0,\]
        then if $G$ is $F$-free, $H$ is as well.
    \end{enumerate}
\end{prop}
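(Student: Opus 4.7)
My plan is to handle both parts using the projection map $\pi \,:\, V(H) \to V(G)$ defined by $\pi(c) \defeq L^{-1}(c)$. The observation driving both proofs is that whenever $\pi$ is injective on a subset $S \subseteq V(H)$, axioms \ref{dp:list_independent} and \ref{dp:matching} force $\pi|_S$ to be a subgraph embedding from $H[S]$ into $G[\pi(S)]$: every edge of $H$ projects to an edge of $G$ between distinct vertices, and injectivity ensures that distinct subgraphs of $H[S]$ project to distinct subgraphs of $G[\pi(S)]$. Consequently, the number of copies of $F$ in $H[S]$ is bounded by the number of copies of $F$ in $G[\pi(S)]$.

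For \ref{item:local}, I would fix $c \in V(H)$ with $v \defeq \pi(c)$ and verify that $\pi$ is injective on $N_H(c)$. If two neighbors $c_1, c_2 \in N_H(c)$ satisfied $\pi(c_1) = \pi(c_2) = u$, then either $u = v$, contradicting independence of $L(v)$ via \ref{dp:list_independent}, or $u \neq v$, in which case the vertex $c$ would have two neighbors in $L(u)$, contradicting the matching condition \ref{dp:matching}. Combined with the inclusion $N_H(c) \subseteq \bigcup_{u \in N_G(v)} L(u)$, this embeds $H[N_H(c)]$ into $G[N_G(v)]$, so the $(k, F)$-sparsity bound on $G[N_G(v)]$ transfers to $H[N_H(c)]$.

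For \ref{item:global}, I would argue by contradiction: suppose $H$ contains a copy $F' \subseteq H$ with isomorphism $\sigma \,:\, V(F) \to V(F')$, and aim to show $\pi$ is injective on $V(F')$; this would produce a copy of $F$ in $G$ via $\pi \circ \sigma$, contradicting $F$-freeness of $G$. Suppose instead that $\pi(\sigma(x)) = \pi(\sigma(y))$ for distinct $x, y \in V(F)$, and write $v$ for this common projection. Since $\sigma(x)$ and $\sigma(y)$ both lie in $L(v)$, axiom \ref{dp:list_independent} forces $\sigma(x)\sigma(y) \notin E(H)$, hence $xy \notin E(F)$. The hypothesis on $F$ then supplies a common neighbor $z \in N_F(x) \cap N_F(y)$, so that $\sigma(z)$ is adjacent in $H$ to both $\sigma(x)$ and $\sigma(y)$. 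I would split into cases: if $\pi(\sigma(z)) \neq v$, then $\sigma(z)$ has two neighbors in $L(v)$, contradicting \ref{dp:matching}; if $\pi(\sigma(z)) = v$, then $\sigma(z)\sigma(x) \in E(H)$ with both endpoints in $L(v)$, contradicting \ref{dp:list_independent}.

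The only delicate step is this case split in \ref{item:global}: the hypothesis on $F$ only produces a common neighbor in $F$, with no control on its underlying vertex in $G$, so one must explicitly handle both possibilities for $\pi(\sigma(z))$ and invoke a different cover axiom in each. Everything else is a routine unpacking of Definition \ref{def:corr_cov}.
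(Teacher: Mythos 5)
Your proposal is correct and follows essentially the same route as the paper: project copies of $F$ in $H$ to $G$ via $L^{-1}$, use \ref{dp:list_independent} and \ref{dp:matching} to get injectivity on $N_H(c)$ (for \ref{item:local}) or on the copy of $F$ (for \ref{item:global}), and note that distinct copies have distinct images. Your explicit case split on the projection of the common neighbor in \ref{item:global} is in fact slightly more careful than the paper's one-line appeal to \ref{dp:matching}, and it also renders the paper's separate treatment of the clique case unnecessary, but these are cosmetic differences.
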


For any graph $F$, there is a complete $\chi(F)$-partite graph $F'$ such that $F\subseteq F'$ and $|V(F)| = |V(F')|$.
If $G$ is $F$-free, then it is $F'$-free as well.
Note that any complete $r$-partite graph $F$ for $r \geq 2$ satisfies the conditions of \ref{item:global}.
Therefore, we obtain the following corollary:

\begin{corl}\label{corl:F-free}
    Let $G$ be an $F$-free graph for some $F$ satisfying $\chi(F) = q \geq 2$ and let $\phi$ be a proper $q$-coloring of $F$.
    Define $F'$ to be the complete $q$-partite graph with partitions $V_1, \ldots, V_q$ satisfying $|V_i| = |\phi^{-1}(i)|$.
    For any correspondence cover $\mathcal{H} = (L, H)$ of $G$, the cover graph $H$ is $F'$-free.
\end{corl}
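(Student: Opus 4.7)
The plan is to reduce the corollary directly to Proposition~\ref{prop: G F free implies H F free}\ref{item:global} applied to $F'$ in place of $F$. To do so, I need to verify two things: that $G$ is $F'$-free, and that $F'$ itself satisfies the non-adjacency/common-neighbor hypothesis of \ref{item:global}.

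For the first point, I would observe that every edge of $F$ connects vertices in distinct color classes of the proper coloring $\phi$, so $F$ is a subgraph of the complete $q$-partite graph on the parts $V_i \defeq \phi^{-1}(i)$, which is exactly $F'$; in particular $|V(F)| = |V(F')|$. Any copy of $F'$ in $G$ would then contain a copy of $F$ as a subgraph, so the assumption that $G$ is $F$-free forces $G$ to be $F'$-free as well.

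For the second point, I would check directly that $F'$ satisfies the hypothesis of \ref{item:global}. Take any two non-adjacent vertices $u,v \in V(F')$. Since $F'$ is the complete $q$-partite graph with parts $V_1,\ldots,V_q$, non-adjacency means that $u$ and $v$ lie in a common part $V_i$. Because $q \geq 2$, there is some other part $V_j$ with $j \neq i$, and the vertices of $V_j$ are all adjacent in $F'$ to both $u$ and $v$; in particular $N_{F'}(u) \cap N_{F'}(v) \supseteq V_j \neq \emptyset$.

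Applying Proposition~\ref{prop: G F free implies H F free}\ref{item:global} with $F'$ in place of $F$ then yields that any correspondence cover $\mathcal{H} = (L,H)$ of the $F'$-free graph $G$ must have $H$ itself $F'$-free, as claimed. There is no essential obstacle here — the only subtle point is ensuring the hypothesis $q\geq 2$ is used precisely to produce the auxiliary part $V_j$ that certifies the common-neighbor condition.
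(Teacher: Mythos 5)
Your proposal is correct and is essentially the paper's own argument: the remarks immediately preceding the corollary note that $F \subseteq F'$ with $|V(F)| = |V(F')|$ (so $F$-freeness of $G$ gives $F'$-freeness) and that any complete $r$-partite graph with $r \geq 2$ satisfies the hypothesis of \ref{item:global}, after which \ref{item:global} is applied exactly as you do. The only point worth making explicit is that every part $V_i$ is nonempty because $\chi(F) = q$ forces any proper $q$-coloring to use all $q$ colors, which is what guarantees the auxiliary part $V_j$ you use actually contains a common neighbor.
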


\subsection{Main results}\label{subsec:main_results}

We are ready to present our main results.
We begin with a color-degree version of Theorem~\ref{theo: main generalization} for correspondence coloring.

\begin{theo}\label{theo:main_theo}
    There exists a constant $\alpha > 0$ such that for every $\epsilon > 0$, there is $d^* \in \N$ such that the following holds. 
    Suppose that $d$, $s$, $t \in \N$, and $k \in \R$ satisfy
    \[
        d \geq d^*,\quad 1 \leq t \leq s, \quad st \leq \frac{\alpha\,\epsilon\,\log d}{\log \log d}, \quad \text{and} \quad 1/2 \leq k \leq d^{(s+t)/10}.
    \]
    Define $C$ as follows:
    \[C \defeq \left\{\begin{array}{cc}
          4 + \eps &  \text{if } k \leq d^{\eps(s+t)/200}, \\
          8 & \text{otherwise.}
        \end{array}\right.\]
    If $G$ is a graph and $\mathcal{H} = (L,H)$ is a correspondence cover of $G$ such that:
    \begin{enumerate}[label=\ep{\normalfont\roman*}]
        \item $H$ is $(k, K_{s,t})$-locally-sparse,
        \item $\Delta(H) \leq d$, and
        \item $|L(v)| \geq C\,d/\log \left(dk^{-1/(s+t)}\right)$ for all $v \in V(G)$,
    \end{enumerate}
    then $G$ admits a proper $\mathcal{H}$-coloring.
\end{theo}

We note that both the local sparsity and the degree constraints are on the cover graph $H$ as opposed to $G$, which is a weaker assumption as a result of Proposition~\ref{prop: G F free implies H F free}.
This is new even for $s = t = 1$.\footnote{This special case was further investigated in follow-up work of the second named author of this manuscript with a focus on algorithmic implications \cite{dhawan2024palette}.}
We also remark that the upper bound $k \leq d^{(s+t)/10}$ can be relaxed slightly while increasing the value $C$, however, our approach does not work for $k = \Theta(d^{s+t})$.
We discuss this further in \S\ref{sec: proof overview}.

Note that for any bipartite graph $F$, a $(k, F)$-locally-sparse graph is $(k, K_{s, t})$-locally-sparse for appropriate $s, t \in \N$ satisfying $s+t = |V(F)|$.
Therefore, by treating $s$ and $t$ as constants we immediately obtain the following corollary:

\begin{corl}\label{corl:bipartite}
    For every $\eps > 0$ and bipartite graph $F$, the following holds for $d$ large enough.
    Let $1/2 \leq k \leq d^{|V(F)|/10}$ and let $C$ be defined as follows:
    \[C \defeq \left\{\begin{array}{cc}
          4 + \eps &  \text{if } k \leq d^{\eps|V(F)|/200}, \\
          8 & \text{otherwise.}
        \end{array}\right.\]
    Let $G$ be a graph and $\mathcal{H} = (L, H)$ be a correspondence cover of $G$ such that:
    \begin{enumerate}
        \item $H$ is $(k, F)$-locally-sparse, 
        \item $\Delta(H) \leq d$, and
        \item $|L(v)| \geq C\,d/\log\left(dk^{-1/|V(F)|}\right)$ for all $v \in V(G)$,
    \end{enumerate}
    then $G$ admits a proper $\mathcal{H}$-coloring.
\end{corl}

% Setting $k < 1$ recovers the result of Theorem~\ref{theo:our_old_result}, and s
Setting $d = \Delta(G)$ together with Proposition~\ref{prop: G F free implies H F free} yields the following corollary:

\begin{corl}\label{corl:DP_chromatic_number}
    For every $\eps > 0$ and bipartite graph $F$, the following holds for $\Delta$ large enough.
    Let $1/2 \leq k \leq \Delta^{|V(F)|/10}$ and let $C$ be defined as follows:
    \[C \defeq \left\{\begin{array}{cc}
          4 + \eps, &   \text{if }k \leq \Delta^{\eps|V(F)|/200}, \\
          8, & \text{otherwise.}
        \end{array}\right.\]
    Let $G$ be a $(k, F)$-locally-sparse graph of maximum degree $\Delta$.
    Then,
    \[\chi_c(G) \leq \frac{C\,\Delta}{\log\left(\Delta k^{-1/|V(F)|}\right)}.\]
\end{corl}

As $\chi(G) \leq \chi_c(G)$, this clearly implies Theorem~\ref{theo: main generalization}, and, for $F = P_2$, recovers Theorem~\ref{theo:DKPSCorrespondence} up to a constant (albeit with a stricter bound on $k$). More importantly, for $t \geq 3$, our results asymptotically improve the bound in Theorem~\ref{theo:DKPSCorrespondence} (i.e., $k^{-1/t}$ as opposed to $k^{-1/2}$). Observe as well that for $t\geq 20$, our results hold for an asymptotically larger range of $k$ than Theorem~\ref{theo:DKPSCorrespondence}.

We conclude this section with a summary of the results contained in this manuscript. Primarily, we introduce a generalized notion of local sparsity, allowing us to rephrase the results of Alon, Krivelevich, and Sudakov \cite{AKSConjecture} and Davies, Kang, Pirot, and Sereni \cite{DKPS} as theorems about $(k, F)$-locally-sparse graphs for when $F$ is an edge or a path, respectively. Our theorem considers arbitrary bipartite graphs $F$, thereby recovering and improving upon both of these results. In addition, we consider the more general setting of placing sparsity and degree constraints on the cover graph $H$ rather than on the underlying graph $G$. This is the first color-degree version of Theorem~\ref{theo:AKS}, even for list coloring. We also recover a result of \cite{anderson2022coloring} (which we discuss further in subsequent sections; see Theorem~\ref{theo:our_old_result}), which to date is the best known bound on $\chi(G)$ for the largest class of graphs known to satisfy Conjecture~\ref{conj:AKS}.

\subsubsection*{Structure of the paper}

The rest of the paper is structured as follows.
In \S\ref{section: relation to prior work}, we discuss related works in the graph coloring literature to better place our results in context.
In \S\ref{sec: proof overview}, we provide an informal overview of our proof techniques.
In \S\ref{sec:wcp}, we will formally describe a coloring procedure we employ in our proof and prove a key lemma regarding its output.
In \S\ref{sec:main_proof}, we apply this key lemma iteratively to prove Theorem~\ref{theo:main_theo}.
Finally, in \S\ref{subsec:open_probs}, we discuss potential avenues for future research.

\section{Discussion of previous work}\label{section: relation to prior work}

In this section, we discuss related works in the graph coloring literature.
Specifically, we survey results related to Conjecture~\ref{conj:AKS} and the color-degree setting, and compare our proof techniques to other approaches in the area.

\subsection{History of the Alon--Krivelevich--Sudakov conjecture
}\label{sec: history}

A trivial upper bound on $\chi(G)$ comes from a greedy coloring, which shows $\chi(G) \leq \Delta(G) + 1$, where $\Delta(G)$ is the maximum degree of $G$. Brooks improved this bound to $\chi(G) \leq \Delta(G)$ when $\Delta(G) \geq 3$ and $G$ contains no cliques of size $\Delta(G) + 1$ \cite{brooks1941colouring}.
Reed improved upon this, showing that $\chi(G) \leq \Delta(G) - 1$ for sufficiently large $\Delta(G)$ when $G$ contains no cliques of size $\Delta(G)$ \cite{reed1999strengthening}. Conjecture~\ref{conj:AKS} is a natural extension of these results, i.e., can the theorems of Brooks and Reed be improved by forbidding subgraphs $F$ other than cliques? 
Johansson \cite{Joh_sparse} showed that for any graph $F$, if $G$ is $F$-free and has maximum degree $\Delta$, then 
\[\chi(G) = O\left(\frac{\Delta\log\log\Delta}{\log \Delta}\right),\]
where the $O(\cdot)$ is with respect to $\Delta$ and hides constants that may depend on $F$. This result was never published, however, employing a result of Shearer \cite{shearer1995independence}, both Molloy \cite{Molloy} and Bernshteyn \cite{bernshteyn2019johansson} provide a proof of this result in the more general settings of list coloring and correspondence coloring, respectively. In \cite{Joh_triangle},
Johansson removed the $\log\log \Delta$ factor for $F = K_3$ (see Molloy and Reed's book \cite[Chapter 13]{MolloyReed} for a textbook presentation of this proof), and in 1999, Alon, Krivelevich, and Sudakov extended this result to \emphdef{almost-bipartite graphs}, i.e., subgraphs of $K_{1, t, t}$ for some $t$ (see Corollary~\ref{corl: AKS}), leading them to pose Conjecture~\ref{conj:AKS}.

To date, almost-bipartite graphs remain the largest class of connected graphs for which Conjecture~\ref{conj:AKS} is known to hold, and generalizing this result to a larger class of graphs remains a tantalizing open problem. However, as long as $F$ contains a cycle, the bound in Conjecture~\ref{conj:AKS} is best possible up to the value of $c_F$, since there exist $\Delta$-regular graphs $G$ of arbitrarily high girth with $\chi(G) \geq (1/2)\Delta/\log \Delta$ \cite{BollobasIndependence} (when $F$ is acyclic, a simple degeneracy argument shows $\chi(G) = O(1)$). Therefore, much research has focused on improving upper bounds on the value of $c_F$ in an attempt to try to close the gap between the known upper and lower bounds. 
We highlight progress in Table~\ref{table:bounds} for the interested reader. (Note that analogues of all these results hold for \textit{list} and \textit{correspondence} coloring.)

\begin{table}[htb!]
	\resizebox{!}{3cm}{%
 \begin{tabular}{| l || l || l |}
				\hline
				$F$ & $c_F$ & References\\\hline\hline
				forest & $0$ & Follows from degeneracy\\\hline
				not forest & $\geq 1/2$ & Bollob\'as \cite{BollobasIndependence}
				\\\hline
				\multirow{3}{*}{$K_3$} & finite & Johansson \cite{Joh_triangle} \\\cline{2-3}
				& $\leq 4$ & Pettie--Su \cite{PS15} \\\cline{2-3}
				& $\leq 1$ & Molloy \cite{Molloy}\\\hline
				cycle/fan & $\leq 1$ & Davies--Kang--Pirot--Sereni \cite[\S5.2, \S5.5]{DKPS}\\\hline
				bipartite & $\leq 1$ & Anderson--Bernshteyn--Dhawan \cite{anderson2023colouring}\\\hline
				\multirow{3}{*}{$K_{1,t,t}$} & $O(t)$ & Alon--Krivelevich--Sudakov \cite{AKSConjecture} \\\cline{2-3}
				& $\leq t$ & Davies--Kang--Pirot--Sereni \cite[\S5.6]{DKPS}\\\cline{2-3}
				& $\leq 4$ & Anderson--Bernshteyn--Dhawan \cite{anderson2022coloring}\\\hline
			\end{tabular}%
  }
  \caption{Known bounds on $c_F$.}\label{table:bounds}
\end{table}

In particular, Davies, Kang, Pirot, and Sereni proved $c_F \leq 1$ for cycles and fans \cite[\S5.2, \S5.5]{DKPS}, and in the same paper, improved upon the result of Alon, Krivelevich, and Sudakov by showing $c_F \leq t$ for $F = K_{1, t, t}$ \cite[\S5.6]{DKPS}. The first two authors of this paper, together with Bernshteyn, showed $c_F \leq 1$ when $F$ is bipartite \cite{anderson2023colouring}; and, in addition, showed $c_F \leq 4$ when $F \subseteq K_{1, t, t}$, thereby making the bound for $c_F$ independent of $t$ \cite{anderson2022coloring}. This improves the bounds of Davies, Kang, Pirot, and Sereni when $t \geq 5$. We note that when we set $k < 1$, Theorem~\ref{theo: main generalization} recovers the aforementioned result of \cite{anderson2022coloring}.

\subsection{The color-degree setting}\label{subsection: color degree}

Recall from \S\ref{sec: history} the result of Johansson that $\chi(G) = O(\Delta/\log\Delta)$ for $K_3$-free graphs $G$ having maximum degree $\Delta$ sufficiently large.
Alon and Assadi extended Johansson's result in the list coloring setting by placing degree restrictions on the cover graph $H$ as opposed to $G$. 

\begin{theo}[{\cite[Proposition 3.2]{alon2020palette}}]\label{theo:alon_assadi}
    The following holds for $d$ sufficiently large.
    Let $G$ be a $K_3$-free graph and let $\mathcal{H} = (L, H)$ be a list cover of $G$ satisfying the following:
    \[|L(v)| \geq \frac{8\,d}{\log d}, \quad \Delta(H) \leq d.\]
    Then, $G$ admits a proper $\mathcal{H}$-coloring.
\end{theo}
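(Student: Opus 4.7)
The approach is to follow the iterative semi-random (R\"odl nibble) method developed by Johansson, Molloy, and Bernshteyn for coloring triangle-free graphs, in the variant that controls parameters through cover-degrees rather than graph-degrees. The key structural fact is that $H$ itself is triangle-free: any triangle in $H$ would use vertices from three pairwise distinct lists $L(v_1), L(v_2), L(v_3)$ (since each $L(v)$ is independent in $H$ by \ref{dp:list_independent}), so the $v_i$ would be pairwise $G$-adjacent, producing a triangle in $G$ and contradicting the hypothesis. The goal is then to construct a proper $\mathcal{H}$-coloring as an independent transversal of the partition $\set{L(v)\,:\, v \in V(G)}$ inside a triangle-free graph $H$ of maximum degree at most $d$.

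First I would run a single nibble round: with activation probability $p \asymp 1/d$, independently mark each color $c \in V(H)$ as \emph{active}; call $c$ \emph{kept} if $c$ is active and no $H$-neighbor of $c$ is active. For each vertex $v$ for which some color in $L(v)$ is kept, assign $v$ one such color (breaking ties arbitrarily) and delete $v$ from $G$; for each remaining vertex $u$, delete from $L(u)$ all colors conflicting with assignments given to colored $G$-neighbors of $u$. The quantities I would track through the iteration are the residual list size $\ell(v) \defeq |L(v)|$ and the residual cover-degree $d^{\ast}(v) \defeq \max_{c \in L(v)} \deg_{H'}(c)$, where $H'$ is the cover induced on the surviving vertices. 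Talagrand's inequality supplies concentration of both quantities about their expectations, provided the bad events (a color failing to be kept, or a color being deleted) are expressed as certifiable functions of few random bits, which is standard in this setting.

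The triangle-freeness of $H$ would be used to boost the expected number of kept colors in each list. Since $N_H(c)$ is an independent set in $H$ for every color $c$, a Shearer-type entropy estimate yields
\[\E\bigl[\#\{\text{kept colors in } L(v)\}\bigr] \;\gtrsim\; \ell \cdot \frac{\log d}{d},\]
an improvement by a factor of $\log d$ over the bound $\Omega(\ell/d)$ available for general sparse covers. Consequently the ratio $\ell/d^{\ast}$ grows by a multiplicative factor strictly greater than $1$ per round; starting from $\ell/d^{\ast} \geq 8/\log d$ and iterating $O(\log\log d)$ times brings the ratio up to a favorable absolute constant, at which point the Lov\'asz Local Lemma (or equivalently Haxell's independent-transversal theorem applied to the triangle-free cover $H'$) completes the coloring. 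The main obstacle I anticipate is calibrating this triangle-free nibble gain sharply enough to match the constant $8$ in the hypothesis $|L(v)| \geq 8d/\log d$: this requires not merely an asymptotic $\log d$ factor improvement but careful control of the constants in the Shearer-style survival estimate together with the Talagrand concentration parameters, while also enforcing $\Delta(H') \leq d^{\ast}$ with high probability uniformly over all surviving vertices in every round.
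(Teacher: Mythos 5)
Your structural observation is correct (and is exactly the paper's Proposition~\ref{prop: G F free implies H F free}\ref{item:global} with $F=K_3$: a triangle in $H$ projects to a triangle in $G$), and the nibble framework is the right family of techniques --- note the paper does not prove Theorem~\ref{theo:alon_assadi} at all, but cites it, and its own machinery (Theorem~\ref{theo:main_theo} with $k<1$, $s=t=1$) recovers an even stronger form via the \hyperref[algorithm: wcp]{Wasteful Coloring Procedure}. However, the quantitative engine of your proposal is flawed. With activation probability $p \asymp 1/d$, a color is kept with probability at most $p$, so $\E\bigl[\#\{\text{kept colors in } L(v)\}\bigr] \leq p\,\ell \asymp 1/\log d$; no Shearer-type or entropy argument can produce your claimed lower bound of order $\ell\log d/d \asymp 1$, since keeping requires activation in the first place. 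Triangle-freeness is not used to boost the per-round survival rate: it is used (as in the paper's Lemma~\ref{expectationKeptUncolor}, where $K_3$-freeness gives $N_H(c)\cap N_H(c')=\0$ for adjacent $c,c'$) only to show that the expected kept/uncolored degree behaves as if the relevant events were independent, i.e.\ $\E[\deg_{H'}(c)] \approx \keep\,\uncolor\,\deg_H(c)$. Consequently the ratio of list size to color-degree improves per round only by a factor $\uncolor = 1-\Theta(\eta)$ with $\eta \asymp 1/\log d$, not by a constant factor, so one needs $\Theta(\log d \cdot \log\log d)$ rounds (compare \ref{item: existence of i star} in Lemma~\ref{lemma: iteration helper 1}), not $O(\log\log d)$; and the bookkeeping that the lists do not vanish over that many rounds is precisely where the constant $8$ (or $4+\eps$) is spent.

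A second genuine gap is your choice of tracked quantity $d^{\ast}(v) = \max_{c\in L(v)}\deg_{H'}(c)$. This is exactly the statistic that fails to concentrate in the color-degree setting: two neighbors $c',c''$ of $c$ need not be adjacent (triangle-freeness forbids that), but they may share many common neighbors besides $c$, so the survival events $\set{c'\in V(H')}$ are strongly correlated and Talagrand's inequality does not apply ``as standard'' --- the Lipschitz/certificate parameters blow up. This is the reason Alon--Assadi (and this paper, following Jamall and Pettie--Su) track the \emph{average} color-degree $\overline{\deg}_{\mathcal H}(v)$, prune from $L'(v)$ every color whose degree exceeds twice the target bound (step~\hyperref[step: define L']{6} of the procedure --- the pruning that may halve the lists and forces the constant above $4$), and even then need the exceptional-event version of Talagrand together with a random partitioning argument to concentrate the relevant edge counts. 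Without replacing your maximum-degree bookkeeping by this average-degree-plus-pruning scheme (or some substitute concentration argument you have not supplied), the iteration invariants $\Delta(H')\leq d^{\ast}$ cannot be maintained with high probability, and the proof does not close.
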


Alon, Cambie, and Kang were able to prove a similar bound in the color-degree setting for bipartite graphs $G$ with an improved constant of $1+o(1)$ as opposed to $8$ \cite{alon2021asymmetric}.
In subsequent work, Cambie and Kang extended these results on bipartite graphs to correspondence coloring, where they show the following:

\begin{theo}[{\cite[Corollary 1.3]{cambie2022independent}}]\label{theo:ck}
    For all $\eps > 0$, the following holds for $d$ sufficiently large.
    Let $G$ be a bipartite graph and let $\mathcal{H} = (L, H)$ be a correspondence cover of $G$ satisfying the following:
    \[|L(v)| \geq (1+\eps)\frac{d}{\log d}, \quad \Delta(H) \leq d.\]
    Then, $G$ admits a proper $\mathcal{H}$-coloring.
\end{theo}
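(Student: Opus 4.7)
The plan is to exploit the bipartite structure of $G$ — which the cover $H$ inherits — to find an $\mathcal{H}$-coloring via a two-stage randomized procedure. Fix the bipartition $V(G) = A \sqcup B$. Combining \ref{dp:list_independent} and \ref{dp:matching} with the independence of $A$ and $B$ in $G$, the cover graph $H$ is itself bipartite with parts $V_A \defeq \bigsqcup_{v \in A} L(v)$ and $V_B \defeq \bigsqcup_{u \in B} L(u)$; in particular, $H[V_B]$ is edgeless, so producing a proper $\mathcal{H}$-coloring reduces to choosing $\phi|_A$ for which, for every $u \in B$, the residual list
\[L'(u) \defeq \{c \in L(u) : c \not\sim \phi(v) \text{ for all } v \in N_G(u)\}\]
is non-empty (we may then pick $\phi(u) \in L'(u)$ arbitrarily).

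For each $v \in A$, I would sample $\phi(v) \in L(v)$ independently and uniformly. Setting $q \defeq (1+\eps)d/\log d$ and using that $|N_H(c) \cap L(v)| \leq 1$ by \ref{dp:matching} together with $\sum_{v \in N_G(u)} |N_H(c) \cap L(v)| \leq \deg_H(c) \leq d$, independence across neighbors gives
\[\P[c \in L'(u)] \ =\ \prod_{v \in N_G(u)} \left(1 - \frac{|N_H(c) \cap L(v)|}{|L(v)|}\right) \ \geq\ \exp\!\left(-(1+o(1))\,\frac{d}{q}\right) \ =\ d^{-1/(1+\eps) - o(1)}.\]
Consequently $\E[|L'(u)|]$ grows polynomially in $d$. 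Because $|L'(u)|$ is a $2$-Lipschitz function of the independent choices $\{\phi(v) : v \in N_G(u)\}$ and low values are certifiable by a small collection of witness assignments (namely the $\phi(v)$'s that eliminate the surviving colors), Talagrand's inequality yields
\[\P[L'(u) = \0] \ \leq\ \exp\!\left(-\Omega\bigl(\E[|L'(u)|]\bigr)\right),\]
which is super-polynomially small in $d$.

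To conclude, I would apply the symmetric Lov\'asz Local Lemma to the events $E_u \defeq \{L'(u) = \0\}$ for $u \in B$; each $E_u$ depends only on $\{\phi(v) : v \in N_G(u) \cap A\}$, so $E_u$ and $E_{u'}$ are mutually independent whenever $u$ and $u'$ share no common neighbor in $A$. With positive probability none of the $E_u$ occurs, and since $H[V_B]$ is edgeless, any extension $\phi(u) \in L'(u)$ completes a proper $\mathcal{H}$-coloring. The main obstacle is precisely this LLL step: the dependency degree among the $E_u$'s is controlled by the number of $u' \in B$ sharing a neighbor with $u$, which can be as large as $\Delta(G)^2$, and $\Delta(G)$ is a priori unbounded in terms of $d = \Delta(H)$. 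Overcoming this likely requires either an entropy-compression / Moser--Tardos-style argument that sidesteps explicit dependency counts, or a finer iterative nibble procedure that partitions $B$ into blocks of controllable local dependency — in the spirit of the semi-random analyses used throughout this paper.
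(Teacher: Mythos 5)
First, note that the paper does not prove Theorem~\ref{theo:ck} at all: it is quoted from Cambie--Kang \cite{cambie2022independent} as background, so there is no internal proof to compare against, and your argument has to stand on its own. Your reduction is fine: $H$ is bipartite with $H[V_B]$ edgeless, the expectation bound $\P[c \in L'(u)] \geq d^{-1/(1+\eps)-o(1)}$ is correct, and so $\E[|L'(u)|] = \Omega(d^{\eps/(1+\eps)}/\log d)$. The genuine gap is the concentration step. Talagrand's inequality does not give $\P[L'(u)=\0] \leq \exp(-\Omega(\E[|L'(u)|]))$ here: $|L'(u)|$ is not $r$-certifiable for small $r$ (certifying that a color survives requires exhibiting the choices of up to $d$ neighbours), and if you instead work with the certifiable variable $D(u) \defeq$ number of colors of $L(u)$ removed, the deviation you need is $t \approx \E[|L'(u)|] \approx d^{\eps/(1+\eps)}/\log d$, while the Talagrand bound scales like $\exp(-t^2/\Theta(\E[D(u)]))$ with $\E[D(u)] \approx (1+\eps)d/\log d$; for small $\eps$ this exponent tends to $0$, so the inequality is vacuous. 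The smallness of $\P[L'(u)=\0]$ is not a concentration-of-a-count phenomenon but an occupancy one: each neighbour removes at most one color of $L(u)$, so extinction requires all $\approx(1+\eps)d/\log d$ colors to be hit by distinct neighbours, and one needs something like negative association of the ``bin nonempty'' indicators (or the hard-core/counting arguments Cambie and Kang actually use, or an iterative nibble) to get $\P[L'(u)=\0] \leq \prod_{c}\P[c \text{ removed}] \leq \exp(-\Omega(d^{\eps/(1+\eps)}/\log d))$. This missing idea is precisely where the sharp constant $1+\eps$ lives, so the proof as written does not go through.

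By contrast, the obstacle you single out at the end — the LLL dependency degree — is not the real problem. Truncate every list to exactly $q \defeq \lceil (1+\eps)d/\log d\rceil$ colors (this only weakens the hypothesis), and observe that $E_u$ depends only on the choices of those $v\in A$ for which some color of $L(v)$ corresponds to a color of $L(u)$; there are at most $\sum_{c\in L(u)}\deg_H(c) \leq qd$ such $v$, and each such $v$ is relevant to at most $qd$ vertices $u'\in B$ by the same count. Hence the dependency degree is at most $(qd)^2 \leq d^4$, polynomial in $d$, and the symmetric \hyperref[LLL]{\LLL} applies once the extinction probability is super-polynomially small; counting dependencies through $H$ rather than through $\Delta(G)$ removes the issue entirely. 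So the structure of your plan is salvageable, but the Talagrand step must be replaced by a genuine occupancy/correlation argument, and leaving the LLL step as an acknowledged open issue (when it is in fact the easy part) means the proposal is incomplete as submitted.
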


Cambie and Kang conjectured that Theorem~\ref{theo:ck} holds for $K_3$-free graphs as well, i.e., a color-degree version of Bernshteyn's result mentioned earlier.

\begin{conj}[{\cite[Conjecture 1.4]{cambie2022independent}}]\label{conj:CK}
    For all $\eps > 0$, the following holds for $d$ sufficiently large.
    Let $G$ be a $K_3$-free graph and let $\mathcal{H} = (L, H)$ be a correspondence cover of $G$ satisfying the following:
    \[|L(v)| \geq (1+\eps)\frac{d}{\log d}, \quad \Delta(H) \leq d.\]
    Then, $G$ admits a proper $\mathcal{H}$-coloring.
\end{conj}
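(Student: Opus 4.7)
The plan is to adapt the local entropy / wasteful coloring method used by Molloy~\cite{Molloy} and Bernshteyn~\cite{bernshteyn2019johansson} for triangle-free (correspondence) coloring to the color-degree setting. First, by Proposition~\ref{prop: G F free implies H F free} (applying~\ref{item:global} with $F = K_3$, whose hypothesis is vacuously satisfied as $K_3$ has no non-adjacent pair), the cover graph $H$ is triangle-free whenever $G$ is. Combined with~\ref{dp:list_independent} of Definition~\ref{def:corr_cov}, this reduces the conjecture to the following transversal problem: given a triangle-free graph $H$ with $\Delta(H) \leq d$ and a partition $V(H) = \bigsqcup_{v \in V(G)} L(v)$ into independent sets with $|L(v)| \geq (1+\eps) d/\log d$, find an independent set $I \subseteq V(H)$ with $|I \cap L(v)| = 1$ for each $v \in V(G)$.

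To construct $I$, I would run a R\"odl-nibble-type iterative semi-random procedure. In round $i$, each surviving vertex $v$ has a palette $L_i(v) \subseteq L(v)$ and the residual cover has maximum degree at most $d_i$. Assign $v$ a uniformly random tentative color $\phi(v) \in L_i(v)$, make $\phi(v)$ permanent if $\phi(v) \not\sim \phi(u)$ for every surviving neighbor $u$, then prune $L_i(v)$ by removing colors that correspond to a permanently colored neighbor. Exploiting the triangle-free structure of $H$ via an entropy/first-moment computation in the style of Molloy, one verifies that in expectation every surviving vertex retains a palette of size at least $(1+\eps/2)\,d_{i+1}/\log d_{i+1}$, preserving the critical hypothesis in the next round. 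After $\Theta(\log d)$ iterations the residual instance can be finished via a direct Lov\'asz Local Lemma application.

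The main obstacle---and presumably the reason this conjecture remains open---is obtaining the required concentration for $|L_{i+1}(v)|$ in the color-degree regime. In the vertex-degree setting, concentration is obtained via Talagrand's inequality with $O(\deg_G(v))$ Lipschitz coordinates; here $\deg_G(v)$ may be arbitrarily large. A natural workaround is to observe that $|L_{i+1}(v)|$ depends only on the random choices made at the $O(|L(v)|\,d) = O(d^2/\log d)$ vertices whose palette meets $N_H(L(v))$, and to apply a bounded-differences or Talagrand-style inequality restricted to this set. The LLL would then be set up on a dependency graph whose degree is polynomial in $d$ (two bad events are dependent only when their relevant second neighborhoods in $H$ intersect). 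The delicate aspect is that the slack in the first-moment bound is only linear in $\eps$, so the combined concentration-plus-LLL error must remain subconstant across all $\Theta(\log d)$ rounds---essentially requiring a tight simultaneous analysis, rather than a round-by-round one, which is where I anticipate the technical heart of a complete proof to lie.
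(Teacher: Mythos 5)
The statement you were given is Conjecture~\ref{conj:CK} (Cambie--Kang), which the paper explicitly records as \emph{open}: the best known bound in this color-degree correspondence setting is $(4+\eps)d/\log d$ (Theorem~\ref{theo:our_old_result}, recovered here by Corollary~\ref{corl:bipartite} with $k<1$), not $(1+\eps)d/\log d$. Your proposal is accordingly a programme rather than a proof, and you concede the decisive step yourself: concentration of the palette sizes and color-degrees after each nibble round in the color-degree regime. Your suggested workaround does not close it. Restricting a bounded-differences or Talagrand-style argument to the $O(d^2/\log d)$ coordinates that can influence $|L_{i+1}(v)|$ yields a deviation window of order $\sqrt{d^2/\log d}=d/\sqrt{\log d}$, which swamps the required accuracy $\eps\,d/\log d$; this is exactly why the existing color-degree arguments (Pettie--Su, Alon--Assadi, Anderson--Bernshteyn--Dhawan, and the present paper) resort to Talagrand with exceptional outcomes, random partitioning of the relevant edge sets, and, crucially, to controlling the \emph{average} color-degree $\overline{\deg}_{\mathcal H}(v)$ rather than the maximum. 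That last device forces one to discard every color whose degree exceeds twice the average, which may halve each list, and together with the two-sided slack on list sizes it is precisely the source of the constant $4$ in Theorem~\ref{theo:our_old_result} and Theorem~\ref{theo:main_theo}. Nothing in your outline removes this factor-of-two loss, so even if the concentration were pushed through along the lines you sketch, the method as described would deliver $C\geq 4$, not $1+\eps$.

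Two smaller points. Your reduction via Proposition~\ref{prop: G F free implies H F free}\ref{item:global} (vacuously applicable to $F=K_3$) is correct and is the paper's own viewpoint; but in the first-moment computation the triangle-freeness must be exploited inside the cover graph $H$ (the neighborhoods $N_H(c)$), not in $G$, since in the color-degree setting $\deg_G(v)$ and the structure of $G[N_G(v)]$ are uncontrolled --- your sketch elides this. More fundamentally, the ``tight simultaneous analysis across all $\Theta(\log d)$ rounds'' that you defer is not a technical afterthought: it is the content of the conjecture, which remains open even for list covers. So the proposal contains a genuine gap rather than a proof, and it should be presented as progress toward, not a resolution of, Conjecture~\ref{conj:CK}.
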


This is still an open problem, however, the first two authors of this manuscript along with Bernshteyn made progress towards it in \cite{anderson2022coloring}, achieving a bound of $(4+\epsilon)d/\log d$. They also generalized the result from triangle-free graphs $G$ to $F$-free covers $H$ when $F$ is almost-bipartite:

\begin{theo}[{\cite[Corollary 1.11]{anderson2022coloring}}]\label{theo:our_old_result}
    Let $F$ be an almost-bipartite graph.
    For all $\eps > 0$, the following holds for $d$ sufficiently large.
    Let $G$ be a graph and let $\mathcal{H} = (L, H)$ be a correspondence cover of $G$ satisfying the following:
    \begin{itemize}
        \item $H$ is $F$-free,
        \item $\Delta(H) \leq d$, and
        \item $|L(v)| \geq (4+\eps)d/\log d$.
    \end{itemize}
    Then $G$ admits a proper $\mathcal{H}$-coloring.
\end{theo}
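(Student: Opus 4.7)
The plan is to exploit the $F$-freeness of the cover graph $H$ to establish strong local sparsity, and then run a Rödl nibble / semi-random colouring procedure on $H$ whose analysis is driven by Shearer-type independence-number bounds on the links. The first reduction is purely combinatorial: because $F \subseteq K_{1,t,t}$ for some $t$, any embedding of $K_{1,t,t}$ into $H$ restricts to an embedding of $F$, and so $H$ being $F$-free forces $H$ to be $K_{1,t,t}$-free as well. Consequently, for every colour $c \in V(H)$ the link $H[N_H(c)]$ is $K_{t,t}$-free, and Theorem \ref{theo:KST} yields $|E(H[N_H(c)])| = O_t(d^{2-1/t}) \ll d^2$. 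Thus $H$ is locally sparse in the ordinary sense, with sparsity parameter much smaller than $d^2$.

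Next I would run the standard nibble on $H$. In each round, independently activate each colour $c \in V(H)$ with probability $p := \alpha\log d / d$, for a constant $\alpha = \alpha(\eps)$ to be tuned. Declare $c \in L(v)$ \emph{kept} if $c$ is activated and no neighbour of $c$ in $H$ is activated; whenever the kept set at a vertex $v$ of $G$ is non-empty, tentatively colour $v$ with a uniformly random kept colour, and resolve proper-colouring conflicts by uncolouring one endpoint of each conflict. For each still-uncoloured vertex $v$ of $G$, track the residual list size $\ell(v)$ and the residual colour-degree $d(v)$. Using Talagrand's inequality for concentration, one shows that $\ell(v)$ and $d(v)$ both contract by roughly the same factor per round, so that the ratio $\ell(v)/d(v)$ is preserved up to lower-order error. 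After $O(\log d)$ rounds the residual colour-degree is much smaller than the residual list size, and the symmetric Lovász Local Lemma finishes the colouring.

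The main obstacle — and what pins the constant at $4$ rather than Bernshteyn's $(1+o(1))$ in \cite{bernshteyn2019johansson} — is obtaining a sharp bound on the expected number of kept colours per vertex. For triangle-free $H$, Shearer's classical bound on the independence number of triangle-free graphs injects an extra $\log d$ factor responsible for the $(1+o(1))$ threshold. Here $H[N_H(c)]$ need not be triangle-free; however, the local sparsity $|E(H[N_H(c)])| = O(d^{2-1/t})$ forces a uniformly random small sub-link to be essentially triangle-free with high probability. Applying a Shearer-type entropy estimate \emph{after} this random restriction, and carefully propagating the resulting constants through the iteration, yields an effective factor of $4$ in place of $1$, producing the list-size hypothesis $|L(v)| \geq (4+\eps)d/\log d$. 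The slack $\eps$ is exactly what is needed to absorb error terms arising from Talagrand concentration and from the residual conflicts at each round.
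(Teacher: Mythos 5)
Your high-level skeleton (reduce $F$-freeness of $H$ to $K_{1,t,t}$-freeness, apply K\H{o}v\'ari--S\'os--Tur\'an to get $|E(H[N_H(c)])| = O_t(d^{2-1/t})$, run a nibble, finish with the Local Lemma) is indeed the route this statement follows — in this paper it is the $k<1$ case of Theorem~\ref{theo:main_theo}, proved via the \hyperref[algorithm: wcp]{Wasteful Coloring Procedure}. But the step you offer as the source of the constant $4$ is not an argument. No Shearer-type bound enters this regime at all: Shearer's estimate is the engine behind the $(1+o(1))\Delta/\log\Delta$ bounds for triangle-free graphs and Johansson's $O(\Delta\log\log\Delta/\log\Delta)$ bound, and your proposal to ``randomly restrict the link so it becomes essentially triangle-free and then apply a Shearer-type entropy estimate'' is a hand-wave with no estimate behind it — the links here are sparse in terms of $K_{t,t}$-copies, not triangle-free, and there is no known way to extract a list-size threshold of $(4+\eps)d/\log d$ from such a restriction. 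In the actual proof the local sparsity of $H[N_H(c)]$ is used only once: to show that the keep-events of $c$ and of $c'\in N_H(c)$ are nearly independent (most such pairs have few common neighbours), so that $\E[|E_K(v)\cap E_U(v)|]\leq \keep^2\,\uncolor\,\ell(v)\,\overline{\deg}_{\mathcal{H}}(v)(1+o(1))$ (Lemma~\ref{expectationKeptUncolor}). The factor $4$ has a completely different, bookkeeping origin: the procedure controls only the \emph{average} color-degree $\overline{\deg}_{\mathcal{H}}(v)$ and then discards every color whose degree exceeds roughly twice that average, which may delete up to half of each list, and the iteration only maintains list sizes within a factor of $2$ of the nominal value $\ell_i$ (conditions \ref{item:list_assumption} and step~\hyperref[step: define L']{6} of the procedure); this is exactly why $C>4$ is forced. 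Your proposal never identifies this mechanism, so the hypothesis $|L(v)|\geq(4+\eps)d/\log d$ is unsupported.

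Two further points would sink the proof as written even with the right constant-tracking. First, in the color-degree correspondence setting the per-color degree $\deg_{H'}(c)$ is \emph{not} concentrated — neighbours of $c$ can share many common neighbours, so ``using Talagrand's inequality'' fails outright because the Lipschitz parameter is too large; one must instead work with the average color-degree (Jamall, Pettie--Su, Alon--Assadi) and use an exceptional-event version of Talagrand together with a random partitioning of $\bigcup_{c\in L(v)}E_H(c)$, as in Lemma~\ref{concentrationKeptUncolor}. Second, your parameters are off: with activation probability $p=\alpha\log d/d$ the probability that a color survives (no activated neighbour) is $(1-p)^{\Theta(d)}=d^{-\Theta(\alpha)}$, so a single round decimates the lists while the probability that a given vertex gets colored is only $O(d^{-\alpha})$; the ratio of residual color-degree to residual list size does not contract, and the scheme collapses before it can iterate. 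The nibble needs activation probability $\Theta(1/d)$ (here $\eta/\ell$ with $\eta\approx 1/\log d$ and $\ell\approx Cd/\log d$), so that $\keep = 1-o(1)$ per round and the ratio $d_i/\ell_i$ decays by a factor $\uncolor = 1-\Theta(\eta)$ over roughly $\eta^{-1}\log d\log\log d$ rounds, at which point Proposition~\ref{prop:final_blow} applies.
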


As $K_3$ is an almost-bipartite graph, Theorem~\ref{theo:our_old_result} along with Corollary~\ref{corl:F-free}  constitutes progress toward Conjecture~\ref{conj:CK}.
We remark that it is not known if Conjecture~\ref{conj:CK} holds even for list covers.

\subsection{Comparison of proof techniques with prior work}\label{subsection: comparison}

In this section, we compare our techniques to earlier approaches in related works.
A core component of our proof is a result showing that $(k, K_{s, t})$-locally-sparse graphs are $k'$-locally-sparse for appropriate $k'$ (see Proposition~\ref{prop:KST_multiple}).
We prove Theorem~\ref{theo:main_theo} by combining this result with the ``R\"odl nibble method'' for graph coloring, an iterative procedure to construct the desired coloring.
We provide a detailed overview in~\S\ref{sec: proof overview}.

The astute reader may wonder why Proposition~\ref{prop:KST_multiple} together with Theorem~\ref{theo:DKPSCorrespondence} for $t = 2$ cannot be used to prove Corollary~\ref{corl:DP_chromatic_number}. 
The bound obtained by this method would be as follows for $k^\star = \frac{(s+t)^2\,\Delta^{s+t - 1}}{2\,s^s\,t^t}$:
\begin{align}\label{eq: dkps + prop KST multiple}
    \chi_c(G) = \left\{\begin{array}{cc}
    O\left(\dfrac{s\,t\,\Delta}{\log \Delta}\right) &  \text{ if } k \leq k^\star;\vspace{7pt} \\
    O\left(\dfrac{s\,t\,\Delta}{(s+t)\log \left(\Delta\, k^{-1/(s+t)}\right)}\right) &  \text{ if } k > k^\star.
\end{array}\right.
\end{align}
Note that our result falls in the setting where $k \leq k^\star$.
Our approach not only yields an improved dependence on $k$, but also achieves a constant factor independent of $s$ and $t$.
Additionally, our result is significantly stronger in the regime $st = \omega(1)$.
Indeed, while Corollary~\ref{corl:DP_chromatic_number} is not stated as such, one can deduce from Theorem~\ref{theo:main_theo} that the result holds for all $s, t$ satisfying $st = O(\log \Delta / \log \log \Delta)$.
Clearly, Corollary~\ref{corl:DP_chromatic_number} outperforms the bound from \eqref{eq: dkps + prop KST multiple} in this regime.
Finally, we recall that our result yields the first color-degree proof of Theorem~\ref{theo:AKS} as a special case.
In fact, the proof strategies of both Theorem~\ref{theo:AKS} and Theorem~\ref{theo:DKPSCorrespondence} fail in this setting, which is considered the benchmark in the graph coloring literature.

We remark that a similar distinction in approach is evident when comparing the works \cite{DKPS} and \cite{anderson2022coloring} regarding the value of $c_{K_{1, t, t}}$.
In \cite{DKPS}, the authors observe that every $K_{1, t, t}$-free graph satisfies $e(G[N(v)]) = O_t(\Delta^{2-1/t})$ to deduce their bound $c_{K_{1, t, t}} \leq t$ from Theorem~\ref{theo:DKPSCorrespondence}.
In contrast, the first two authors of this manuscript along with Bernshteyn prove the bound $c_{K_{1, t, t}} \leq 4$ in \cite{anderson2022coloring} (see Theorem~\ref{theo:our_old_result}) through a stronger observation: every $K_{1, t, t}$-free graph satisfies $e(G[S]) = O_t(|S|^{2-1/t})$ for any $v\in V(G)$ and $S \subseteq N(v)$.
This allows them to take advantage of the local sparsity through the iterations of the nibble method and improve the bound of $c_{K_{1, t, t}}$ for $t > 4$.

We remark that we employ a similar observation in our proof: every $(k,K_{s, t})$-locally-sparse graph satisfies $G[S]$ is $(k, K_{s, t})$-sparse for any $v\in V(G)$ and $S \subseteq N(v)$.
In fact, our approach is inspired by that of \cite{anderson2022coloring} and we apply a number of their arguments in a black box manner (we discuss this further in~\S\ref{sec: proof overview}).

We conclude this section with a brief discussion of the bipartiteness of $F$.
The version of the nibble method that we employ can be adapted to any $F$-free graph where $F$ satisfies $ex(n, F) = o(n^2)$.
Indeed, by the celebrated Erd\H{o}s--Stone Theorem \cite{erdos1946structure}, bipartite graphs are the only graphs for which this is true.
It is, therefore, unclear how to extend our result to arbitrary graphs $F$.\footnote{In follow-up work, the second named author of this manuscript considers the case $F = K_r$ for $r \geq 3$; the results obtained are far weaker than those in this paper \cite{dhawan2024bounds}.}

\section{Proof overview}\label{sec: proof overview}

Let $G$ be a graph and let $\mathcal{H} = (L, H)$ be a correspondence cover of $G$ satisfying the conditions of Theorem~\ref{theo:main_theo}.
In order to find a proper $\mathcal{H}$-coloring of $G$, we use a variant of the well-known ``R\"odl Nibble method'', in which we randomly color a small fraction of vertices and repeat the procedure on the uncolored vertices.
In particular, we use a coloring procedure developed by Kim \cite{Kim95}, based on a technique of R\"odl in \cite{RODL198569}. A version of Kim's procedure was further developed by Molloy and Reed \cite{MolloyReed}, which they call the \hyperref[algorithm: wcp]{Wasteful Coloring Procedure}.
We specifically employ the version generalized to correspondence coloring by the first two authors and Bernshteyn in \cite{anderson2022coloring} (see \S\ref{subsec: alg overview} for a detailed description).
At each step of the procedure, we start with a graph $G$ and a correspondence cover $\mathcal H$ of $G$, and randomly construct a partial $\mathcal H$-coloring $\phi$ of $G$ and lists $L'(v) \subseteq L(v)$ satisfying certain properties.
In particular, let $G_\phi$ be the graph induced by the uncolored vertices under $\phi$.
For a vertex $v \in V(G_\phi)$ and a color $c \in L(v)$, it is possible $\phi$ cannot be extended by coloring $v$ with $c$, as there may be a vertex $u \in N_G(v)$ such that $c \sim \phi(u)$.
We therefore define $L_\phi(v)$ to be the set of \textit{available colors} for $v$, i.e.,
\[L_\phi(v) \defeq \set{c \in L_\phi(v)\,:\, N_H(c) \cap \im(\phi) = \0}.\]
It seems natural that the lists $L'(\cdot)$ produced by the \hyperref[algorithm: wcp]{Wasteful Coloring Procedure} should satisfy $L'(v) = L_{\phi}(v)$; however, in actuality $L'(v)$ is potentially a strict subset of $L_{\phi}(v)$ (hence the ``wasteful'' in the name; see the discussion in \cite[Chapter 12.2.1]{MolloyReed} for the utility of such ``wastefulness'').

Let us define a cover $\mathcal{H}'$ for the graph $G_\phi$ by letting
\[H' \defeq H\left[\bigcup_{v \in V(G_{\phi})}L'(v)\right].\]
Let $\ell \defeq \min |L(v)|$, $\ell' \defeq \min|L'(v)|$, $d \defeq \Delta(H)$, and $d' \defeq \Delta(H')$.
We say $\phi$ is ``good'' if the ratio $d'/\ell'$ is considerably smaller than $d/\ell$.
If indeed the output of the \hyperref[algorithm: wcp]{Wasteful Coloring Procedure} produces a ``good'' coloring, then we may repeatedly apply the procedure until we are left with a cover $(\tilde L, \tilde H)$ of the uncolored vertices such that $\min |\tilde L(v)| \geq 8\Delta(\tilde H)$.
At this point, we can complete the coloring by applying the following proposition:

\begin{prop}\label{prop:final_blow}
    Let $\mathcal{H} = (L, H)$ be a correspondence cover of $G$. 
    If there is an integer $\ell$ such that $|L(v)| \geq \ell$ and $\deg_H(c) \leq \ell/8$ for each $v \in V(G)$ and $c \in V(H)$, then $G$ admits a proper $\mathcal{H}$-coloring.
\end{prop}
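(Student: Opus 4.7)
I would prove the proposition via the probabilistic method, using the asymmetric \LLL{} on the simplest random experiment: for each $v \in V(G)$, sample $\phi(v) \in L(v)$ uniformly at random, independently. Let $M_{uv} \defeq E(H[L(u) \cup L(v)])$. For each edge $uv \in E(G)$ and each matching pair $(c, c') \in M_{uv}$, introduce the bad event $A_{uv, c, c'} \defeq \{\phi(u) = c,\ \phi(v) = c'\}$; the joint non-occurrence of these events is exactly the statement that $\phi$ is a proper $\mathcal{H}$-coloring.

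The first step is to record the two inputs to the \LLL{}. Each event satisfies $\Pr[A_{uv,c,c'}] = 1/(|L(u)|\,|L(v)|)$, and $A_{uv,c,c'}$ is mutually independent of the family of all events indexed by an edge of $G$ disjoint from $\{u,v\}$. The number of bad events that share the vertex $u$ equals the number of edges of $H$ incident to $L(u)$, namely $\sum_{c'' \in L(u)}\deg_H(c'') \leq |L(u)|\,\ell/8$ by hypothesis, and symmetrically for $v$. It is this identification of ``events dependent on $A$'' with ``edges of $H$ meeting $L(u) \cup L(v)$'' that makes the color-degree hypothesis drive the bound.

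Then I would apply the asymmetric \LLL{} with the tailored weights $x_{A_{uv,c,c'}} \defeq 2/(|L(u)|\,|L(v)|)$, chosen to cancel the non-uniformity coming from different list sizes. The sum $\sum_{B \sim A} x_B$ splits into a contribution from events sharing $u$ and one from events sharing $v$; each is at most $1/4$ by the degree bound above combined with $|L(w)| \geq \ell$ for every $w$, so $\sum_{B \sim A} x_B \leq 1/2$. The elementary inequality $\prod(1-x_B) \geq 1 - \sum x_B$ then yields $x_A \prod_{B \sim A}(1 - x_B) \geq x_A/2 = \Pr[A]$, which is precisely the hypothesis of the asymmetric \LLL{}. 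The conclusion is that $\phi$ avoids every bad event with positive probability, producing a proper $\mathcal{H}$-coloring. \ep{The corner case $\ell \leq 7$ is trivial since then $\deg_H(c) \leq \ell/8 < 1$ forces $H$ to be edgeless.}

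There is no substantive obstacle; the one design choice worth flagging is to index bad events by matching pairs rather than by edges of $G$. Per-edge events $\{\phi(u)\sim\phi(v)\}$ would require a bound on $\Delta(G)$, which is not among the hypotheses, whereas the per-pair formulation with weights scaling as $1/(|L(u)||L(v)|)$ allows $\deg_H(c)\leq\ell/8$ to directly drive the dependency sum down to $1/2$.
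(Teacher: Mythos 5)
Your proof is correct and follows essentially the same route as the standard argument the paper relies on (it cites Reed and the appendix of Bernshteyn rather than spelling it out): color each vertex uniformly at random from its list and apply the \LLL{} to the bad events given by matched color pairs. The only cosmetic difference is that you invoke the asymmetric version with weights $2/(|L(u)|\,|L(v)|)$ to absorb unequal list sizes, whereas the usual write-up (and the only version stated in the paper, Theorem \ref{LLL}) first truncates every list to exactly $\ell$ colors and then applies the symmetric lemma with $p = 1/\ell^2$ and $\dlll \leq \ell^2/4$, so that $4p\dlll \leq 1$.
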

 
This proposition is a rather standard application of the \hyperref[LLL]{Lov\'asz Local Lemma} (Theorem~\ref{LLL} below), and one can find the first use of this proposition (for list colorings) in \cite{reed1999strengthening}, and its correspondence coloring version in \cite[Appendix]{bernshteyn2019johansson}.

Therefore, we spend considerable effort showing that the output of the \hyperref[algorithm: wcp]{Wasteful Coloring Procedure} is ``good''.
The first step in showing this is to compute bounds on the expected values of the list sizes $|L'(v)|$ and degrees of colors $\deg_{H'}(c)$.
In fact, we can show that
\[\frac{\E[\deg_{H'}(c)]}{\E[|L'(v)|]} \approx \uncolor\,\frac{d}{\ell},\]
where $\uncolor < 1$ is a certain quantity defined precisely in \S\ref{subsec: alg overview}.
With this in hand, the next step would be to show that the above expression holds with high probability as we may then apply the \hyperref[LLL]{Lov\'asz Local Lemma} to conclude there exists a ``good'' partial coloring $\phi$.
Unfortunately, the random variable $\deg_{H'}(c)$ may not be concentrated about its expected value if the neighbors of $c$ have many common neighbors.
In particular, the events $\set{c' \in V(H')}$ for $c' \in N_H(c)$ may be strongly correlated, increasing the variance of $\deg_{H'}(c)$.
An idea of Jamall \cite{Jamall}, which was developed further by Pettie and Su \cite{PS15}, Alon and Assadi \cite{alon2020palette}, and the first two authors and Bernshteyn \cite{anderson2022coloring} allows one to overcome this issue.
Specifically, instead of considering the \textit{maximum color-degree} we consider the \textit{average color-degree} for each $v \in V(G)$:
\[\overline{\deg}_{\mathcal{H}}(v) \defeq \frac{1}{|L(v)|}\sum_{c \in L(v)}\deg_H(c).\]
Roughly speaking, the average color-degree is easier to concentrate than the maximum color-degree because the probability that the event described above occurs for \textit{all} $c$ in a list is relatively small. For a further discussion of why considering the average color-degree is useful, see the discussion by Pettie and Su in \cite[\S1]{PS15}.
Once we show $\deg_{\mathcal{H}'}(v)$ is concentrated, we remove any color $c \in L'(v)$ satisfying $\deg_{H'}(c) \geq 2\overline{\deg}_{\mathcal{H}'}(v)$.
In this way, we have an upper bound on $\Delta(H')$ as desired.
We remark that at this step we remove possibly up to half the vertices in $L'(v)$, which is why we require $C > 4$ in Theorem~\ref{theo:main_theo}.

A curious aspect of the analysis of this coloring procedure is that the local sparsity is only employed in the computation of $\E[\overline{\deg}_{H'}(v)]$.
In particular, as $H$ is $(k, K_{s, t})$-locally-sparse, the subgraph induced by $N_H(c)$ contains ``few'' edges.
As a result, the event $\set{c \in V(H')}$ and the random variable $\deg_{H'}(c)$ are ``nearly independent.''
This is somewhat surprising as for most applications of the \hyperref[algorithm: wcp]{Wasteful Coloring Procedure}, the concentration is considered to be the ``harder'' part of the analysis.

In the proof of Theorem~\ref{theo:DKPS}, Davies, Kang, Pirot, and Sereni obtained an upper bound on the number of edges in $G[N_G(v)]$ for a $(k, P_t)$-locally-sparse graph $G$ and a vertex $v \in V(G)$.
In fact, the bound follows from the following more general statement:

\begin{fact}\label{fact:edges}
    Let $G$ be a $(k, F)$-locally-sparse graph.
    For any $v \in V(G)$, we have
    \[e(G[N(v)]) \leq k + \text{\normalfont ex}(\deg_G(v), F),\]
    where $\text{\normalfont ex}(n, F)$ is the maximum number of edges in an $n$-vertex $F$-free graph.
\end{fact}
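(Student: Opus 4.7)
The plan is to reduce the question to the Turán-type extremal function by a greedy edge-removal argument. Set $H \defeq G[N(v)]$, let $n \defeq \deg_G(v) = |V(H)|$, and let $m$ denote the number of copies of $F$ contained in $H$. By the hypothesis that $G$ is $(k,F)$-locally-sparse, we have $m \leq \lfloor k \rfloor \leq k$.

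The key observation is that we can destroy every copy of $F$ by removing at most one edge per copy. Formally, I would proceed by induction on $m$ (or, equivalently, via the following iterative procedure): while $H$ still contains a copy of $F$, pick any edge $e$ lying on such a copy and delete it, obtaining a graph $H'$ on the same vertex set with $e(H') = e(H) - 1$ and strictly fewer copies of $F$ than $H$ (since the copy through $e$ is destroyed, and no new copies are created by edge deletion). After at most $m$ iterations, the resulting graph $H^\ast$ is $F$-free on $n$ vertices, so by definition of the extremal function,
\[
    e(H^\ast) \,\leq\, \text{ex}(n, F) \,=\, \text{ex}(\deg_G(v), F).
\]

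Since each iteration removes exactly one edge, the original edge count satisfies
\[
    e(H) \,=\, e(H^\ast) + (\text{number of deletions}) \,\leq\, \text{ex}(\deg_G(v), F) + m \,\leq\, \text{ex}(\deg_G(v), F) + k,
\]
which is exactly the claimed bound. There is no genuine obstacle here; the only small point to verify is the monotonicity statement that deleting an edge cannot create new copies of $F$, which is immediate because any copy of $F$ in $H'$ is also a copy of $F$ in $H$. Thus the argument goes through cleanly without any dependence on the structure of $F$ (it applies to arbitrary graphs $F$, not only bipartite ones).
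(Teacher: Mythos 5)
Your argument is correct and is essentially identical to the paper's proof, which consists of the single observation that one can make $G[N(v)]$ $F$-free by removing at most $k$ edges (one edge per copy of $F$); you simply spell out the greedy deletion and the monotonicity of copy-counts under edge removal. The only caveat, shared by the paper, is that the final parenthetical claim of full generality implicitly requires $F$ to have at least one edge, since otherwise no edge deletion can destroy a copy of $F$ and $\text{ex}(n,F)$ is not even well defined.
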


The proof of the above statement follows from the fact that one can make the subgraph $G[N(v)]$ $F$-free by removing at most $k$ edges.
Note that the bound provided by Fact~\ref{fact:edges} is only useful when $k \ll \deg(v)^2$.
This contributes to the necessary upper bound on $k$ in Theorems~\ref{theo:DKPS} and~\ref{theo:DKPSCorrespondence}.
We remark that Fact~\ref{fact:edges} in conjunction with Theorem~\ref{theo:KST} is sufficient to prove Theorem~\ref{theo:main_theo} when $C = 4+\eps$ and $k \leq d^{\eps/200}$.
The following result allows us to provide a sufficient bound on $e(G[N(v)])$ for larger values of $k$ when $C=8$.

\begin{prop}\label{prop:KST_multiple}
    Let $G$ be an $n$-vertex $(k, K_{s, t})$-sparse graph.
    For $k^\star = \frac{(s+t)^2\,n^{s+t - 1}}{2\,s^s\,t^t}$, we have
    \[|E(G)| \leq \left\{\begin{array}{cc}
        4\,n^{2-1/(st)} &  \text{ if } k \leq k^\star;\vspace{5pt} \\
        2\,s^{1/t}\,t^{1/s}\,k^{1/st}\,n^{2-1/s - 1/t} &  \text{ if } k > k^\star.
    \end{array}\right.\]
\end{prop}

We remark that we always apply the above proposition with $k \leq k^\star$, however, we include the case $k > k^\star$ for completeness.
Alon considered the case that $s,\, t = \Theta(1)$ in order to prove a supersaturation-type result.
In fact, for this range of $s$ and $t$, the contrapositive of \cite[Corollary 2.1]{alon2002testing} implies Proposition~\ref{prop:KST_multiple} for $k = \Theta(n^{s+t})$.
Our proof follows a similar strategy to that of \cite[Corollary 2.1]{alon2002testing} (see \S\ref{section: proof of KST multiple} for the details).

Once we are able to prove concentration, it remains to show that a ``good'' partial coloring $\phi$ exists.
To do so, we employ the symmetric version of the Lov\'asz Local Lemma.

\begin{theo}[{Lov\'asz Local Lemma; \cite[\S4]{MolloyReed}}]\label{LLL}
    Let $A_1$, $A_2$, \ldots, $A_n$ be events in a probability space. Suppose there exists $p \in [0, 1)$ such that for all $1 \leq i \leq n$ we have $\P[A_i] \leq p$. Further suppose that each $A_i$ is mutually independent from all but at most $\dlll$ other events $A_j$, $j\neq i$ for some $\dlll \in \N$. If $4p\dlll \leq 1$, then with positive probability none of the events $A_1$, \ldots, $A_n$ occur.
\end{theo}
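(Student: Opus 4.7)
The plan is to prove the Lov\'asz Local Lemma by the classical induction of Erd\H{o}s and Lov\'asz. The crucial trick is that one cannot induct on $\P[\bigcap_i \overline{A_i}]$ directly; instead, I would first establish the strengthened conditional bound that for every $i \in \{1, \ldots, n\}$ and every $S \subseteq \{1, \ldots, n\} \setminus \{i\}$,
\[\P\!\left[A_i \,\Big|\, \bigcap_{j \in S} \overline{A_j}\right] \leq 2p. \qquad (\star)\]
Given $(\star)$, applying the chain rule to $\P[\bigcap_{i=1}^n \overline{A_i}] = \prod_{i=1}^n \P[\overline{A_i} \mid \bigcap_{j<i} \overline{A_j}]$ bounds the product below by $(1-2p)^n$, which is strictly positive because the hypothesis $4p\dlll \leq 1$ with $\dlll \geq 1$ forces $p \leq 1/4$.

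To prove $(\star)$ I would induct on $|S|$. The base case $S = \0$ is just $\P[A_i] \leq p \leq 2p$. For the inductive step, partition $S = S_1 \sqcup S_2$, where $S_1 \defeq \{j \in S : A_j \text{ is not independent of } A_i\}$ and $S_2 \defeq S \setminus S_1$, so that $|S_1| \leq \dlll$ by hypothesis. Writing the conditional probability as
\[\P\!\left[A_i \,\Big|\, \bigcap_{j \in S} \overline{A_j}\right] \,=\, \frac{\P\!\left[A_i \cap \bigcap_{j \in S_1} \overline{A_j} \,\Big|\, \bigcap_{j \in S_2} \overline{A_j}\right]}{\P\!\left[\bigcap_{j \in S_1} \overline{A_j} \,\Big|\, \bigcap_{j \in S_2} \overline{A_j}\right]},\]
I would upper-bound the numerator by $\P[A_i \mid \bigcap_{j \in S_2} \overline{A_j}] = \P[A_i] \leq p$, using that $A_i$ is mutually independent from the events indexed by $S_2$. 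For the denominator, a union bound together with the inductive hypothesis applied to the strictly smaller sets $S \setminus \{j\}$ yields
\[\P\!\left[\bigcap_{j \in S_1} \overline{A_j} \,\Big|\, \bigcap_{j \in S_2} \overline{A_j}\right] \,\geq\, 1 - \sum_{j \in S_1}\P\!\left[A_j \,\Big|\, \bigcap_{k \in S_2} \overline{A_k}\right] \,\geq\, 1 - 2p\,\dlll \,\geq\, \tfrac{1}{2},\]
where the last inequality is exactly where the hypothesis $4p\dlll \leq 1$ is consumed. Combining the two estimates gives $(\star)$.

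The main obstacle is essentially this reframing: once one sees that the right object to control inductively is the conditional probability $\P[A_i \mid \bigcap_{S} \overline{A_j}]$ and that the natural splitting of the conditioning set is along the independence relation, the argument is routine. The only remaining care is at the boundary cases: if $S_1 = \0$ the fraction collapses to $\P[A_i \mid \bigcap_{S_2} \overline{A_j}] = \P[A_i] \leq p$ by mutual independence, and if $S_2 = \0$ the conditioning on an empty intersection is interpreted as no conditioning. This also reveals the role of the constant $4$ in $4p\dlll \leq 1$ as precisely the slack needed to force the denominator past $1/2$ and thereby close the induction.
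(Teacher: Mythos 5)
The paper offers no proof of this statement (it is quoted from Molloy and Reed), and your argument is the classical Erd\H{o}s--Lov\'asz induction that the cited source uses, so in overall structure there is nothing to contrast. There is, however, one genuine (though easily repaired) gap in your write-up: you define $S_1 \defeq \{j \in S : A_j \text{ is not independent of } A_i\}$, i.e., via \emph{pairwise} independence, and then bound the numerator by asserting $\P\bigl[A_i \mid \bigcap_{j \in S_2} \overline{A_j}\bigr] = \P[A_i]$ ``using that $A_i$ is mutually independent from the events indexed by $S_2$.'' That does not follow from your definition. The hypothesis only supplies, for each $i$, a set $D_i$ of at most $\dlll$ indices such that $A_i$ is mutually independent from the whole collection $\{A_j : j \notin D_i \cup \{i\}\}$; pairwise independence of $A_i$ with each $A_j$, $j \in S_2$, gives no control over Boolean combinations such as $\bigcap_{j \in S_2} \overline{A_j}$, and your $S_2$ may well contain indices of $D_i$ (namely those $j \in D_i$ that happen to be pairwise independent of $A_i$). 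The fix is simply to split along the dependency set from the hypothesis: set $S_1 \defeq S \cap D_i$ and $S_2 \defeq S \setminus D_i$. Then $|S_1| \leq \dlll$ still holds, the numerator computation is legitimate, and the rest of your argument --- the union bound on the denominator via the inductive hypothesis applied to the pairs $(j, S_2)$ with $|S_2| < |S|$, the estimate $1 - 2p\,\dlll \geq 1/2$ consuming $4p\dlll \leq 1$, and the chain-rule product bounded below by $(1-2p)^n$ --- goes through verbatim.

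Two smaller points worth making explicit. First, the chain rule and all the conditional probabilities you write down are only defined because the same induction shows $\P\bigl[\bigcap_{j \in T} \overline{A_j}\bigr] > 0$ for every $T$ (the denominator bound $\geq 1/2$ propagates positivity); this should be stated rather than left implicit. Second, your final positivity step assumes $\dlll \geq 1$ to get $p \leq 1/4$; if the convention allows $\dlll = 0$ the bound $(1-2p)^n$ may be useless, but in that degenerate case all events are mutually independent of each other and the conclusion follows directly from $\prod_i (1 - \P[A_i]) > 0$ since $p < 1$, so a one-line remark disposes of it.
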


In their proof of Theorem~\ref{theo:our_old_result}, the first two authors and Bernshteyn provided a framework for applying this version of the \hyperref[algorithm: wcp]{Wasteful Coloring Procedure}, i.e., the version for correspondence coloring in the color-degree setting.
Many of the arguments, particularly certain computational ones, can therefore be applied in a ``black box'' manner.
The main differences in the statements of Theorems~\ref{theo:our_old_result} and \ref{theo:main_theo} are the structural constraints on the cover graph $H$, and the sizes of the lists defined by $L$.
It turns out these assumptions appear in two key steps of the proof: first, the computation of $\E\left[\overline{\deg}_\mathcal{H'}(v)\right]$; second, the inductive application of the procedure, i.e., ensuring the list sizes are not too small when we reach the stage where we may apply Proposition~\ref{prop:final_blow}. 
The heart of our argument lies in the proofs of these two steps, which appear in \S\ref{sec: proof of expectation} and \S\ref{sec:main_proof}, respectively.

We conclude this subsection with a discussion on the bound $k \leq d^{(s+t)/10}$ in Theorem~\ref{theo:main_theo}.
When applying Proposition~\ref{prop:KST_multiple}, we require $k$ to be asymptotically smaller than $d^{s+t}$.
If not, it could be the case that $N_H(c)$ induces a very dense graph in $H$.
The events $\set{c' \in V(H')},\,\set{c \in V(H')}$ for $c' \in N_H(c)$ would then be strongly correlated.
As a result, the bound on $\E[\overline{\deg}_{H'}(v)]$ would not be sufficient to apply the nibble method with the desired list size $|L(v)|$.
We note that we require $k \ll d_i^{s+t}$, where $d_i$ is the relevant parameter during the $i$-th application of the method.
For $C > 4$, we are able to compute a lower bound  $d_i \geq d^{\gamma}$ for some appropriate $\gamma \defeq \gamma(C) > 0$.
As a result, it is enough to have $k \leq d^{r(s+t)}$ for some positive $r \defeq r(\gamma) < 1$.
We set $r = 1/10$ and $C = 8$ purely for computational ease.
We remark that we may decrease $C$ by decreasing $r$ to get a better bound on the chromatic number, however, we must have $C > 4$.

\section{The Wasteful Coloring Procedure}\label{sec:wcp}

This section will be split into three subsections.
In the first, we provide a description of our coloring procedure and state a key lemma at the heart of our proof of Theorem~\ref{theo:main_theo}.
In the second, we prove this result modulo a technical lemma, which we prove in the final subsection.

\subsection{Algorithm overview}\label{subsec: alg overview}

In this section, we formally describe our coloring procedure and introduce a key lemma at the heart of our proof of Theorem~\ref{theo:main_theo}.
Roughly speaking, the lemma below will allow us to, in \S\ref{sec:main_proof}, inductively color our graph until a point at which Proposition \ref{prop:final_blow} can be applied. 
As mentioned earlier, we will utilize the \hyperref[algorithm: wcp]{Wasteful Coloring Procedure} (described formally in Algorithm~\ref{algorithm: wcp}) in order to prove the lemma.
We first introduce some notation.

Recall from \S\ref{sec: proof overview} for a fixed vertex $v \in V(G)$, its average color-degree is
\[ \overline{\deg}_\mathcal{H}(v) = \frac{1}{|L(v)|}\sum_{c\in L(v)} \deg_H(c),\]
and the list of available colors for $v$ with respect to a partial coloring $\phi$ is as follows: 
\[ L_{\phi}(v) = \set{c \in L(v) \,:\, N_H(c)\cap \im(\phi) = \0},\]
where $\im(\phi)$ is the image of $\phi$, i.e., the colors assigned to colored vertices in $G$.
The main component of the proof of Theorem~\ref{theo:main_theo} is the following lemma, which shows that under certain conditions on $G$ and its correspondence cover, there exists a partial coloring such that the uncolored graph has desirable properties.

\begin{Lemma}\label{lemma:iteration} 
    There are $\tilde{d} \in \N$, $\tilde{\alpha} > 0$ such that the following holds. Suppose $\eta\in \R$, $d$, $\ell$, $s$, $t \in \N$, and $k \in \R$ satisfy:
    \begin{enumerate}[label=\ep{\normalfont L\arabic*}]

        \item\label{item:d_large_3.1} $d$ is sufficiently large: $d \geq \tilde{d}$,
        \item\label{item: k not too large} $k$ is not too large: $1/2 \leq k \leq d^{(s+t)/5}$,
        \item\label{item:ell} $\ell$ is bounded below and above in terms of $d$: $4\eta\, d < \ell < 100d$,
        \item\label{item:t} $s$ and $t$ are bounded in terms of $d$: $1 \leq t \leq s$ and $st \leq \dfrac{\tilde \alpha\log d}{\log \log d}$, and
        \item\label{item:eta} $\eta$ is bounded below and above in terms of $d$, $k$, $s$, and $t$: $\dfrac{1}{\log^5(d)} < \eta < \dfrac{1}{\log (dk^{-1/(s+t)})}.$ 
    \end{enumerate}
    Let $G$ be a graph with a correspondence cover $\mathcal{H} = (L, H)$ such that for some $\beta$ satisfying $d^{-1/(200st)} \leq \beta \leq 1/10$, 
    \begin{enumerate}[label=\ep{\normalfont L\arabic*},resume]
        \item \label{item: sparsity} $H$ is $(k,K_{s,t})$-locally-sparse,
        \item\label{item:Delta} $\Delta(H) \leq 2d$,
        \item\label{item:list_assumption} the list sizes are roughly between $\ell/2$ and $\ell$: $(1-\beta)\ell/2 \,\leq\, |L(v)| \leq (1+\beta)\ell$ for all $v \in V(G)$,
        \item\label{item:averaged} average color-degrees are smaller for vertices with smaller lists of colors: \[\overline{\deg}_\mathcal{H}(v) \,\leq\, \left(2 - (1 - \beta)\frac{\ell}{|L(v)|}\right)d \quad \text{for all } v\in V(G).\]
    \end{enumerate}
    Then there exists a proper partial $\mathcal{H}$-coloring $\phi$ of $G$ and 
    an assignment of subsets $L'(v) \subseteq L_\phi(v)$ to each $v \in V(G) \setminus \dom(\phi)$ with the following properties. Let
    \[
        G' \defeq G\left[V(G)\setminus \dom(\phi)\right] \qquad \text{and} \qquad H' \defeq H\left[\textstyle\bigcup_{v \in V(G')} L'(v)\right].
    \]
    Define the following quantities:
    \[
        \begin{aligned}[c]
            \keep &\defeq \left(1 - \frac{\eta}{\ell}\right)^{2d}, \\
        \uncolor &\defeq \left(1  - \frac{\eta}{\ell}\right)^{\keep\,\ell/2},
        \end{aligned}
        \qquad
        \begin{aligned}[c]
            \ell' &\defeq \keep\, \ell, \\
            d' &\defeq \keep\, \uncolor\, d,
        \end{aligned} \qquad \beta' \defeq (1+36\eta)\beta.
    \]
    Let $\mathcal{H}' \defeq (L', H')$, so $\mathcal{H}'$ is a correspondence cover of $G'$.
    Then for all $v \in V(G')$:
    \begin{enumerate}[label=\ep{\normalfont\roman*}]
        \item\label{item:I} $|L'(v)| \,\leq\, (1+\beta')\ell'$,
        
        \smallskip
        
        \item\label{item:II} $|L'(v)| \geq (1-\beta')\ell'/2$,
        
        \smallskip
        
        \item\label{item:III} $\Delta(H') \leq 2d'$,
        
        \smallskip
        
        \item\label{item:IV} $\overline{\deg}_{\mathcal{H}'}(v) \leq \left(2 - (1 - \beta')\frac{\ell'}{|L'(v)|}\right)d'.$

        \item\label{item: V} $H'$ is $(k, K_{s,t})$-locally-sparse.
    \end{enumerate}
\end{Lemma}

Note that condition \ref{item: V} holds automatically as $H' \subseteq H$. Also note that conditions \ref{item:I}--\ref{item:IV}  are similar to the conditions \ref{item: sparsity}--\ref{item:averaged}, except that the former uses $\beta', \eta', \ell'$. This will help us to apply Lemma~\ref{lemma:iteration} iteratively in \S\ref{sec:main_proof} to prove Theorem~\ref{theo:main_theo}. 
For the rest of this section, we define $\keep,\,\uncolor,\, d',\,\ell',\,\beta'$ as above.

Let us now describe the \hyperref[algorithm: wcp]{Wasteful Coloring Procedure} as laid out by the first two authors and Bernshteyn in \cite{anderson2022coloring} for correspondence colorings (the color-degree version for list coloring appeared in \cite{alon2020palette}).

\vspace{0.3cm}
\begin{breakablealgorithm}
\caption{Wasteful Coloring Procedure}\label{algorithm: wcp}
\begin{flushleft}
\textbf{Input}: A graph $G$ with a correspondence cover $\mathcal{H} = (L,H)$ and parameters $\eta \in [0,1]$ and $d$, $\ell> 0$. \\
\textbf{Output}: A proper partial $\mathcal{H}$-coloring $\phi$ and subsets $L'(v) \subseteq L_\phi(v)$ for all $v \in V(G)$.
\end{flushleft}
\begin{enumerate}[itemsep = .2cm, label = {\arabic*.}]
    \item Sample $A \subseteq V(H)$ as follows: for each $c \in V(H)$, include $c \in A$ independently with probability $\eta/\ell$.
    Call $A$ the set of \emphd{activated} colors, and let $A(v) \defeq L(v) \cap A$ for each $v \in V(G)$.

    \item Let $\{\eq(c) \,:\, c \in V(H)\}$ be a family of independent random variables with distribution
    \[\eq(c) \sim \Ber\left(\frac{\keep}{\left(1 - \eta/\ell\right)^{\deg_H(c)}}\right).\]
    (We discuss why this is well defined below). Call $\eq(c)$ the \emphd{equalizing coin flip} for $c$.
    
    \item Sample $K \subseteq V(H)$ as follows: for each $c \in V(H)$, include $c \in K$ if $\eq(c) = 1$ and $N_H(c) \cap A = \0$.
    Call $K$ the set of \emphd{kept} colors, and $V(H) \setminus K$ the \emphd{removed} colors. For each $v \in V(G)$, let $K(v) \defeq L(v) \cap K$.

    \item Construct $\phi : V(G) \pto V(H)$ as follows: if $A(v) \cap K(v) \neq \0$, set $\phi(v)$ to any color in $A(v) \cap K(v)$. Otherwise set $\phi(v) = \blank$.
    
    \item Call $v \in V(G)$  \emphdef{uncolored} if $\phi(v) = \blank$, and define 
    \[U \,\defeq\, \left\{c \in V(H)\,:\, \phi\left(L^{-1}(c)\right) = \blank\right\}.\]
    \ep{Recall that $L^{-1}(c)$ denotes the underlying vertex of $c$ in $G$.}

    \item\label{step: define L'} For each vertex $v \in V(G)$, let
    \[L'(v) \,\defeq\, \left\{c\in K(v)\,:\, |N_H(c) \cap K \cap U| \leq 2\,d'\right\}.\]
\end{enumerate}
\end{breakablealgorithm}
\vspace{0.3cm}

Note that if $G$ and $H$ satisfy assumption \ref{item:Delta} of Lemma \ref{lemma:iteration}, i.e. $\deg_H(c) \leq 2d$, then
\[ 0 \leq \frac{\keep}{(1 - \eta/ \ell)^{\deg_H(c)}} = \left( 1 - \frac{\eta}{\ell}\right)^{2d - \deg_H(c)} \leq 1\]
and hence the equalizing coin flips are well defined. 
Furthermore, if we assume that the assumptions of Lemma \ref{lemma:iteration} hold on the input graph of the \hyperref[algorithm: wcp]{Wasteful Coloring Procedure}, then $\keep$ is precisely the probability that a color $c \in V(H)$ is kept, and $\uncolor$ is roughly an upper bound on the probability that a vertex $v \in V(G)$ is uncolored.

\subsection{Proof of Lemma \ref{lemma:iteration}}\label{sec: iteration}
In this section, we prove Lemma \ref{lemma:iteration} under the assumption that a subsequently introduced lemma, Lemma \ref{expectationKeptUncolor}, is true. Indeed, Lemma \ref{expectationKeptUncolor} is the key lemma in our argument; it is the part of the proof that relies on the local sparsity condition; and the bulk of the work in proving Lemma \ref{lemma:iteration} comes from proving Lemma \ref{expectationKeptUncolor}, which we will do in \S\ref{sec: proof of expectation}. 

The rest of the proof of Lemma \ref{lemma:iteration} follows the strategy employed by the first two authors and Bernshteyn to prove \cite[Lemma 3.1]{anderson2022coloring}, which is similar to Lemma \ref{lemma:iteration}. 
Indeed, the assumptions of \cite[Lemma 3.1]{anderson2022coloring} and Lemma \ref{lemma:iteration} differ in only four ways: 
\begin{enumerate}
    \item the assumption that $H$ is $K_{1, s,t}$-free is replaced by the weaker assumption that $H$ is $(k, K_{s, t})$-locally-sparse,
    \item the assumption that $\eta < \frac{1}{\log d}$ is replaced by the assumption that $\eta < \frac{1}{\log\left(dk^{-1/(s+t)}\right)}$,
    \item the bounds $s \leq d^{1/10}$ and $t \leq \dfrac{\tilde \alpha\log d}{\log \log d}$ are replaced by $st \leq \dfrac{\tilde \alpha\log d}{\log \log d}$, and
    \item the assumption $d^{-1/(200t)} \leq \beta \leq 1/10$ is replaced by $d^{-1/(200st)} \leq \beta \leq 1/10$.
\end{enumerate}
However, several of the lemmas used in the proof of \cite[Lemma 3.1]{anderson2022coloring} do not rely on either of these assumptions, and thus the same proofs can be used for our lemmas here.
Furthermore, as our upper bound on $st$ matches that of $t$ in \cite[Lemma 3.1]{anderson2022coloring}, the proofs requiring the lower bound on $\beta$ follow by replacing $t$ with $st$ at the relevant steps.
This essentially allows us to use these proofs in a ``black box'' manner towards proving Lemma \ref{lemma:iteration}. We therefore follow the strategy of the proof of \cite[Lemma 3.1]{anderson2022coloring}, and indeed, this section follows similar notation to that of \cite[\S4]{anderson2022coloring}.

Assume parameters $\eta$, $d$, $\ell$, $s$, $t$, $k$, $\beta$, and a graph $G$ with correspondence cover $\mathcal{H} = (L, H)$ satisfy the assumptions of Lemma~\ref{lemma:iteration}.  We may assume that $\Delta(G) \leq 2(1+\beta) \ell d$ by removing the edges of $G$ whose corresponding matchings in $H$ are empty. Let the quantities $\keep$, $\uncolor$, $d'$, $\ell'$, and $\beta'$ be defined as in the statement of Lemma~\ref{lemma:iteration}. Suppose we have carried out the \hyperref[algorithm: wcp]{Wasteful Coloring Procedure} with these $G$ and $\mathcal{H}$. As in the statement of Lemma~\ref{lemma:iteration}, we let
    \[
        G' \defeq G\left[V(G)\setminus \dom(\phi)\right],  \qquad H' \defeq H\left[\textstyle\bigcup_{v \in V(G')} L'(v)\right], \qquad \text{and} \qquad \mathcal{H}' \defeq (L', H').
    \]
    For each $v \in V(G)$, we define the following quantities:
    \[
        \ell(v) \defeq |L(v)| \qquad \text{and} \qquad \overline{\deg}(v) \defeq \overline{\deg}_\mathcal{H}(v),
    \]
    as well as the following random variables:
    \[
        k(v) \defeq |K(v)|, \qquad  \ell'(v) \defeq |L'(v)|, \qquad \text{and} \qquad \overline{d}(v) \defeq \frac{1}{\ell'(v)}\sum_{c\in L'(v)}|N_H(c) \cap V(H')|.
    \]
    By definition, if $v \in V(G')$, then $\overline{d}(v) = \overline{\deg}_{\mathcal{H}'}(v)$. Our goal is to verify that statements \ref{item:I}--\ref{item:IV} in Lemma \ref{lemma:iteration} hold for every $v \in V(G')$ with positive probability. We follow an idea of Pettie and Su \cite{PS15} (which was also used in \cite{alon2020palette, anderson2022coloring}) by defining the following auxiliary quantities:
    \begin{align*}
        \lambda(v) &\defeq \frac{\ell(v)}{\ell}, & \lambda'(v)&\defeq \frac{\ell'(v)}{\ell'}, \\
        \delta(v) &\defeq \lambda(v)\,\overline{\deg}_{\mathcal{H}}(v) + (1-\lambda(v))\,2d, & \delta'(v) &\defeq \lambda'(v)\,\overline{d}(v) + (1-\lambda'(v))\,2d'.
    \end{align*}
    Note that, by \ref{item:list_assumption}, we have $(1-\beta)/2 \leq \lambda(v) \leq 1+ \beta$. When $\lambda(v) \leq 1$, we can think of $\delta(v)$ as what the average color-degree of $v$ would become if we added $\ell - \ell(v)$ colors of degree $2d$ to $L(v)$. \ep{We remark that in both \cite{PS15} and \cite{alon2020palette}, the value $\lambda(v)$ is artificially capped at $1$. However, as noted by the first two authors and Bernshteyn in \cite{anderson2022coloring}, it turns out that there is no harm in allowing $\lambda(v)$ to exceed $1$, which moreover makes the analysis simpler.} The upper bound on $\overline{\deg}(v)$ given by \ref{item:averaged} implies that
    \begin{align}\label{eqn:avg_delta}
        \delta(v) \,\leq\, (1+\beta)d.
    \end{align}
    As demonstrated in \cite[Lemma 4.1]{anderson2022coloring}, an upper bound on $\delta'(v)$ suffices to derive statements \ref{item:II}--\ref{item:IV} in Lemma \ref{lemma:iteration}:

\begin{Lemma}\label{lemma:deltaprime}
    If $\delta'(v) \leq (1 + \beta')d'$, then conditions \ref{item:II}--\ref{item:IV} of Lemma \ref{lemma:iteration} are satisfied.
\end{Lemma}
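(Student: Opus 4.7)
The plan is to extract all three conditions from the single inequality $\delta'(v) \leq (1+\beta')d'$, using the fact that $\delta'(v)$ is a convex combination of $\overline{d}(v)$ and $2d'$ with weights $\lambda'(v)$ and $1-\lambda'(v)$. The algebra is short; the real content is that bundling the list-size deficit and the average color-degree into one quantity allows both to be controlled simultaneously.

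First I would dispose of \ref{item:III}, which in fact follows purely from Step~\ref{step: define L'} of the \hyperref[algorithm: wcp]{Wasteful Coloring Procedure} and does not require the hypothesis. By construction $L'(v) \subseteq K(v)$ contains only colors $c$ with $|N_H(c) \cap K \cap U| \leq 2d'$. Since every $c \in V(H') = \bigcup_{v' \in V(G')} L'(v')$ lies in $L'(v')$ for some uncolored vertex $v' = L^{-1}(c)$, we have $V(H') \subseteq K \cap U$, and therefore $\deg_{H'}(c) = |N_H(c) \cap V(H')| \leq |N_H(c) \cap K \cap U| \leq 2d'$ for every $c \in V(H')$, giving $\Delta(H') \leq 2d'$.

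Next I would verify \ref{item:II} by splitting on whether $\lambda'(v) \geq 1$. If $\lambda'(v) \geq 1$, then $\ell'(v) \geq \ell' \geq (1-\beta')\ell'/2$ is automatic. Otherwise $1-\lambda'(v) > 0$, and since $\overline{d}(v) \geq 0$ we get
\[(1-\lambda'(v))\,2d' \;\leq\; \lambda'(v)\,\overline{d}(v) + (1-\lambda'(v))\,2d' \;=\; \delta'(v) \;\leq\; (1+\beta')d',\]
which rearranges to $\lambda'(v) \geq (1-\beta')/2$, i.e., $\ell'(v) \geq (1-\beta')\ell'/2$. Under the assumptions of Lemma~\ref{lemma:iteration} (namely $\beta \leq 1/10$ and $\eta$ small in $d$) we have $\beta' < 1$, so \ref{item:II} yields $\lambda'(v) > 0$. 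Dividing $\lambda'(v)\overline{d}(v) + (1-\lambda'(v))2d' \leq (1+\beta')d'$ by $\lambda'(v)$ then gives
\[\overline{d}(v) \;\leq\; \left(2 - \frac{1-\beta'}{\lambda'(v)}\right)d' \;=\; \left(2 - (1-\beta')\frac{\ell'}{\ell'(v)}\right)d',\]
and because $v \in V(G')$ we have $\overline{d}(v) = \overline{\deg}_{\mathcal{H}'}(v)$, establishing \ref{item:IV}.

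I do not expect any genuine obstacle: this lemma is a bookkeeping statement that repackages the scalar bound $\delta'(v) \leq (1+\beta')d'$ into the exact form needed for the inductive application of Lemma~\ref{lemma:iteration} in \S\ref{sec:main_proof}. The only conceptual subtlety is recognizing that \ref{item:III} falls out of the algorithm's design rather than from the hypothesis, which is essentially the design choice (highlighted by the name \emph{Wasteful}) that pre-emptively discards colors of too-high residual degree so that $\Delta(H')$ is controlled deterministically.
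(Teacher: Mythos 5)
Your proof is correct and is essentially the argument the paper relies on (the paper itself defers to the identical Lemma~4.1 of \cite{anderson2022coloring}, which is exactly this bookkeeping): condition \ref{item:III} follows deterministically from Step~\hyperref[step: define L']{6} of the procedure since $V(H')\subseteq K\cap U$, and conditions \ref{item:II} and \ref{item:IV} follow from the convex-combination form of $\delta'(v)$ by the same rearrangements you give. No gaps; the observation that $\Delta(H')\leq 2d'$ needs no hypothesis is consistent with the paper's framework.
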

\begin{proof}
    The proof is the same as the proof of \cite[Lemma 4.1]{anderson2022coloring}.
\end{proof}

Therefore, to prove Lemma \ref{lemma:iteration}, it suffices to show that, with positive probability, the outcome of the \hyperref[algorithm: wcp]{Wasteful Coloring Procedure} satisfies $\delta'(v) \leq (1 + \beta')d'$ and  $\ell'(v) \leq (1+\beta')\ell'$ for all $v \in V(G)$. We shall now prove some intermediate results. Before we do so, consider the following inequality, which will be useful for proving certain bounds and follows for small enough $\tilde\alpha$:

\begin{align}\label{etabeta}
    \eta\beta \,\geq\, d^{-1/(200st)}/\log^5d \,\geq\, d^{-1/(100st)}.
\end{align}

In the next lemma, we show that $k(v)$ is concentrated around its expected value, which we then use to show condition \ref{item:I} in Lemma \ref{lemma:iteration} is satisfied with high probability.
The proof follows by a rather standard application of Talagrand's inequality (a powerful concentration tool).
We omit the proof here as it is identical to the one of \cite[Lemma 4.2]{anderson2022coloring}.

\begin{Lemma}\label{listConcentration}
    $\P[|k(v) -\keep\,\ell(v)| \geq \eta\beta\,\keep\,\ell(v)] \leq \exp\left(-d^{1/10}\right)$.
\end{Lemma}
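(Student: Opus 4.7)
The plan is to prove concentration of $k(v)$ via Talagrand's inequality, applied to $k(v)$ viewed as a function of two independent families of Bernoulli trials in the \hyperref[algorithm: wcp]{Wasteful Coloring Procedure}: the activation coin flips (which determine $A$) and the equalizing coin flips $\{\eq(c) : c \in V(H)\}$. The first observation is that $\E[k(v)] = \keep \cdot \ell(v)$ exactly, since by the design of the equalizing coin flip, each $c \in L(v)$ lies in $K$ with probability $(\keep/(1 - \eta/\ell)^{\deg_H(c)}) \cdot (1 - \eta/\ell)^{\deg_H(c)} = \keep$, using independence of $\eq(c)$ from the activation flips of $N_H(c)$.

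Next I would verify the two hypotheses required by Talagrand's inequality. For the \emph{Lipschitz property}, I would show that flipping a single trial changes $k(v)$ by at most $1$. Since $L(v)$ is independent in $H$ and the edges between $L(v)$ and any other list $L(u)$ form a matching (properties \ref{dp:list_independent} and \ref{dp:matching}), any color $c \in V(H)$ has at most one neighbor in $L(v)$; hence toggling the activation of $c$ changes the kept status of at most one color in $L(v)$. Similarly, toggling the equalizing flip of $c$ affects $k(v)$ only if $c \in L(v)$, in which case $k(v)$ changes by $1$. For \emph{certifiability}, whenever $k(v) \geq s$ the event is witnessed by at most $(1 + 2d)\,s$ trials: for each of the $\geq s$ kept colors $c \in K(v)$, we exhibit $\eq(c) = 1$ together with the $\leq \Delta(H) \leq 2d$ activation flips on $N_H(c)$, all of which must be $0$.

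Plugging these parameters into Talagrand's inequality then yields concentration of $k(v)$ around $\keep \cdot \ell(v)$ with deviation at most $\eta\beta\,\keep\,\ell(v)$ and failure probability at most $\exp(-d^{1/10})$. This last step is a routine numerical estimate, relying on the bounds \ref{item:d_large_3.1}--\ref{item:eta} on $\ell,\eta,\beta,t,d$, and in particular on the inequality $\eta\beta \geq d^{-1/(100t)}$ from \eqref{etabeta}, which ensures the deviation $\eta\beta\,\keep\,\ell(v)$ comfortably dominates the additive $O(\sqrt{d\cdot \keep\ell(v)})$ correction term in Talagrand's bound. The setup is essentially the same as in \cite[Lemma 4.2]{anderson2022coloring}, so the proof is literally identical and I would refer the reader there; the only delicate point, worth double-checking, is the Lipschitz constant, which crucially uses the matching structure of the correspondence cover (property~\ref{dp:matching}). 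The main obstacle, such as it is, is not conceptual but bookkeeping: tracking the trials that $k(v)$ depends on cleanly enough to exhibit the Lipschitz and certifiability constants without an extra factor of $d$ would break the application.
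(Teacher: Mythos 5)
Your expectation computation ($\E[k(v)]=\keep\,\ell(v)$ exactly) and your Lipschitz bound via the matching property \ref{dp:matching} are both correct, and the Talagrand route is indeed the one the paper takes (it simply defers to \cite[Lemma 4.2]{anderson2022coloring}). The genuine gap is exactly the point you flagged at the end: your certificate for the event $\{k(v)\geq s\}$ uses $(1+2d)s$ trials, i.e.\ certifiability parameter $r=\Theta(d)$, and with that the application of Talagrand collapses. Quantitatively, the additive correction term in Talagrand's inequality is of order $\sqrt{r\,\E[k(v)]}=\Theta\bigl(\sqrt{d\,\keep\,\ell(v)}\bigr)$, which is $\Theta(d\sqrt{\eta})$ when $\ell(v)=\Theta(\eta d)$ and $\Theta(d)$ when $\ell(v)=\Theta(d)$, whereas the target deviation is $\eta\beta\,\keep\,\ell(v)=O(\eta\beta\, d)=O(\eta d)$ (recall $\eta\leq 1/\log(dk^{-1/(s+t)})$ and $\beta\leq 1/10$). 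Since $\eta\beta\ll\sqrt{\eta}$, the allowed deviation is \emph{dominated by} the correction term rather than dominating it, so the inequality gives no bound at all; and even ignoring the correction, the exponent $t^2/(8r\,\E[k(v)])$ would be $O\bigl(1/(\eta\log^2 d)\bigr)=O(\log^3 d)$, nowhere near the required $d^{1/10}$. So the sentence claiming the deviation ``comfortably dominates'' the $O\bigl(\sqrt{d\,\keep\,\ell(v)}\bigr)$ correction has the comparison backwards, and the extra factor of $d$ you worried about is present in your own certificate.

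The standard repair, and what the cited argument does in substance, is to apply Talagrand to the complementary variable $\ell(v)-k(v)$, the number of colors of $L(v)$ that are \emph{not} kept. This is $1$-Lipschitz by the same matching argument, and it is certifiable with $r=1$: each non-kept $c\in L(v)$ is witnessed by a single trial, namely $\eq(c)=0$ or the activation flip of one activated neighbor of $c$. The correction term is then $O\bigl(\sqrt{\ell(v)}\bigr)=O(\sqrt{d})$, while by \eqref{etabeta} and assumptions \ref{item:ell}, \ref{item:eta} the deviation satisfies $\eta\beta\,\keep\,\ell(v)\geq d^{-1/100}\cdot\Omega(\eta d)=\Omega\bigl(d^{0.99}/\log^5 d\bigr)$, so Talagrand yields failure probability $\exp\bigl(-\Omega(d^{0.9})\bigr)\leq\exp\bigl(-d^{1/10}\bigr)$; concentration of $\ell(v)-k(v)$ transfers to $k(v)$ since the two differ by the constant $\ell(v)$. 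With this change of variable your outline goes through.
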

\begin{proof}
The proof is the same as the proof of \cite[Lemma 4.2]{anderson2022coloring}.
\end{proof}

Since $\ell'(v)\leq k(v)$, we have the following with probability at least $1-\exp\left(-d^{1/10}\right)$:
\begin{align*}
    \ell'(v) &\leq (1+\eta\beta)\,\keep\,\ell(v) \\
    &\leq (1+\eta\beta)\,(1+\beta)\,\keep\,\ell \\
    &\leq (1+ \beta')\,\ell'.
\end{align*}
This implies that condition \ref{item:I} is met with probability at least $1 - \exp(-d^{1/10})$. 

In order to analyze the average color-degrees in the correspondence cover $\mathcal{H}'$, we define:
\begin{align*}
    \keptedges &\defeq \{cc'\in E(H)\,:\, c\in L(v),\ c'\in N_H(c), \text{ and } c,c'\in K\}, \\
    \uncoloredges &\defeq \{cc'\in E(H)\,:\, c\in L(v),\ c'\in N_H(c), \text{ and }
    c' \in U\}, \\
    \nd &\defeq \frac{|\keptedges \cap \uncoloredges|}{k(v)}.
\end{align*}
Note that $\nd$ is what the average color-degree of $v$ would be if instead of removing colors with too many neighbors on step~\hyperref[step: define L']{6} of the \hyperref[algorithm: wcp]{Wasteful Coloring Procedure}, we had just set $L'(v) = K(v)$.
The only place in our proof that relies on $H$ being $(k, K_{s,t})$-locally-sparse is the following lemma, which gives a bound on the expected value of $|\keptedges \cap \uncoloredges|$. 
As mentioned in \S\ref{sec: proof overview}, this lemma is one of the major differences between our proof and that of \cite{anderson2022coloring}.

\begin{Lemma}\label{expectationKeptUncolor}
$\E[|\keptedges\cap\uncoloredges|] \leq \keep^2\,\uncolor\,\ell(v)\,\overline{\deg}_{\mathcal{H}}(v)(1+6\eta\beta)$.
\end{Lemma}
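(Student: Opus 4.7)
The plan is to expand the expectation as a sum over ordered pairs and bound each summand. Specifically,
\[\E\!\left[|\keptedges \cap \uncoloredges|\right] \;=\; \sum_{c \in L(v)} \sum_{c' \in N_H(c)} \P\!\left[c \in K,\; c' \in K,\; L^{-1}(c') \in U\right],\]
and I aim to show that each summand is bounded by $\keep^2\,\uncolor \cdot (1-\eta/\ell)^{-|N_H(c) \cap N_H(c')|}$ times a multiplicative error of order $(1 + O(\eta\beta))$. The sparsity condition on $H$ is then used to control the contribution of $(1-\eta/\ell)^{-|N_H(c) \cap N_H(c')|}$ when summed over $c' \in N_H(c)$.

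First, since the equalizing coin flips $\{\eq(\cdot)\}$ are independent of $A$, the event $\{c \in K\}$ decomposes as $\{\eq(c)=1\} \cap \{N_H(c) \cap A = \emptyset\}$, and a direct computation gives
\[\P[c \in K,\, c' \in K] \;=\; \frac{\keep^2}{(1-\eta/\ell)^{\deg_H(c) + \deg_H(c')}} \cdot (1-\eta/\ell)^{|N_H(c) \cup N_H(c')|} \;=\; \keep^2 \cdot (1-\eta/\ell)^{-|N_H(c) \cap N_H(c')|}.\]
Second, I would bound the conditional probability $\P[L^{-1}(c') \in U \mid c,c' \in K] \leq \uncolor \cdot (1 + O(\eta\beta))$. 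Conditioning on $\{c, c' \in K\}$ forces the colors in $N_H(c) \cup N_H(c')$ out of $A$; the independence of $L(L^{-1}(c'))$ in $H$ (by \ref{dp:list_independent}) and the matching structure of the edges from $L(L^{-1}(c'))$ to $L(v)$ (by \ref{dp:matching}) ensure that the only color in $L(L^{-1}(c'))$ touched by the conditioning is $c'$ itself. The remaining manipulation — approximating this conditional probability by a product over the colors in $L(L^{-1}(c')) \setminus \{c'\}$ and invoking assumptions \ref{item:list_assumption} and \ref{item:averaged} — proceeds exactly as in \cite[\S 4]{anderson2022coloring} and can be used as a black box.

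The crucial new step is the summation over $c' \in N_H(c)$, which is where the local sparsity of $H$ enters. The identity
\[\sum_{c' \in N_H(c)} |N_H(c) \cap N_H(c')| \;=\; 2\, e(H[N_H(c)])\]
combined with Theorem~\ref{theo:KST_multiple} applied to the $(k, K_{s,t})$-sparse graph $H[N_H(c)]$ yields
\[e(H[N_H(c)]) \;\leq\; \tfrac{1}{2}\, s^{1/t}\, t^{1/s}\, k^{1/(st)}\, \deg_H(c)^{2 - 1/s - 1/t}.\]
Using $(1-\eta/\ell)^{-x} \leq 1 + 2(\eta/\ell) x$ whenever $(\eta/\ell) x \leq 1/2$, the inner sum is at most $\deg_H(c)(1 + \text{correction})$, where the correction is controlled by $(\eta/\ell)\,e(H[N_H(c)])/\deg_H(c)$. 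Plugging in $\ell > 4\eta d$, $\deg_H(c) \leq 2d$, $k \leq d^{(s+t)/5}$, and the constraints on $s, t$ from \ref{item:t}, this correction is at most $2\eta\beta$. Summing over $c \in L(v)$ and using $\sum_c \deg_H(c) = \ell(v)\,\overline{\deg}_\mathcal{H}(v)$ yields the stated bound.

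The main obstacle is the precise bookkeeping of these multiplicative error terms: the conditional-probability error and the sparsity-driven sum error must together fit inside the $(1 + 6\eta\beta)$ slack. This relies on the inequality $\eta\beta \geq d^{-1/(100t)}$ from \eqref{etabeta} and a careful verification that the quantity $s^{1/t} t^{1/s} k^{1/(st)} d^{1 - 1/s - 1/t}$ arising from Theorem~\ref{theo:KST_multiple} is suitably small compared to $d$; this is precisely where the restriction $k \leq d^{(s+t)/5}$ of \ref{item: k not too large} is used.
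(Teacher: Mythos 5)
Your proposal is correct and structurally it is the same proof as the paper's: you compute $\P[c,c'\in K]=\keep^2(1-\eta/\ell)^{-|N_H(c)\cap N_H(c')|}$ exactly as in Lemma~\ref{lemma: expectation}, you outsource the factor $\uncolor(1+O(\eta\beta))$ to the argument of \cite[Lemma 5.5]{anderson2022coloring} --- which is precisely the paper's Lemma~\ref{keptUncolorExpectation}; be aware that this step is a Harris/FKG correlation argument rather than an independence argument, since the kept statuses of the other colors of $L(L^{-1}(c'))$ are still affected by the conditioning through their neighbors in $N_H(c)\cup N_H(c')$ --- and you feed the sparsity in through Theorem~\ref{theo:KST_multiple}. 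Where you genuinely differ is in how the sparsity estimate is executed: the paper splits into the cases $\deg_H(c)<d^{1-1/(5t)}$ and $\deg_H(c)\ge d^{1-1/(5t)}$ and, in the latter, partitions $N_H(c)$ into $\Bad$ and $\Good$ colors, bounding $|\Bad|$ via the handshaking lemma; you instead linearize uniformly, $(1-\eta/\ell)^{-x}\le 1+2(\eta/\ell)x$, which is legitimate here because $x=|N_H(c)\cap N_H(c')|\le\Delta(H)\le 2d$ and $\ell>4\eta d$ give $(\eta/\ell)x\le 1/2$, and then use the identity $\sum_{c'\in N_H(c)}|N_H(c)\cap N_H(c')|=2e(H[N_H(c)])$ together with Alon's bound. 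This is a cleaner route and it does close: the correction per color is at most a constant times $(\eta/\ell)\,s^{1/t}t^{1/s}k^{1/(st)}\deg_H(c)^{1-1/s-1/t}$, which under \ref{item: k not too large}, \ref{item:t}, and \ref{item:ell} is $d^{-\Omega(1/t)}\ll d^{-1/(100t)}\le\eta\beta$ by \eqref{etabeta}, so together with the $(1+4\eta\beta)$ factor from the black box everything fits comfortably inside $1+6\eta\beta$. One bookkeeping caution: substituting $\deg_H(c)\le 2d$ into $\deg_H(c)^{1-1/s-1/t}$ is only valid when $1/s+1/t\le 1$, i.e.\ $t\ge 2$; for $t=1$ the exponent is negative and you should instead use $\deg_H(c)^{1-1/s-1/t}\le 1$ (or the trivial bound $e(H[N_H(c)])\le\binom{\deg_H(c)}{2}$, which is what the paper's low-degree case does in general), after which the same computation with $k\le d^{(s+t)/5}$ and $s\le d^{2/25}$ again gives a correction far below $\eta\beta$. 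This is a one-line fix, not a flaw in the approach.
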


\begin{proof}
See \S\ref{sec: proof of expectation}.
\end{proof}

The next lemma establishes that with high probability the quantity $|\keptedges\cap\uncoloredges|$ is not too large. The proof is identical to that of \cite[Lemma 4.4]{anderson2022coloring}, and thus we omit the technical details here. However, we describe the main idea of the proof below for the unfamiliar reader. 

The simplest approach to concentrate $|\keptedges\cap\uncoloredges|$ is to use a suitable version of Chernoff's inequality (see \cite[Claims A.6, A.7]{alon2020palette}, \cite[Lemmas 6, 7]{PS15}).
This approach fails for correspondence colorings as in this setting two colors from the same list can be connected by a path, and thus events defined on those colors are no longer independent. 
To deal with this, one can apply Talagrand's inequality, a powerful concentration tool for random variables satisfying certain Lipschitz-like constraints.
Unfortunately, the Lipschitz parameter of the random variable $|\keptedges\cap\uncoloredges|$ is too large to apply Talagrand's inequality directly.
Instead, the authors show that with sufficiently high probability $|\uncoloredges|$ is not much larger than its expected value, and $|\uncoloredges \setminus \keptedges|$ is not much smaller than its expected value.
The identity $|\keptedges\cap\uncoloredges| = |\uncoloredges| - |\uncoloredges \setminus \keptedges|$ then implies the desired concentration bound. 

While considering the random variable $|\uncoloredges|$, it turns out that the Lipschitz parameter is still too large to apply Talagrand's inequality.
However, the events that drive up the value of this parameter occur with very low probability.
Concentration can therefore be achieved by employing a variation of Talagrand's Inequality, which can handle such exceptional events, developed by Bruhn and Joos \cite[Theorem 3.1]{BRUHN2015277}.

For the quantity $|\uncoloredges \setminus \keptedges|$, we cannot apply this result of Bruhn and Joos directly, i.e., the Lipschitz parameter is still too large. 
To circumvent this problem, the authors employ a \textit{random partitioning technique} developed in their earlier work \cite{anderson2023colouring}. 
The idea is to find a partition $\set{X_1, \ldots, X_\tau}$ of the edge set $\cup_{c\in L(v)}E_H(c)$ such that $|(\uncoloredges \setminus \keptedges) \cap X_i|$ has a sufficiently small Lipschitz parameter for each $i$. 
It turns out a random partition satisfies this condition with high probability for an appropriate choice of $\tau$.
The proof now follows by applying the exceptional version of Talagrand's inequality to each part separately and then taking a union bound.

The result of the lemma is stated below, where $d_{\max} \defeq \max\{d^{7/8}, \Delta(H)\}$.

\begin{Lemma}\label{concentrationKeptUncolor}
$\P[|\keptedges\cap\uncoloredges| \geq \keep^2\,\uncolor\,\ell(v)(\overline{\deg}_{\mathcal{H}}(v) + 8\eta\beta\,d_{\max})] \leq d^{-100}$.
\end{Lemma}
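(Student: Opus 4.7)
The plan is to decompose the quantity of interest as
\[|\keptedges\cap\uncoloredges| \;=\; |\uncoloredges| \;-\; |\uncoloredges \setminus \keptedges|,\]
and concentrate each term separately. A direct Chernoff bound fails, since in a correspondence cover two distinct colors in the same list $L(v)$ may be joined by a path of length two through a shared neighbor, so the edge-indicator variables are not independent. Plain Talagrand also fails: flipping a single coin $\eq(c)$ or toggling a single activation variable can swing $|\uncoloredges|$ by up to $\Theta(\Delta(H))$, which is too large compared with the target deviation $\eta\beta\,d_{\max}\,\ell(v)$.

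To handle $|\uncoloredges|$, I would invoke the exceptional version of Talagrand's inequality of Bruhn and Joos. The crucial point is that the bad outcomes driving the Lipschitz constant, namely that a single color $c$ has unusually many kept neighbors whose underlying vertex gets uncolored, are themselves very improbable: each occurs with probability $O(\keep\,\uncolor\,d/\ell)^{\omega(1)}$ using assumption \ref{item:Delta}, which is much smaller than any inverse polynomial in $d$. Combining the Bruhn--Joos theorem with the expectation estimate (an easy variant of Lemma \ref{expectationKeptUncolor}, obtained by dropping the $\keep$-factor on $c$) then yields
\[|\uncoloredges| \;\leq\; \keep\,\uncolor\,\ell(v)\,\overline{\deg}_\mathcal{H}(v) + O(\eta\beta\,\uncolor\,\ell(v)\,d_{\max})\]
with probability at least $1-d^{-200}$.

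For the term $|\uncoloredges \setminus \keptedges|$, the exceptional events are no longer rare enough for Bruhn--Joos to close the gap on its own, so I would use the random partitioning technique of \cite{anderson2023colouring}. Partition the edge set $\bigcup_{c\in L(v)} E_H(c)$ uniformly at random into $\tau$ parts $X_1, \dots, X_\tau$ for an appropriately tuned $\tau$; a short first-moment calculation shows that with high probability every restriction $|(\uncoloredges \setminus \keptedges) \cap X_i|$ has a Lipschitz parameter small enough for Bruhn--Joos to give concentration at scale $\eta\beta\,d_{\max}\,\ell(v)/\tau$. Applying the exceptional Talagrand inequality inside each part, taking a union bound over the $\tau$ parts, and summing yields the desired tail bound. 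Combining the two pieces via the identity above and another union bound gives the claim.

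The main technical obstacle is the random-partition step: the number of parts $\tau$, the Lipschitz parameter, the exceptional probability, and the conditional expectations must all be balanced so that the final tail beats $d^{-100}$, which is sensitive to the precise ranges in \ref{item: k not too large}--\ref{item:eta}. However, since the hypotheses of this lemma differ from those of \cite[Lemma~4.4]{anderson2022coloring} only through the sparsity assumption --- which enters exclusively in the expectation bound of Lemma \ref{expectationKeptUncolor} and not in the concentration analysis --- the argument carried out there applies verbatim, and we can invoke it as a black box.
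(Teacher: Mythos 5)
Your proposal is correct and follows essentially the same route as the paper: the paper's proof of Lemma~\ref{concentrationKeptUncolor} simply invokes \cite[Lemma 4.4]{anderson2022coloring}, whose argument (the decomposition $|\keptedges\cap\uncoloredges| = |\uncoloredges| - |\uncoloredges\setminus\keptedges|$, the Bruhn--Joos exceptional Talagrand inequality for the first term, and the random partitioning technique for the second) is exactly what you sketch. Your closing observation that the sparsity hypothesis enters only through the expectation bound of Lemma~\ref{expectationKeptUncolor}, so the concentration analysis carries over verbatim, is precisely the paper's justification.
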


\begin{proof}
The proof is the same as the proof of \cite[Lemma 4.4]{anderson2022coloring}.
\end{proof}

The next two lemmas are purely computational, using the results of Lemmas~\ref{listConcentration} and \ref{concentrationKeptUncolor}.
As the calculations are identical to those in \cite[Section 4]{anderson2022coloring}, we omit them here.
First, we show that $\nd$ is not too large with high probability.

\begin{Lemma}\label{degreeConcentration}
    $\P[\nd > \keep\,\uncolor\,\overline{\deg}_{\mathcal{H}}(v) + 15\eta\beta\,\keep\,\uncolor\,d_{\max}] \leq d^{-75}$.
\end{Lemma}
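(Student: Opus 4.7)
The plan is to deduce this bound as a straightforward consequence of Lemmas~\ref{listConcentration} and \ref{concentrationKeptUncolor} together with the identity $\nd = |\keptedges \cap \uncoloredges|/k(v)$. More precisely, I would define the two ``bad'' events
\[
    \mathcal{E}_1 \defeq \{k(v) < (1 - \eta\beta)\,\keep\,\ell(v)\}, \qquad
    \mathcal{E}_2 \defeq \{|\keptedges\cap\uncoloredges| \geq \keep^2\,\uncolor\,\ell(v)(\overline{\deg}_{\mathcal{H}}(v) + 8\eta\beta\,d_{\max})\},
\]
and note that outside of $\mathcal{E}_1 \cup \mathcal{E}_2$ we have
\[
    \nd \,\leq\, \frac{\keep^2\,\uncolor\,\ell(v)(\overline{\deg}_{\mathcal{H}}(v) + 8\eta\beta\,d_{\max})}{(1-\eta\beta)\,\keep\,\ell(v)} \,=\, \frac{\keep\,\uncolor\,(\overline{\deg}_{\mathcal{H}}(v) + 8\eta\beta\,d_{\max})}{1-\eta\beta}.
\]

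From here the argument is purely computational. Since $\eta\beta$ is very small (in particular $\eta\beta \leq 1/2$ for $d$ large by \ref{item:ell} and \ref{item:eta}), one has $1/(1-\eta\beta) \leq 1 + 2\eta\beta$. Using $\overline{\deg}_{\mathcal{H}}(v) \leq \Delta(H) \leq 2d \leq 2d_{\max}$ to bound the cross term $2\eta\beta\,\overline{\deg}_{\mathcal{H}}(v) \leq 4\eta\beta\,d_{\max}$, and absorbing the $O((\eta\beta)^2 d_{\max})$ term, the right-hand side is bounded above by $\keep\,\uncolor\,\overline{\deg}_{\mathcal{H}}(v) + 15\eta\beta\,\keep\,\uncolor\,d_{\max}$.

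It remains to union-bound the failure probability. By Lemma~\ref{listConcentration}, $\P[\mathcal{E}_1] \leq \exp(-d^{1/10})$, and by Lemma~\ref{concentrationKeptUncolor}, $\P[\mathcal{E}_2] \leq d^{-100}$. For $d \geq \tilde d$ sufficiently large, the sum is bounded by $d^{-75}$, as required. No step here requires new ideas: the only mildly nontrivial point is keeping the slack factors tight enough to land inside the $15\eta\beta$ budget, and this is handled by the elementary estimates above. There is no genuine obstacle to this lemma once Lemmas~\ref{listConcentration} and \ref{concentrationKeptUncolor} (the real technical work) are in hand.
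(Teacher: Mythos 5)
Your proposal is correct and follows the same route the paper takes (it defers to \cite[Lemma 4.5]{anderson2022coloring}, which is exactly this computation): combine Lemma~\ref{listConcentration} and Lemma~\ref{concentrationKeptUncolor} via $\nd = |\keptedges\cap\uncoloredges|/k(v)$, note $\overline{\deg}_{\mathcal{H}}(v) \leq \Delta(H) \leq d_{\max}$ so the slack fits inside the $15\eta\beta$ budget, and finish with a union bound. No gap here.
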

\begin{proof}
The proof is the same as the proof of \cite[Lemma 4.5]{anderson2022coloring}.
\end{proof}

We can now combine the results of this section to prove the desired bound on $\delta'(v)$. Again, the proof is purely computational, and so we refer to \cite[Lemma 4.6]{anderson2022coloring}.

\begin{Lemma}\label{deltaBound}$\P\left[\delta'(v) \leq (1+\beta')d'\right] \geq 1-d^{-50}$.
\end{Lemma}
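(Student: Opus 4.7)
The plan is to derive the bound on $\delta'(v)$ by combining the high-probability bounds from Lemma~\ref{listConcentration} and Lemma~\ref{degreeConcentration} via a union bound, and then performing a deterministic algebraic argument in the spirit of \cite[Lemma 4.6]{anderson2022coloring}. First, I would take a union bound: since $\exp(-d^{1/10}) + d^{-75} \leq d^{-50}$ for $d \geq \tilde{d}$, with probability at least $1 - d^{-50}$ we simultaneously have
\[
k(v) \leq (1+\eta\beta)\,\keep\,\ell(v) \qquad \text{and} \qquad \nd \leq \keep\,\uncolor\,\overline{\deg}_{\mathcal{H}}(v) + 15\eta\beta\,\keep\,\uncolor\,d_{\max}.
\]
Conditioning on this event, the remaining task is deterministic.

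The key algebraic step is to bound $\ell'\delta'(v)$ by $k(v)\cdot\nd + (\ell'-k(v))\cdot 2d'$. To see this, write
\[
\ell'\delta'(v) \,=\, \sum_{c\in L'(v)}|N_H(c)\cap V(H')| + (k(v)-\ell'(v))\cdot 2d' + (\ell'-k(v))\cdot 2d'.
\]
Since $V(H') \subseteq K\cap U$, we have $|N_H(c)\cap V(H')| \leq |N_H(c)\cap K\cap U|$ for every $c\in L'(v)$, and by the very definition of $L'(v)$ in Step~\hyperref[step: define L']{6} of the \hyperref[algorithm: wcp]{Wasteful Coloring Procedure}, each $c\in K(v)\setminus L'(v)$ satisfies $|N_H(c)\cap K\cap U| > 2d'$. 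Hence the first two terms together are bounded by $\sum_{c\in K(v)}|N_H(c)\cap K\cap U| = k(v)\cdot \nd$. Dividing by $\ell'$ and setting $\mu \defeq k(v)/\ell'$, this yields $\delta'(v) \leq \mu\,\nd + (1-\mu)\,2d'$.

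Now substituting the concentration bound on $\nd$ and rewriting in terms of $\delta(v)$, I would compare $\delta'(v)$ with $\keep\,\uncolor\,\delta(v) = \lambda(v)\keep\,\uncolor\,\overline{\deg}_{\mathcal{H}}(v) + (1-\lambda(v))\cdot 2d'$, which by \eqref{eqn:avg_delta} satisfies $\keep\,\uncolor\,\delta(v) \leq (1+\beta)d'$. The difference is
\[
\delta'(v) - \keep\,\uncolor\,\delta(v) \,\leq\, (\mu - \lambda(v))\bigl[\keep\,\uncolor\,\overline{\deg}_{\mathcal{H}}(v) - 2d'\bigr] + 15\eta\beta\,\mu\,\keep\,\uncolor\,d_{\max}.
\]
Since $\overline{\deg}_{\mathcal{H}}(v)\leq\Delta(H)\leq 2d$, the bracket is non-positive; combined with $|\mu - \lambda(v)| \leq \eta\beta\,\lambda(v)$ (from the concentration of $k(v)$ and the identity $\ell' = \keep\,\ell$), the first term is bounded by $O(\eta\beta)\,d'$, and the second is bounded by $O(\eta\beta)\,d'$ as well using $d_{\max}\leq 2d$ and $\mu = O(1)$. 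Combining yields $\delta'(v) \leq (1+\beta)d' + 36\eta\beta\,d' = (1+\beta')d'$.

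The main obstacle will be the careful bookkeeping of the constants so that the accumulated error fits inside $36\eta\beta\,d'$ rather than some larger multiple; splitting into the cases $\mu \geq \lambda(v)$ and $\mu < \lambda(v)$ and using the tighter of the available estimates in each regime (together with the bounds $\beta \leq 1/10$ and $\eta \leq 1/\log(dk^{-1/(s+t)})$ from hypotheses \ref{item:ell}--\ref{item:eta} of Lemma~\ref{lemma:iteration}) is what allows the constant $36$ to be achieved. Since the remaining work is purely arithmetic and mirrors the computation in \cite[Lemma 4.6]{anderson2022coloring} line-for-line (our extra hypothesis \ref{item: sparsity} plays no role here, having already been absorbed into Lemma~\ref{expectationKeptUncolor}), it can be cited rather than reproduced.
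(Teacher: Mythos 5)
Your proposal is correct and follows essentially the same route as the paper, which simply cites the computation of \cite[Lemma 4.6]{anderson2022coloring}: union-bound the events from Lemmas~\ref{listConcentration} and \ref{degreeConcentration}, then deterministically bound $\delta'(v)$ via $\ell'\delta'(v) \leq k(v)\,\nd + (\ell'-k(v))\,2d'$ (using Step~\hyperref[step: define L']{6} and $V(H')\subseteq K\cap U$) and compare with $\keep\,\uncolor\,\delta(v)\leq(1+\beta)d'$. The bookkeeping indeed closes, since the error terms total at most roughly $(2+30)(1+\eta\beta)(1+\beta)\,\eta\beta\,d' < 36\eta\beta\,d'$, matching $\beta'=(1+36\eta)\beta$.
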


\begin{proof}
The proof is the same as the proof of \cite[Lemma 4.6]{anderson2022coloring}.
\end{proof}

We are now ready to finish the proof of Lemma \ref{lemma:iteration} \ep{modulo Lemma~\ref{expectationKeptUncolor}}.

\begin{proof}[Proof of Lemma \ref{lemma:iteration}]
    This proof is the same as the proof of \cite[Lemma 3.1]{anderson2022coloring}, but is included here for completeness. We perform the \hyperref[algorithm: wcp]{Wasteful Coloring Procedure} on $G$ and $\mathcal{H}$ and define the following random events for each $v \in V(G)$: 
\begin{enumerate}
    \item $A_v \defeq \set{\ell'(v) \leq (1+\beta')}$,
    \item $B_v \defeq \set{\delta'(v) \geq (1+\beta')d'}$.
\end{enumerate}
We now use the \hyperref[LLL]{\LLL} \ep{Theorem \ref{LLL}}. By Lemmas \ref{listConcentration} and \ref{deltaBound}, we have:
\[\P[A_v] \leq \exp\left(-d^{1/10}\right) \leq d^{-50}, \qquad
\P[B_v] \leq d^{-50}.\]
Let $p \defeq d^{-50}$.
Note that the events $A_v$, $B_v$ are mutually independent from the events of the form $A_u$, $B_u$, where $u \in V(G)$ is at distance more than $4$ from $v$. Since $\Delta(G) \leq 2(1+\beta)\ell d$, there are at most $2(2(1+\beta)\ell d)^4 \leq d^{10}$ events corresponding to the vertices at distance at most $4$ from $v$. So we let $\dlll \defeq d^{10}$ and observe that $4p\dlll = 4 d^{-40} < 1$. By the \hyperref[LLL]{\LLL}, with positive probability none of the events $A_v$, $B_v$ occur. By Lemma~\ref{lemma:deltaprime}, this implies that, with positive probability, the output of the \hyperref[algorithm: wcp]{Wasteful Coloring Procedure} satisfies the conclusion of Lemma~\ref{lemma:iteration}.
\end{proof}

\subsection{Proof of Lemma \ref{expectationKeptUncolor}}\label{sec: proof of expectation}

This is the only part of the proof that requires $(k, K_{s, t})$-local-sparsity.
We will first provide an upper bound for $\E[\keptEdges]$.

\begin{Lemma}\label{lemma: expectation}
    $\E[\keptEdges] \leq \keep^2 \ell(v) \overline{\deg}(v)(1 + \eta \beta)$
\end{Lemma}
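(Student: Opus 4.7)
The plan is to compute $\Pr[c, c' \in K]$ for each edge $cc' \in E(H)$ with $c \in L(v)$, then sum to obtain $\E[|\keptedges|]$. Recall that $c \in K$ iff $\eq(c) = 1$ and $N_H(c) \cap A = \emptyset$; these two events are independent, and the equalizing coin flips are independent of $A$. Applying inclusion-exclusion to $(N_H(c) \cup N_H(c')) \cap A = \emptyset$ gives
\[
\Pr[c, c' \in K] \,=\, \frac{\keep^2}{(1-\eta/\ell)^{\deg_H(c) + \deg_H(c')}} \cdot (1-\eta/\ell)^{\deg_H(c) + \deg_H(c') - |N_H(c) \cap N_H(c')|} \,=\, \keep^2 (1-\eta/\ell)^{-|N_H(c) \cap N_H(c')|}.
\]
Since $|N_H(c) \cap N_H(c')| \leq 2d$ by \ref{item:Delta} and $\eta/\ell < 1/(4d)$ by \ref{item:ell}, the product $m \eta/\ell \leq 1/2$, permitting the linearization $(1-x)^{-m} \leq 1 + 2mx$.

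After summing over all edges incident to $L(v)$ and using the double-counting identity $\sum_{c' \in N_H(c)} |N_H(c) \cap N_H(c')| = 2|E(H[N_H(c)])|$ (each edge of $H[N_H(c)]$ corresponds to two ordered pairs of adjacent neighbors of $c$), I obtain
\[
\E[|\keptedges|] \,\leq\, \keep^2 \ell(v) \overline{\deg}(v) \;+\; \keep^2 \cdot \frac{4\eta}{\ell} \sum_{c \in L(v)} |E(H[N_H(c)])|.
\]
So the problem reduces to showing $\sum_c |E(H[N_H(c)])| \leq \tfrac{\beta \ell}{4} \ell(v) \overline{\deg}(v)$.

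Here the local sparsity assumption \ref{item: sparsity} enters: each $H[N_H(c)]$ is a $(k, K_{s,t})$-sparse graph on $\deg_H(c) \leq 2d$ vertices. Combining Theorem~\ref{theo:KST_multiple} with the trivial bound $|E(H[N_H(c)])| \leq \binom{\deg_H(c)}{2}$ yields
\[
|E(H[N_H(c)])| \,\leq\, \tfrac{1}{2} \deg_H(c) \cdot \min\!\left(\deg_H(c),\; A \deg_H(c)^{p-1}\right),
\]
where $A = s^{1/t} t^{1/s} k^{1/(st)}$ and $p = 2 - 1/s - 1/t$. Setting $M \defeq \max_{x \in [0,2d]} \min(x, A x^{p-1})$, I get $|E(H[N_H(c)])| \leq (M/2) \deg_H(c)$, and summing yields $\sum_c |E(H[N_H(c)])| \leq (M/2) \ell(v) \overline{\deg}(v)$. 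This crucially produces the $\overline{\deg}(v)$ factor automatically, reducing the inequality to $M \leq \beta\ell/2$.

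The final step, verifying $M \leq \beta \ell / 2$, is the main obstacle. The two branches of $\min(x, A x^{p-1})$ cross at $x = \theta \defeq s^{s/(s+t)} t^{t/(s+t)} k^{1/(s+t)}$. When $p \geq 1$ (that is, $s, t \geq 2$), $M$ equals $A (2d)^{p-1} = s^{1/t} t^{1/s} k^{1/(st)} (2d)^{1-1/s-1/t}$; when $p \leq 1$ (that is, $s = 1$ or $t = 1$), $M = \theta$. In either case, the bound $M \leq \beta \ell/2$ follows from a delicate case analysis using $\ell \geq 4\eta d$, $k \leq d^{(s+t)/5}$, $\beta \geq d^{-1/(200t)}$, $\eta \geq 1/\log^5 d$, and the bounds $s \leq d^{2/25}$, $t \leq \tilde\alpha \log d / \log\log d$ from \ref{item:t}. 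The lower bound on $\beta$ is precisely calibrated to absorb the factor $k^{1/(st)}$, and the restriction $k \leq d^{(s+t)/5}$ (rather than the full range $k \leq d^{s+t}$) is exactly what makes this work — in line with the discussion in \S\ref{sec: proof overview}, where any larger $k$ would cause the correction term to dominate.
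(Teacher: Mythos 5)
Your proposal is correct and takes a genuinely different route from the paper's. The paper keeps the factor $(1-\eta/\ell)^{-|N_H(c)\cap N_H(c')|}$ in exponential form and runs a case analysis on $\deg_H(c)$ against the threshold $d^{1-1/(5t)}$, in the high-degree case splitting $N_H(c)$ into high- and low-codegree neighbors and bounding the number of high-codegree ones via the handshake lemma together with Theorem~\ref{theo:KST_multiple} (Claim~\ref{claim:bad}). You instead linearize uniformly: since $|N_H(c)\cap N_H(c')|\cdot\eta/\ell\le 2d\cdot\eta/\ell\le 1/2$ by \ref{item:Delta} and \ref{item:ell}, Bernoulli's inequality gives $(1-\eta/\ell)^{-m}\le 1/(1-m\eta/\ell)\le 1+2m\eta/\ell$, and the codegree sum collapses by handshaking to $2|E(H[N_H(c)])|$, so the whole lemma reduces to the single inequality $M\le\beta\ell/2$ with $M=\max_{x\in[0,2d]}\min(x,Ax^{p-1})$, $A=s^{1/t}t^{1/s}k^{1/(st)}$, $p=2-1/s-1/t$. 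This is structurally cleaner (no Bad/Good bookkeeping, sparsity enters in exactly one place, and the $\overline{\deg}(v)$ factor appears automatically), but it concentrates all of the numerics in the one step you label ``the main obstacle'' and then merely assert. That step does hold and should be written out, as it is the analogue of the paper's Claim~\ref{claim:bad}: in the worst case $\beta\ell/2\ge 2\eta d^{1-1/(200t)}\ge 2d^{1-1/(200t)}/\log^5 d$, while for $s,t\ge 2$ one has $\theta\le st\,k^{1/(s+t)}\le d^{2/25}\log d\cdot d^{1/5}\le 2d$ (so the maximum is indeed at $x=2d$, a point you use implicitly) and, using $s\le d^{2/25}$, $t^{1/s}\le t^{1/t}\le 2$, and $k\le d^{(s+t)/5}$, the bound $A(2d)^{p-1}\le 4\,d^{1-4/(5s)-18/(25t)}$; since $18/(25t)-1/(200t)=143/(200t)$ and $t\le\tilde\alpha\log d/\log\log d$, we get $d^{143/(200t)}\ge(\log d)^{143/(200\tilde\alpha)}\gg\log^5 d$ for $\tilde\alpha$ small, which is exactly the freedom the lemma grants. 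For $t=1$ (so $p<1$) the maximum is at the crossing point $\theta\le s\,k^{1/(s+1)}\le d^{7/25}$, which is far below $\beta\ell/2$. With that short computation supplied, your argument is a complete and somewhat shorter proof of Lemma~\ref{lemma: expectation}.
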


\begin{proof}
    Note that 
    \[\keptEdges = \sum_{c \in L(v)} \sum_{c' \in N_H(c)} \mathbbm{1}_{\{c, c' \in K\}}.\]
    If $c \in K$, then two events occur: no color in $N_H(c)$ is activated, and $c$ survives its equalizing coin flip. Since activations and equalizing coin flips occur independently, we have:
    \begin{align*}
        \mathbb{E}[\keptEdges] &= \sum_{c \in L(v)} \sum_{c' \in N_H(c)} \mathbb{P}[c, c' \in K]\\
        &= \sum_{c \in L(v)} \sum_{c' \in N_H(c)} \mathbb{P}[\eq(c) = 1]\, \mathbb{P}[\eq(c') = 1] \left(1 - \frac{\eta}{\ell} \right)^{|N_H(c) \cup N_H(c')|}\\
        &= \sum_{c \in L(v)} \sum_{c' \in N_H(c)} \mathbb{P}[\eq(c) = 1]\, \mathbb{P}[\eq(c') = 1] \left(1 - \frac{\eta}{\ell} \right)^{|N_H(c)|+|N_H(c')| - |N_H(c)\cap N_H(c')|}\\
        &= \keep^2 \sum_{c \in L(v)} \sum_{c' \in N_H(c)} \left(1 - \frac{\eta}{\ell} \right)^{-|N_H(c) \cap N_H(c')|}.
    \end{align*}
    For $c \in H$, we will consider two cases based on the value of $\deg_H(c)$ (we will abuse notation and say $c \in \text{C1}$ and $c \in \text{C2}$ when $c$ satisfies the conditions to be in Case 1 and Case 2, respectively.)

\begin{enumerate}[leftmargin = \leftmargin + 1\parindent, wide, label=(Case \arabic*)] 
    \item\label{Case1}

    $\deg_H(c) < d^{1 - 1/(5st)}$. Then 
    % \[|N_H(c) \cap N_H(c')| \leq |N_H(c)| = \deg_H(c) \leq d^{1 - 1/(5t)}.\]
    \[|N_H(c) \cap N_H(c')| \leq |N_H(c)| = \deg_H(c) \leq d^{1 - 1/(5st)}.\]
    
    Thus, using the inequality $(1-x)^n \geq 1-nx$ for $n\geq 1$ and $x \leq 1$, and the lower bound $\ell \geq 4\eta d$ from assumption \ref{item:ell}, we have:
    \begin{align}
        \sum_{c' \in N_H(c)} \left(1 - \frac{\eta}{\ell} \right)^{-|N_H(c) \cap N_H(c')|} \nonumber  
        &\leq \sum_{c' \in N_H(c)} \left(1 - \frac{\eta}{\ell} \right)^{- d^{1 - 1/(5st)}} \quad \text{(as $1-\eta/\ell < 1$)} \nonumber\\
        &\leq \deg_H(c) \left(1 - d^{1 - 1/(5st)} \frac{\eta}{\ell} \right)^{-1} \nonumber \\
        &\leq \deg_H(c) \left( 1 - \frac{d^{- 1/(5st)}}{4} \right)^{-1} \nonumber\\
        &\leq \deg_H(c) \left( 1 + d^{- 1/(5st)} \right), \label{eq: Case 1}
    \end{align}
    where the last inequality holds since $d^{-1/(5st)} \leq 1$.

    \item\label{Case2}
    
    $\deg_H(c) \geq d^{1 - 1/(5st)}$. We note that this is the only place where we use $(k, K_{s,t})$-local-sparsity. Define the following sets:
    \begin{align*}
        \Bad &\defeq \left\{c' \in N_H(c) : |N_{H}(c') \cap N_H(c)|\geq \deg_H(c)^{1-1/(5st)}\right\},\\
        \Good &\defeq N_H(c) \setminus \Bad.
    \end{align*} 
    
    Then:
    \begin{align}
        &\hspace{1.2em}\sum_{c' \in N_H(c)} \left(1 - \frac{\eta}{\ell} \right)^{-|N_H(c) \cap N_H(c')|} \nonumber \\
        &= \sum_{c' \in \Bad} \left(1 - \frac{\eta}{\ell} \right)^{-|N_H(c) \cap N_H(c')|} + \sum_{c' \in  \Good} \left(1 - \frac{\eta}{\ell} \right)^{-|N_H(c) \cap N_H(c')|} \nonumber \\
        & \leq \sum_{c' \in \Bad} \left(1 - \frac{\eta}{\ell} \right)^{-\deg_H(c)} + \sum_{c' \in  \Good} \left(1 - \frac{\eta}{\ell} \right)^{- \deg_H(c)^{1 - 1/5st}} \nonumber \\ 
        & \leq     |\Bad| \left(1 - \frac{\eta}{\ell} \right)^{-2d} + (\deg_H(c) - |\Bad|) \left(1 - \frac{\eta}{\ell} \right)^{- \deg_H(c)^{1 - 1/5st}}. \label{eq:final inequality for bound with bad}
    \end{align}
    
    Since $\deg_H(c)^{1 - 1/5st} \leq 2d$, the last expression is increasing in terms of $|\Bad|$.
    The following claim provides an upper bound on the size of $\Bad$.
    
    \begin{claim}\label{claim:bad}
        $|\Bad| \leq \deg_H(c)^{1- 1/(5st)}$. 
    \end{claim}
    \begin{claimproof}
        By the Handshaking Lemma,
        \[ |\Bad| \leq \frac{2\,|E(H[N_H(c)])|}{\deg_H(c)^{1 - 1/(5st)}}.\]
        Since $H$ is $(k, K_{s,t})$-locally-sparse, $H[N_H(c)]$ is $(k, K_{s,t})$-sparse,
        and so by Proposition~\ref{prop:KST_multiple} it follows that if for
        $k^\star \defeq \dfrac{(s+t)^2\,\deg_H(c)^{s+t - 1}}{2\,s^s\,t^t}$  we have $k \leq k^\star$, then
        \[ |E(H[N_H(c)])| \leq 4\,\deg_H(c)^{2-1/(st)} .\]
        Let us first show $k \leq k^\star$.
        As we are in \ref{Case2}, we have $\deg_H(c) \geq d^{1 - 1/(5st)}$.
        It follows that
        \[k^\star \geq \dfrac{(s+t)^2\,d^{(1 - 1/(5st))(s+t - 1)}}{2\,s^s\,t^t}  = \dfrac{(s+t)^2\,d^{s+t - 1 - 1/(5t) - 1/(5s) + 1/(5st)}}{2\,s^s\,t^t}  \]
        Note that 
        \[\frac{s^s t^t}{(s+t)^2} < (st)^{st} \leq \left( \frac{\tilde{\alpha} \log d }{\log \log d}\right)^{\frac{\tilde{\alpha} \log d }{\log \log d}} < \log d^{\frac{\tilde{\alpha} \log d }{\log \log d}} = d^{\tilde{\alpha}},\]
        for $\tilde \alpha$ sufficiently small, where the second inequality holds by assumption \ref{item:t}.
        Thus, 
        \[\frac{(s+t)^2}{ s^s t^t} > d^{-\tilde{\alpha}}.\]
        Additionally, we claim that
        \[s+t - 1 - 1/(5t) - 1/(5s) + 1/(5st) \geq (s+t)/5 + 2/5, \quad  \text{for all} \quad s \geq t \geq 1.\] 
        One can manually verify the above for $s = t = 1$ and $s = 2, t = 1$. 
        For $s \geq t \geq 2$,  we have
        \[ 4(s+t)/5 + 1/(5st) > 3 > 1/(5t) + 1/(5s) + 1 + 2/5,\]
        which implies the desired inequality.
        % \[s+t - 1 - 1/(5t) - 1/(5s) + 1/(5st) > (s+t)/5 + 2/5 , \quad  \text{for all} \quad s \geq t \geq 2. \]
        
        Thus, for $d$ large enough and $\tilde{\alpha}$ sufficiently small, we have
        $k^\star > d^{(s+t)/5} \geq k$ by assumption \ref{item: k not too large}.
        Hence, by Proposition~\ref{prop:KST_multiple} we have
        \begin{align*}
            |\Bad| \leq \frac{2\,|E(H[N_H(c)])|}{\deg_H(c)^{1 - 1/(5st)}}
            &\leq \frac{2\cdot 4\,\deg_H(c)^{2-1/(st)}}{\deg_H(c)^{1 - 1/(5st)} } \\
            &= 8 \deg_H(c)^{1 - 4/(5st)} \\
            &\leq \deg_H(c)^{1 - 1/(5st)},
        \end{align*}
        as desired.
    \end{claimproof}

    With Claim~\ref{claim:bad} in hand, and as $\left(1 - \frac{\eta}{\ell} \right)^{-2d} = \frac{1}{\keep} \leq 2$ due to assumption \ref{item:ell}, we have the following as a result of \eqref{eq:final inequality for bound with bad}:
    \begin{align}
            &\quad \sum_{c' \in N_H(c)} \left(1 - \frac{\eta}{\ell} \right)^{-|N_H(c) \cap N_H(c')|} \nonumber\\
            & \leq  2 \deg_H(c)^{1- 1/(5st)} +  (\deg_H(c) -  \deg_H(c)^{1- 1/(5st)} ) \left(1 - \frac{\eta}{\ell} \right)^{- \deg_H(c)^{1 - 1/5st}} \nonumber \\
            & \leq 2 \deg_H(c)^{1- 1/(5st)} +  \frac{(\deg_H(c) -  \deg_H(c)^{1- 1/(5st)} )}{ \left(1 - \deg_H(c)^{1 - 1/5st} \frac{\eta}{\ell} \right) } \nonumber  \\
            & = \deg_H(c)\left( 2 \deg_H(c)^{- 1/(5st)} +  \frac{1 -  \deg_H(c)^{- 1/(5st)} }{ \left(1 - \frac{\eta}{\ell} \deg_H(c)^{1 - 1/5st}  \right) }  \right) \nonumber \\
            & \leq  \deg_H(c)\left( 2 \deg_H(c)^{- 1/5st} +  \frac{1 -  \deg_H(c)^{- 1/(5st)} }{ \left(1 - \frac{\eta}{\ell} 2d \deg_H(c)^{- 1/5st}  \right) }  \right) \qquad (\text{since } \deg_H(c) \leq 2d) \nonumber\\
            & \leq  \deg_H(c)\left( 2 \deg_H(c)^{- 1/5st} +  \frac{1 -  \deg_H(c)^{- 1/5st} }{ \left(1 - \frac{1}{2} \deg_H(c)^{- 1/5st}  \right) }  \right) \qquad (\text{since } 2d \frac{\eta}{\ell} \leq \frac{1}{2}) \nonumber \\
            & \leq \deg_H(c)\left( \deg_H(c)^{- 1/10st} + 1  \right) \label{eq: Case 2},
    \end{align}
    completing this case.
\end{enumerate}

By \eqref{etabeta}, we have $\eta\beta \,\geq d^{-1/(100st)} \geq d^{-1/(10st)}$.
Therefore, putting together the bounds \eqref{eq: Case 1} from \ref{Case1} and \eqref{eq: Case 2} from \ref{Case2}, we have the following:
 \begin{align*}
    \mathbb{E}[\keptEdges]  &  = \keep^2 \sum_{c \in L(v)} \sum_{c' \in N_H(c)} \left(1 - \frac{\eta}{\ell} \right)^{-|N_H(c) \cap N_H(c')|} \\
    & \leq \keep^2  \left( \sum_{c \in \text{\hyperlink{Case1}{C1}}} \deg_H(c) \left( 1 + d^{- 1/(5st)} \right)     + \sum_{c \in \text{\hyperlink{Case2}{C2}}} \deg_H(c)\left( \deg_H(c)^{- 1/10st} + 1  \right)  \right)  \\
    & \leq \keep^2  \left( \sum_{c \in \text{\hyperlink{Case1}{C1}}} \deg_H(c) \left(  \eta \beta +1 \right)     + \sum_{c \in \text{\hyperlink{Case2}{C2}}} \deg_H(c)\left( \eta \beta + 1  \right)  \right) \\
    & = \keep^2 \sum_{c \in L(v)} \deg_H(c) \left(  \eta \beta +1 \right)  \\
    &= \keep^2 \,\ell(v)\, \overline{\deg}_{\mathcal{H}}(v)\left( \eta \beta + 1  \right),
\end{align*}

completing the proof.
\end{proof}

The following lemma bounds $\E[|\keptedges\cap\uncoloredges|]$ in terms of $\E[|\keptedges|]$.
The proof involves a special case of the FKG inequality, dating back to Harris and Kleitman.
As the argument is identical to that of \cite[Lemma 5.5]{anderson2022coloring}, we omit the details here.

\begin{Lemma}\label{keptUncolorExpectation}
$\E[|\keptedges\cap\uncoloredges|] \leq \uncolor\,(1+4\eta\beta)\E[|\keptedges|]$.
\end{Lemma}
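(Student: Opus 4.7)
My strategy is to use linearity of expectation to reduce the problem to showing that for every pair $(c,c')$ with $c \in L(v)$ and $c' \in N_H(c)$, one has $\P[c' \in U \mid c, c' \in K] \leq (1+4\eta\beta)\,\uncolor$, where ``$c' \in U$'' means that $w \defeq L^{-1}(c')$ is uncolored by $\phi$. Since $\E[|\keptedges \cap \uncoloredges|] = \sum_{c,c'} \P[c,c' \in K]\,\P[c' \in U \mid c, c' \in K]$ and $\E[|\keptedges|] = \sum_{c,c'} \P[c,c' \in K]$, summing the conditional bound over edges immediately yields the lemma.

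To establish the conditional bound, I would invoke the Harris--Kleitman special case of the FKG inequality on the product probability space generated by the activation bits $\mathbbm{1}_{c'' \in A}$ and the equalizing coin flips $\eq(c'')$, which are jointly independent. Each of the events $\{c \in K\}$ and $\{c' \in K\}$ is the intersection of a decreasing event in the activation variables (namely $N_H(c) \cap A = \emptyset$) with an increasing event in the equalizing coin flips ($\eq(c) = 1$); the event $\{w \text{ uncolored}\}$ can be written as
\[
    \bigcap_{\tilde c \in L(w)} \bigl(\{\tilde c \notin A\} \cup \{\eq(\tilde c) = 0\} \cup \{N_H(\tilde c) \cap A \neq \emptyset\}\bigr),
\]
which is increasing in the activation variables and decreasing in the equalizing coin flips. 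Conditioning first on the activation configuration reduces the eq-flip side to independent Bernoullis, and applying Harris--Kleitman (separately in the activation and eq blocks) shows that the conditioning on $c, c' \in K$ can only \emph{decrease} the probability of $\{w \text{ uncolored}\}$, modulo small corrections coming from colors in $L(w)$ that are adjacent in $H$ to either $c$ or $c'$.

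The key observation is that essentially only the single color $c'$ contributes such a correction: by \ref{dp:list_independent} the set $L(w)$ is independent in $H$ so $L(w) \cap N_H(c') = \emptyset$, and by \ref{dp:matching} the matching between $L(v)$ and $L(w)$ touches $L(w)$ in at most one color, which is $c'$ itself. Conditioning on $c' \in K$ combined with the uncoloredness of $w$ forces $c' \notin A$, contributing a multiplicative factor of at most $(1-\eta/\ell)^{-1} \leq 1 + 2\eta/\ell$. The unconditioned probability is bounded using $\ell(w) \geq (1-\beta)\ell/2$ from \ref{item:list_assumption} by
\[
    \P[w \text{ uncolored}] \,\leq\, \Bigl(1 - \tfrac{\keep\,\eta}{\ell}\Bigr)^{\ell(w)} \,\leq\, (1 + O(\eta\beta))\,\uncolor,
\]
and combining with $\eta/\ell \leq 1/(4d)$ and the lower bound $\eta\beta \geq d^{-1/(100t)}$ from \eqref{etabeta} (so that $\eta/\ell$ is absorbed into an $O(\eta\beta)$ term) yields the factor $1 + 4\eta\beta$. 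The main obstacle is the careful bookkeeping of these multiplicative corrections: one must verify that no color in $L(w)$ other than $c'$ is affected by the conditioning, and that every error term generated by the FKG application and the list-size slack is dominated by $\eta\beta$. The remaining arithmetic proceeds exactly as in \cite[Lemma 5.5]{anderson2022coloring}.
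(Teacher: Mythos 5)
Your overall skeleton (linearity of expectation, a per-edge bound on $\P[c' \in U \mid c,c' \in K]$, isolating the single color $c'$ via \ref{dp:list_independent} and \ref{dp:matching} at a cost of $(1-\eta/\ell)^{-1}$, and absorbing all errors into $\eta\beta$) is the same as in the proof the paper cites, \cite[Lemma 5.5]{anderson2022coloring}. But two of your key steps do not hold as stated. First, the monotonicity you need for Harris--Kleitman is false: writing $w = L^{-1}(c')$, the event $\{w \text{ uncolored}\} = \bigcap_{\tilde c \in L(w)}\bigl(\{\tilde c \notin A\} \cup \{\eq(\tilde c)=0\} \cup \{N_H(\tilde c)\cap A \neq \0\}\bigr)$ is \emph{not} increasing in the activation variables: flipping $\mathbbm{1}_{\tilde c \in A}$ from $0$ to $1$ for $\tilde c \in L(w)$ can destroy the event (it can cause $w$ to be colored), while activations of colors outside $L(w)$ act in the opposite direction. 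So Harris cannot be applied to this event directly. The correct monotone object is obtained by first integrating out the activations of $L(w)$ (which, by \ref{dp:list_independent}, are independent of everything that determines $K(w)$ and, by \ref{dp:matching}, of the conditioning event except for the forced $c' \notin A$): this yields $(1-\eta/\ell)^{|K(w)|}$ (up to the $c'$ correction), which \emph{is} increasing in the remaining activation bits and decreasing in the equalizing flips, and it is to this function, against the decreasing event $\{c,c' \in K\}$, that Harris--Kleitman applies.

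Second, your unconditional bound $\P[w \text{ uncolored}] \leq (1-\keep\eta/\ell)^{\ell(w)}$ has the inequality in the wrong direction. Since $\P[w \text{ uncolored}] = \E\bigl[(1-\eta/\ell)^{|K(w)|}\bigr]$ and $x \mapsto (1-\eta/\ell)^x$ is convex, Jensen gives
\[
\P[w \text{ uncolored}] \,\geq\, (1-\eta/\ell)^{\keep\,\ell(w)} \,\geq\, (1-\keep\eta/\ell)^{\ell(w)},
\]
and indeed the positive association of the keep-indicators pushes the uncolored probability \emph{above} the naive product bound, not below it. To recover the needed upper bound $(1+O(\eta\beta))\,\uncolor$ one must add an ingredient you have omitted: the concentration of $k(w)$ around $\keep\,\ell(w)$ (Lemma~\ref{listConcentration}, i.e.\ \cite[Lemma 4.2]{anderson2022coloring}), which gives $\E\bigl[(1-\eta/\ell)^{|K(w)|}\bigr] \leq (1-\eta/\ell)^{(1-\eta\beta)\keep\,\ell(w)} + \exp\bigl(-d^{1/10}\bigr)$, the exceptional term being negligible because $\uncolor$ is bounded below by an absolute constant. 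With these two repairs the rest of your bookkeeping (the $(1-\eta/\ell)^{-1} \leq 1+2\eta/\ell$ factor, the list-size slack from \ref{item:list_assumption}, and \eqref{etabeta}) does close the argument as in \cite[Lemma 5.5]{anderson2022coloring}, but as written the proposal has genuine gaps at exactly the two places where the cited proof does real work.
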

\begin{proof}
    The proof is the same as that in \cite[Lemma 5.2]{anderson2022coloring}. 
\end{proof}

Lemmas~\ref{lemma: expectation} and \ref{keptUncolorExpectation} together imply that
\begin{align*}
    \E[|\keptedges\cap\uncoloredges|] &\leq \keep^2\,\uncolor\,\ell(v)\,\overline{\deg}_{\mathcal{H}}(v)(1+4\eta\beta)(1+\eta\beta) \\
    &\leq \keep^2\,\uncolor\,\ell(v)\,\overline{\deg}_{\mathcal{H}}(v)(1+6\eta\beta),
\end{align*}
which completes the proof of Lemma \ref{expectationKeptUncolor}.

\section{Proof of Theorem~\ref{theo:main_theo}}\label{sec:main_proof}

To prove Theorem \ref{theo:main_theo}, we start by defining several parameters: 
For a given $\epsilon >0$, $d, k \in \R, \,s, t \in \N\setminus\{0\}$, and $i \in \N$, we define the following:
\begin{align*}
    C &\defeq \begin{cases}
        4+\epsilon & \text{ when }k \leq d^{\epsilon(s+t)/200},\\
        8 & \text{ else}.  
    \end{cases}& &\\
    \mu &\defeq \frac{C-\epsilon}{2}\log\left(1+\frac{\epsilon}{8C}\right)  &\eta &\defeq \mu/\log \left(dk^{-1/(s+t)}\right)\\
    \ell_0 &\defeq C\,d/\log\left(dk^{-1/(s+t)}\right) & d_0 &\defeq d \\
    k_i &\defeq \left(1 - \frac{\eta}{\ell_i}\right)^{2d_i}
    & u_i &\defeq \left(1 - \frac{\eta}{\ell_i}\right)^{k_i\,\ell_i/2}
    \\
    \ell_{i+1} &\defeq k_i\, \ell_i & d_{i+1}&\defeq k_i\, u_i\, d_i \\
    \beta_0 &\defeq d_0^{-1/(200st)} & \beta_{i+1} &\defeq \max\left\{(1+36\eta)\beta_i,\, d_{i+1}^{-1/(200st)}\right\}.
\end{align*}
We will also assume $\eps$ is sufficiently small such that $\mu < 1$.
The reader may be familiar with similar arguments parameterized by $\keep_i$ and $\uncolor_i$.
For brevity, we use $k_i$ and $u_i$ instead.

\begin{Lemma}\label{lemma: iteration helper 1} For all $\epsilon > 0$ sufficiently small, there exists $d^*$ such that whenever $d \geq d^*$ and 
\[\frac{1}{2} < k \leq d^{(s+t)/10}, 
    \]
the following hold: \begin{enumerate}[label = {(I\arabic*)}]
    \item\label{item: ratio decreaes by uncolor} For $j \in \N$, $d_{j+1}/\ell_{j+1} \,\leq\, d_j/\ell_j \,\leq\, d_0/\ell_0 \,=\, \frac{1}{C}\log\left(dk^{-1/(s+t)}\right)$. 
    \item\label{item: lower bound on ell by power of d} For $j \in \N$, $\ell_j \,\geq\, d\left(dk^{-1/(s+t)}\right)^{-4/(C-7\epsilon/8)}$.
    \item\label{item: existence of i star} There exists a minimum integer $i^* \,\leq\, \left\lceil\frac{16}{\mu}\log \left(dk^{-1/(s+t)}\right)\log\log \left(dk^{-1/(s+t)}\right)\right\rceil$ such that $d_{i^\star}\,\leq\,\ell_{i^\star}/100$.
\end{enumerate}
\end{Lemma}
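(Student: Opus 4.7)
The plan is to prove the three assertions in order, exploiting the basic identity $r_{i+1} = u_i r_i$ where $r_i \defeq d_i/\ell_i$, which follows from $d_{i+1}/\ell_{i+1} = (k_i u_i d_i)/(k_i \ell_i)$. Throughout I write $D \defeq dk^{-1/(s+t)}$ for brevity, so the hypothesis $k \leq d^{(s+t)/10}$ gives $D \geq d^{9/10}$.

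Part \ref{item: ratio decreaes by uncolor} is immediate: the value $d_0/\ell_0 = \log(D)/C$ is read off from the definitions, and monotonicity of $(d_j/\ell_j)$ follows from the identity $r_{i+1}=u_i r_i$ together with $u_i \in [0,1]$.

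For \ref{item: lower bound on ell by power of d}, the goal is to bound $\ln(\ell_j/\ell_0) = \sum_{i<j} \ln k_i$. By \ref{item: ratio decreaes by uncolor} and the definition of $\eta$, $r_i \leq r_0 = \log(D)/C$ for all $i$, so $\ln k_i = 2d_i \ln(1-\eta/\ell_i) \geq -2\eta r_i(1+o(1))$ and hence $k_i \geq e^{-2\mu/C}(1-o(1)) =: q(1-o(1))$. An analogous estimate yields $u_i \leq e^{-\eta k_i/2}(1+o(1)) \leq e^{-\eta q/2\,(1-o(1))}$, so $(r_i)$ decays geometrically with ratio bounded away from $1$ uniformly in $i$. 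Summing the resulting geometric series gives $\sum_{i \geq 0} r_i \leq 2r_0/(\eta q) \cdot (1+o(1))$, and therefore
\[
    \ln(\ell_j/\ell_0) \,\geq\, -(4r_0/q)(1+o(1)) \,=\, -(4/C)\,e^{2\mu/C}(1+o(1))\log D.
\]
The essential algebraic step is the inequality $e^{2\mu/C} \leq C/(C-7\eps/8)$ with slack of order $\eps/C$. Using $(1+x)^\alpha \leq 1+\alpha x$ for $\alpha\in[0,1]$ one gets $e^{2\mu/C} = (1+\eps/(8C))^{(C-\eps)/C} \leq 1+\eps/(8C)$, whereas $C/(C-7\eps/8) \geq 1+7\eps/(8C)$. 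The $\Theta(\eps/C)$ gap absorbs the $(1+o(1))$ factor for $d$ large, and the residual slack (a factor $D^{\Theta(\eps/C^2)}$) dominates the $\log D$ in $\ell_0 = Cd/\log D$, producing the claimed bound $\ell_j \geq d D^{-4/(C-7\eps/8)}$.

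For \ref{item: existence of i star}, the geometric decay established above gives $r_i \leq r_0\, e^{-i\eta q/2\,(1-o(1))}$, so $r_{i^*} \leq 1/100$ is guaranteed once $i^* \geq (2/(\eta q))\log(100 r_0)(1+o(1))$. Taking $\eps$ small enough that $q \geq 1/2$, and using $\log(100 r_0) \leq 2\log\log D$ for $D$ large, substituting $\eta = \mu/\log D$ yields $i^* \leq 8\,\log(D)\log\log(D)/\mu$, comfortably inside the stated bound $\lceil 16\,\log(D)\log\log(D)/\mu\rceil$.

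The main technical obstacle will be the careful bookkeeping of the $(1+o(1))$ errors coming from Taylor approximations (of $\ln(1-x)$, of $1-e^{-x}$, and of the truncation of the geometric series), together with the verification that the approximation $(1-\eta/\ell_i)^n \sim e^{-n\eta/\ell_i}$ is legitimate, which requires $\eta/\ell_i \ll 1$ at every relevant step and is provided inductively by \ref{item: lower bound on ell by power of d}. All these errors must be absorbed into the $\Theta(\eps/C)$ slack produced by the tailored definition of $\mu$; this is possible for $\eps$ sufficiently small and $d \geq d^*$ chosen in terms of $\eps$.
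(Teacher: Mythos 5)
Your proposal is correct and follows essentially the same route as the paper's proof: both arguments run an induction that maintains the lower bound on $\ell_j$ (so that $\eta/\ell_j$ stays small), derive a uniform lower bound on $k_i$ and an upper bound on $u_i$, use the resulting geometric decay of $d_i/\ell_i$ to sum a geometric series controlling $\prod_i k_i$, and close part \ref{item: lower bound on ell by power of d} with the same tailored identity $\exp\left(\tfrac{2\mu}{C-\eps}\right)=1+\tfrac{\eps}{8C}$ measured against $4/(C-7\eps/8)$, with part \ref{item: existence of i star} read off from the same exponential decay. The only difference is organizational (you bound $\sum_i \ln k_i$ directly via $\sum_i d_i/\ell_i$, whereas the paper bootstraps an improved per-step bound on $k_j$ before taking the product), which does not change the substance of the argument.
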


\begin{proof} 
    We will not explicitly compute the value $d^*$, but instead simply state that we let $d$ be sufficiently large when needed.
    
    By definition, $d_{j+1}/\ell_{j+1}$ is smaller than $d_{j}/\ell_{j}$ by a factor of $u_j$ and so \ref{item: ratio decreaes by uncolor} holds.
    Note the following:
    \begin{align}\label{eq:bounds on dk}
    0 < 4/(C-7\epsilon/8) < 1, \quad \text{and} \quad d^{9/10} \,\leq\, dk^{-1/(s+t)} \,\leq\, 2d.
    \end{align}
    We now use strong induction on $j \in \N$ to prove \ref{item: lower bound on ell by power of d}. 
    As $d \to \infty$, it follows $dk^{-1/(s+t)} \to \infty$. Thus for $d$ large enough, we have
    \[\frac{\log\left(dk^{-1/(s+t)}\right)}C \,\leq\, \left(dk^{-1/(s+t)}\right)^{4/(C-7\epsilon/8)},\]
    from where it follows that
    \[\ell_0 \,=\, \frac{C\,d}{\log\left(dk^{-1/(s+t)}\right)} \,\geq\, d\left(dk^{-1/(s+t)}\right)^{-4/(C-7\epsilon/8)}.\]
    This completes the proof of the base case. Now assume for some $j' \in \N$, \ref{item: lower bound on ell by power of d} holds
    for all $0 \leq j \leq j'$. We will show $\ell_{j'+1} \geq d\left(dk^{-1/(s+t)}\right)^{-4/(C-7\epsilon/8)}$.
    To do so, we will compute a sufficient lower bound on $k_j$ for each $0 \leq j \leq j'$, which will allow us to prove the desired bound on $\ell_{j'+1}$.
    We start by showing $k_{j}$ is bounded from below by a constant (independent of $j$); we then use this to compute an upper bound on $u_j$, which we then use to prove a stronger lower bound on $k_{j}$.
    
    Let $0 \leq j \leq j'$.
    First note
    \begin{align}
        \frac{\eta}{\ell_j} &= \frac{\mu}{\log\left(dk^{-1/(s+t)}\right)}\frac{1}{\ell_j} \nonumber \\
        &\leq \frac{\mu}{\log\left(dk^{-1/(s+t)}\right)}\frac{\left(dk^{-1/(s+t)}\right)^{4/(C-7\epsilon/8)}}{d} \nonumber \\
        &\leq \frac{10}{9\log(d)}\frac{(2d)^{4/(4+\epsilon/8)}}{d} \nonumber \\
        &\leq \frac{1}{d^{\eps/50}}, \label{eq: eta ell bound}
    \end{align}
    where the inequalities follow by \eqref{eq:bounds on dk}, the bound on $\ell_j$ from the induction hypothesis, and for $d$ large enough. 
    Now recall for any $\epsilon > 0$, there exists $x_0 > 0$ such that $1-x\geq \exp\left(-\frac{x}{1-\epsilon}\right)$ whenever $0 < x < x_0$. Thus for $d$ sufficiently large, by \eqref{eq: eta ell bound} it follows $\eta/\ell_j$ is sufficiently small for the following to hold:
    \[k_j = \left(1 - \frac{\eta}{\ell_j}\right)^{2d_j} \geq\, \exp\left(- \frac{\eta 2d_j}{\ell_j (1- \epsilon/C)}\right) \geq\, \exp\left(-\frac{2\eta d_0}{\ell_0(1-\epsilon/C)}\right),\]
    where the second inequality follows from \ref{item: ratio decreaes by uncolor}.
    
    We now have:
        \[\exp\left(-\frac{2\eta d_0}{\ell_0(1-\epsilon/C)}\right) \,=\, \exp\left(-\frac{2\mu}{(1-\epsilon/C)C}\right) \,=\, \exp\left(-\frac{2\mu}{C - \epsilon}\right).\]
    Thus \[k_j \,\geq\, \exp\left(-\frac{2\mu}{C-\epsilon}\right).\]
    Now we use this lower bound on $k_j$ to derive an upper bound on $u_j$ as follows:
    
    \begin{align*}
    u_j = \left(1  - \frac{\eta}{\ell_j}\right)^{k_j\ell_j/2} &\leq\, \exp\left(-\frac{\eta k_j \ell_j}{2\ell_j}\right) \,=\, \exp\left(-\frac{\eta k_j}{2}\right) \,\leq\, \exp\left(-\frac{\eta}{2}\exp\left(-\frac{2\mu}{C-\epsilon}\right)\right)\\
    & \leq\, 1 - \left(1-\epsilon/(4C)\right)\frac{\eta}{2}\exp\left(-\frac{2\mu}{C-\epsilon}\right),
    \end{align*}
    where the last inequality follows by the fact that for any $\epsilon > 0$, there exists $x_0 > 0$ such that $1-x\geq \exp\left(-\frac{x}{1-\epsilon}\right)$ whenever $0 < x < x_0$ (we assume $d$ is large enough so that we may apply this inequality).
    
    As the expression above is independent of $j$, we may use it to compute an upper bound on $\frac{d_j}{\ell_j}$ as follows:
    
    \begin{equation}\label{eq: upper bound on dj over lj}
        \frac{d_j}{\ell_j} \,=\, \frac{d_0}{\ell_0} \prod_{m = 0}^{j-1} u_m \,\leq\, \frac{d_0}{\ell_0}\left(1 - \left(1-\epsilon/(4C)\right)\frac{\eta}{2}\exp\left(-\frac{2\mu}{C - \epsilon}\right)\right)^{j}.
    \end{equation}
    We use this to improve our lower bound on $k_j$.
    Assuming $d$ is large enough, by \eqref{eq: eta ell bound} we have $\eta/\ell_j$ is small enough such that:
    \begin{align*}
        k_j &= \left(1 - \frac{\eta}{\ell_j}\right)^{2d_j} \geq\, \exp\left(- \frac{\eta}{\ell_j}\frac{2d_j}{(1-\epsilon/(2C))}\right)\\
        &\geq \exp\left(-\frac{2\eta}{(1- \epsilon/(2C))}
        \frac{d_0}{\ell_0}\left(1 - \left(1-\epsilon/(4C)\right)\frac{\eta}{2}\exp\left(-\frac{2\mu}{C-\epsilon}\right)\right)^{j}\right) \\
        & = \exp\left(-\frac{2\mu}{(1- \epsilon/(2C))C}
        \left(1 - \left(1-\epsilon/(4C)\right)\frac{\eta}{2}\exp\left(-\frac{2\mu}{C-\epsilon}\right)\right)^{j}\right)\\
        & = \exp\left(-\frac{2\mu}{C - \epsilon/2}
        \left(1 - \left(1-\epsilon/(4C)\right)\frac{\eta}{2}\exp\left(-\frac{2\mu}{C-\epsilon}\right)\right)^{j}\right),
    \end{align*}
    This bound holds for all $0 \leq j \leq j'$.
    We may now lower bound $\ell_{j'+1}$ as follows:
    \begin{align*}
        \ell_{j'+1} &= \ell_0 \prod_{m = 0}^{j'} k_m \,\geq\, \ell_0 \prod_{m=0}^{j'} \exp\left(-\frac{2\mu}{C-\epsilon/2}
        \left(1 - \left(1-\epsilon/(4C)\right)\frac{\eta}{2}\exp\left(-\frac{2\mu}{C-\epsilon}\right)\right)^{m}\right)\\
        & = \ell_0 \exp\left(-\frac{2\mu}{C-\epsilon/2}\sum_{m=0}^{j'}
        \left(1 - \left(1-\epsilon/(4C)\right)\frac{\eta}{2}\exp\left(-\frac{2\mu}{C-\epsilon}\right)\right)^{m}\right)\\
        & \geq \ell_0 \exp\left(-\frac{2\mu}{(C-\epsilon/2)}
        \frac{1}{\left(1-\left(1 - \left(1-\epsilon/(4C)\right)\frac{\eta}{2}\exp\left(-\frac{2\mu}{C-\epsilon}\right)\right)\right)}\right),
    \end{align*}
    where the last inequality follows by bounding the summation as a partial sum of a geometric series (we may assume that $d$ is large enough implying $\eta$ is small enough so this is indeed true). From this it follows:
    \begin{align*}
        \ell_{j'+1} & \geq \ell_0 \exp\left(-\frac{2\mu}{(C-\epsilon/2)}
        \frac{2}{(1-\epsilon/(4C))\eta\exp\left(-\frac{2\mu}{C-\epsilon} \right)}\right)\\
        &= \ell_0 \exp\left(\frac{-4\log\left(dk^{-1/(s+t)}\right)\exp\left(\frac{2\mu}{C-\epsilon}\right)}{(C-\epsilon/2)(1-\epsilon/(4C))}\right) \\
        & \geq \ell_0 \exp\left(\frac{-4\log\left(dk^{-1/(s+t)}\right)\exp\left(\frac{2\mu}{C-\epsilon}\right)}{C-3\epsilon/4}\right) \qquad \text{(for $\epsilon$ sufficiently small)}\\
        &= \ell_0(dk^{-1/(s+t)})^\xi \qquad \left(\text{where }\xi \defeq \frac{-4\exp\left(\frac{2\mu}{C-\epsilon}\right)}{C-3\epsilon/4} = \frac{-4(1+\epsilon/(8C))}{C-3\epsilon/4}\right)\\
        & =\frac{C\,d}{\log\left(dk^{-1/(s+t)}\right)}\left(dk^{-1/(s+t)}\right)^\xi\\
        & \geq d\left(dk^{-1/(s+t)}\right)^{-4/(C-7\epsilon/8)},
    \end{align*}
    where the last inequality follows for $d$ large enough as $\xi > -4/(C-7\epsilon/8)$.
    This completes the induction and finishes the proof of \ref{item: lower bound on ell by power of d}.

    For \ref{item: existence of i star}, note that by \eqref{eq: upper bound on dj over lj}, it follows for all $i \in \N$,
    \begin{align*}
        \frac{d_i}{\ell_i} &\leq \frac{d_0}{\ell_0}\left(1 - \left(1-\frac{\epsilon}{4C}\right)\frac{\eta}{2}\exp\left(-\frac{2\mu}{C-\epsilon}\right)\right)^{i}\\
        & \leq \frac{d_0}{\ell_0} \exp\left(-i\left(1-\frac{\epsilon}{4C}\right)\frac{\eta}{2}\exp\left(-\frac{2\mu}{C-\epsilon}\right)\right)\\
        & = \frac{\log\left(dk^{-1/(s+t)}\right)}{C}\exp\left(-i\left(1-\frac{\epsilon}{4C}\right)\frac{\eta}{2}\frac{1}{1+\epsilon/(8C)}\right)
        \\
        & \leq \frac{\log\left(dk^{-1/(s+t)}\right)}{C}\exp\left(-\frac{i\,\mu}{8\log \left(dk^{-1/(s+t)}\right)}\right),
    \end{align*} 
    for $\eps$ small enough.
    Now, for 
    \[j^* \defeq \left\lceil\frac{16}{\mu}\log \left(dk^{-1/(s+t)}\right)\log\log \left(dk^{-1/(s+t)}\right)\right\rceil,\]we have:
    \begin{align*}
        \frac{d_{j^*}}{\ell_{j^*}} & \leq \frac{\log\left(dk^{-1/(s+t)}\right)}{C}\exp\left(-\frac{j^*\,\mu}{8\log \left(dk^{-1/(s+t)}\right)}\right)\\
        & \leq \frac{\log\left(dk^{-1/(s+t)}\right)}{C}\exp\left(-2\log\log\left(dk^{-1/(s+t)}\right)\right) \\
        & \leq \frac{1}{C\,\log\left(dk^{-1/(s+t)}\right)} \\
        & \leq \frac{1}{100}
    \end{align*}
    for $d$ large enough by \eqref{eq:bounds on dk}. Thus a minimum integer $i^* \leq j^*$ such that $d_{i^*}/\ell_{i^*} \leq 1/100$ exists. 
\end{proof}

\begin{Lemma}\label{lemma: iteration helper 2}
    There exists $\alpha > 0$ such that the following holds: Let $\epsilon >0$ be sufficiently small. Then there exists $d^\symb$ such that whenever 
    \[d \geq d^\symb, \quad 1 \leq t \leq s, \quad st \,\leq\, \frac{\alpha\,\epsilon\,\log d}{\log \log d}, \quad \text{and} \quad 1/2 < k \,\leq\, d^{(s+t)/10},
        \]
        the following hold for all $i \in \N$ with $0 \leq i < i^*$, where $i^*$ is defined (and guaranteed to exist) by Lemma~\ref{lemma: iteration helper 1}~item~\ref{item: existence of i star}.
    \begin{enumerate}[label=\ep{\normalfont R\arabic*}, leftmargin = \leftmargin + \parindent]
        \item\label{item: di not too large} $d_i$ is sufficiently large: $d_i \geq \tilde{d}$, where $\tilde{d}$ is from Lemma \ref{lemma:iteration},
        \item\label{item: k not too large thm proof} $k$ is not too large: $k \leq d_i^{(s+t)/5}$.
        \item\label{item: ell bounded by d} $\ell_i$ is bounded below and above in terms of $d_i$: $4\eta\,d_i < \ell_i < 100d_i$,
        \item\label{item: s and t bounded} $s$ and $t$ are bounded in terms of $d_i$: $1 \leq t \leq s$ and $st \leq \dfrac{\tilde{\alpha}\log d_i}{\log\log d_i}$,
        \item\label{item: eta small} $\eta$ is sufficiently small: $\dfrac{1}{\log^5d_i} < \eta < \dfrac{1}{\log (d_ik^{-1/(s+t)})}$,
        \item\label{item: beta small} $\beta_i$ is small: $\beta_i \leq 1/10$. \ep{Note that the bound $\beta_i \geq d_i^{-1/(200st)}$ holds by definition.}
    \end{enumerate}
\end{Lemma}

\begin{proof} 
    Once again, we will not explicitly compute $d^\symb$, but instead simply take $d$ to be sufficiently large when needed.
    For all $i < i^*$, we have $d_i \geq \ell_i/100$, and by \ref{item: lower bound on ell by power of d} of Lemma \ref{lemma: iteration helper 1}, it follows for $d$ large enough that $\ell_i \geq d\left(dk^{-1/(s+t)}\right)^{-4/(C-7\epsilon/8)}$. Note the following as a result of \eqref{eq:bounds on dk}:
    \begin{equation}\label{eq: di bounded by d to epsilon over 40}
        d_i \geq \frac{1}{100}d\left(dk^{-1/(s+t)}\right)^{-4/(C-7\epsilon/8)} \geq d^{\epsilon/40}. \stepcounter{equation}\tag{\theequation}
    \end{equation}
    Using the above, note for $d^\symb$ large enough, namely $d^\symb \geq {\tilde{d}}^{40/\epsilon}$, it follows
    \[d_i \geq d^{\epsilon/40} \geq \tilde{d},\]
    showing \ref{item: di not too large}.

    From \eqref{eq: di bounded by d to epsilon over 40} it also follows that when $k \leq d^{\epsilon(s+t)/200}$, we have:
    \[k \,\leq\, d^{\epsilon(s+t)/200} \,\leq\, d_i^{40(s+t)/200} = d_i^{(s+t)/5}.\]
    Otherwise, when $k > d^{\epsilon(s+t)/200}$, we have $C = 8$. Using the first inequality in \eqref{eq: di bounded by d to epsilon over 40}, it follows that:
    \begin{align*}
        d_i^{(s+t)/5} &\geq \left(\frac{1}{100}d\left(dk^{-1/(s+t)}\right)^{-4/(C-7\epsilon/8)}\right)^{(s+t)/5}\\
        & = \left(\frac{1}{100} d^{\left(1-\frac{4}{8-7\epsilon/8}\right)}k^{\left(\frac{4}{(s+t)(8-7\epsilon/8)} \right)}  \right)^{(s+t)/5}\\
        & \geq \left(d^{\left(1-\frac{4}{8-7\epsilon/4}\right)}k^{\left(\frac{4}{(s+t)(8-7\epsilon/8)} \right)}  \right)^{(s+t)/5}\\
        & \geq  \left( k^{\frac{10}{s+t}\left(1-\frac{4}{8-7\epsilon/4}\right)+\frac{4}{(s+t)(8-7\epsilon/8)}}  \right)^{(s+t)/5} \qquad \left(\text{as } k \leq d^{(s+t)/10}\right) \\
        & = k^{2\,\left(1-\frac{4}{8-7\epsilon/4}\right)+\frac{4}{5(8-7\epsilon/8)}}.
    \end{align*}
    Let us consider the exponent above.
    We have
    \begin{align*}
    2\,\left(1-\frac{4}{8-7\epsilon/4}\right)+\frac{4}{5(8-7\epsilon/8)}&=
        2 - \frac{4}{5}\left(\frac{10}{8 - 7\epsilon/4} - \frac{1}{8 - 7\epsilon/8}\right) \\&= 2 - \frac{4\,\left(10(8 - 7\epsilon/8) - (8 - 7\epsilon/4)\right)}{5(8 - 7\epsilon/4)(8 - 7\epsilon/8)} \\
        &= 2 - \frac{4}{5}\left(\frac{72 - 7\eps}{64 - 21\eps + 49\eps^2/32}\right) \\
        &\geq 1,
    \end{align*}
    for $\epsilon$ sufficiently small.
    Therefore, $k \leq d_i^{(s+t)/5}$ completing the proof of \ref{item: k not too large thm proof}.

    We also have
    \[\frac{\ell_i}{d_i} \geq \frac{\ell_0}{d_0} = \frac{C\,\eta}{\mu}.\]
    Since $\mu < 1$ for $\epsilon$ sufficiently small, this shows 
    \[\ell_i \geq \frac{C\,\eta}{\mu}d_i \geq C\,\eta d_i \geq 4\eta d_i\]
    proving \ref{item: ell bounded by d} as the upper bound always holds for $i < i^*$.

    As $\log(x)/\log\log(x)$ is increasing for $x$ large enough, it follows we can take $\tilde d$ sufficiently large such that
    \[st \,\leq\, \frac{\alpha \epsilon \log d}{\log \log d} \,\leq\, \frac{\alpha\epsilon\log\left(d_i^{40/\epsilon}\right)}{\log\log\left(d_i^{40/\epsilon}\right)} \,\leq\, \frac{40\alpha\log d_i}{\log\log d_i} \,\leq\, \frac{\tilde{\alpha}\log d_i}{\log\log d_i},\]
    when $\alpha \leq \tilde{\alpha}/40$. This proves \ref{item: s and t bounded}.
    
    Again using the fact that $dk^{-1/(s+t)} \leq 2d$, we see for $d$ large enough that
    \[\frac{1}{\log^5(d_i)} \,\leq\, \frac{1}{\log^5(d^{\epsilon/40})} \,=\, \frac{(40/\epsilon)^5}{\log^5(d)} < \frac{\mu}{\log(2d)} \,\leq\, \eta.\]
    Furthermore, for $\epsilon$ sufficiently small, we have $\mu < 1$. Thus, as $d_i \leq d$, we may conclude that
    \[\eta \,\leq\, \frac{1}{\log\left(dk^{-1/(s+t)}\right)} \,\leq\, \frac{1}{\log\left(d_ik^{-1/(s+t)}\right)}.\]
    This shows \ref{item: eta small}.
    Let $0\leq i' \leq i^*-1$ be the largest integer such that
    $\beta_{i'} = d_{i'}^{-1/(200st)}$.
    Then 
    \begin{align*}
        \beta_{i^*-1} &= (1 + 36\eta)^{i^*-1 - i'} d_{i'}^{-1/ (200st)} \\
        &\leq (1+36\eta)^{i^*}\left(d^{\epsilon/40}\right)^{-1/ (200st)} \\
        &\leq \exp\left(36\eta i^*\right)d^{-\epsilon/(800st)}.
    \end{align*}
    By Lemma~\ref{lemma: iteration helper 1}~item~\ref{item: existence of i star}, we may conclude the following:
    \begin{align*}
        \beta_{i^*-1} & \leq \exp\left(36\eta i^*\right)d^{-\epsilon/(800st)}\\
        & \leq \exp\left(576\,\frac{\eta}{\mu}\log \left(dk^{-1/(s+t)}\right)\log\log \left(dk^{-1/(s+t)}\right) \right)d^{-\epsilon/(800st)}\\
        & = \left(\log\left(dk^{-1/(s+t)}\right)\right)^{576}d^{-\epsilon/(800st)}\\
       & \leq \left(\log(2d)\right)^{576}d^{-\epsilon/(800st)}\\
       & \leq \left(\log(2d)\right)^{576}\,d^{-\dfrac{\epsilon\log\log d}{800\alpha\epsilon\log d}}\\
       & \leq \log(d)^{1000}\log(d)^{-\frac{1}{800\alpha}}\\
       & = \log(d)^{1000-\frac{1}{800\alpha}}\\
       & \leq \frac{1}{10},
    \end{align*}
    for $d$ large enough and $\alpha$ small enough. This shows \ref{item: beta small} and completes the proof.
\end{proof}

We are now ready to prove the main theorem:
\begin{theo*}[Restatement of Theorem~\ref{theo:main_theo}]
    There exists a constant $\alpha > 0$ such that for every $\epsilon > 0$, there is $d^* \in \N$ such that the following holds. 
    Suppose that $d$, $s$, $t \in \N$, and $k \in \R$ satisfy
    \[
        d \geq d^*,\quad 1 \,\leq\, t \,\leq\, s, \quad st \leq \frac{\alpha\,\epsilon\,\log d}{\log \log d}, \quad \text{and} \quad 1/2 \,\leq\, k \,\leq\, d^{(s+t)/10}.
    \]
    Define $C$ as follows:
    \[C \defeq \left\{\begin{array}{cc}
          4 + \eps & k \,\leq\, d^{\eps(s+t)/200}, \\
          8 & \text{otherwise.}
        \end{array}\right.\]
    If $G$ is a graph and $\mathcal{H} = (L,H)$ is a correspondence cover of $G$ such that:
    \begin{enumerate}[label=\ep{\normalfont\roman*}]
        \item $H$ is $(k, K_{s,t})$-locally-sparse,
        \item $\Delta(H) \leq d$, and
        \item $|L(v)| \geq C\,d/\log \left(dk^{-1/(s+t)}\right)$ for all $v \in V(G)$.
    \end{enumerate}
    Then, $G$ admits a proper $\mathcal{H}$-coloring.
\end{theo*}

\begin{proof}

Let $\alpha$ be as defined in Lemma \ref{lemma: iteration helper 2}. Let $\epsilon > 0$ be sufficiently small. We will not explicitly compute $d^*$ but simply take $d$ large enough when needed. Thus let $d$ be sufficiently large so that we may apply Lemma \ref{lemma: iteration helper 2}. 

Let $G$ be a graph and $\mathcal{H} = (L, H)$ be a correspondence cover of $G$ satisfying the hypotheses of the theorem. Set
    \begin{align*}
        G_0 \defeq G, \qquad \mathcal{H}_0 = (L_0, H_0) \defeq \mathcal{H}
    \end{align*}

By slightly modifying $\epsilon$ if necessary, and taking $d$ sufficiently large, we may assume $\ell_0$ is an integer. By removing some of the vertices from $H$ if necessary, we may assume that $|L(v)| = \ell_0$ for all $v \in V(G)$.

At this point, we recursively define a sequence of graphs $G_i$ with correspondence cover $\mathcal{H}_i = (L_i, H_i)$ for $0 \leq i \leq i^*$ by iteratively applying Lemma \ref{lemma:iteration} as follows: Lemma \ref{lemma: iteration helper 2} shows conditions \ref{item:d_large_3.1}--\ref{item:eta} of Lemma \ref{lemma:iteration} (with $d_i$ in place of $d$, $\ell_i$ in place of $\ell$) hold for each $0 \leq i \leq i^*-1$. 
Therefore, if $G_i$ is a graph with correspondence cover $\mathcal{H}_i = (L_i, H_i)$ such that for $\beta_i$ as defined at the start of the section,
\begin{enumerate}[label=\ep{\normalfont S\arabic*}]
    \item\label{item:6} $H_i$ is $(k, K_{s,t})$-locally-sparse,
    \item\label{item:7} $\Delta(H_i) \leq 2d_i$,
    \item\label{item:8} the list sizes are roughly between $\ell_i/2$ and $\ell_i$: \[(1-\beta_i)\ell_i/2 \,\leq\, |L_i(v)| \,\leq\, (1+\beta_i)\ell_i \quad \text{for all } v \in V(G_i),\]
    \item\label{item:9} average color-degrees are smaller for vertices with smaller lists of colors: \[\overline{\deg}_{\mathcal{H}_i}(v) \,\leq\, \left(2 - (1 - \beta_i)\frac{\ell_i}{|L_i(v)|}\right)d_i \quad \text{for all } v\in V(G_i),\]
\end{enumerate}
then $G_i$, and $\mathcal{H}_i$ satisfy the required hypotheses of Lemma~\ref{lemma:iteration}.
In particular, there exists a partial $\mathcal{H}_i$-coloring $\phi_i$ of $G_i$ and an assignment of subsets $L_{i+1}(v) \subseteq (L_i)_{\phi_i}(v)$ to each vertex $v \in V(G_i) \setminus \dom(\phi_i)$ such that, setting
\[
    G_{i+1} \defeq G_i[V(G_i) \setminus \dom(\phi_i)], \qquad H_{i+1} \defeq H_i \left[\textstyle\bigcup_{v \in V(G_{i+1})} L_{i+1}(v)\right],
\]
\[
    \text{and} \qquad \mathcal{H}_{i+1} \defeq (L_{i+1}, H_{i+1}),
\]
we have items \ref{item:6}--\ref{item:9} hold for $G_{i+1}$ and $\mathcal H_{i+1}$ with parameter $\beta_{i+1}$.
Thus we may iteratively apply Lemma \ref{lemma:iteration} $i^*$ times, starting with $G_0$ and $\mathcal{H}_0$, to define $G_i$ and $\mathcal{H}_i$ for $0 \leq i \leq i^*$.

We now show $G_{i^*}$ and $\mathcal{H}_{i^*}$ satisfy the hypotheses of Proposition \ref{prop:final_blow}.
By \ref{item:8}, it follows for each $v \in V(G_{i^*})$,
\[|L_{i^*}(v)| \geq (1-\beta_{i^*})\ell_{i^*}/2\geq (1-(1+36\eta)\beta_{i^*-1})\ell_{i^*}/2.\]

Since $\beta_{i^*-1} \leq \frac{1}{10}$ and $\eta \leq \frac{1}{100}$ for $d$ large enough, it follows 
\[(1-(1+36\eta)\beta_{i^*-1})\ell_{i^*}/2 \,\geq\, \left(1-\frac{1+36/100}{10}\right)\ell_{i^*}/2 \,\geq\, \frac{1}{4}\ell_{i^*} \]
for $d$ large enough.
As $\ell_{i^*} \geq 100d_{i^*}$, we have:
\[\frac{1}{4}\ell_{i^*} \geq 25d_{i^*}.\]
We have by \ref{item:7} that $\Delta(H_{i^*}) \leq 2d_{i^*}$, thus for all $c \in V(H_{i^*})$, we have 
\[ 25d_{i^*} \geq 10\deg_{H_{i^*}}(c).\]
Putting this chain of inequalities together yields $|L_{i^*}(v)| \geq 8 \deg_{H_{i^*}}(c)$, as desired. 

Therefore by Proposition \ref{prop:final_blow} there exists an $\mathcal{H}_{i^*}$-coloring of $G_{i^*}$, say $\phi_{i^*}$.
Letting $\phi_i$ be  the partial $\mathcal{H}_{i}$-colorings of $G_i$ for $0 \leq i \leq i^*-1$ guaranteed at each application of Lemma \ref{lemma:iteration}, it follows \[\bigcup_{i = 0}^{i^*}\phi_i\]
is a proper $\mathcal{H}$-coloring of $G$, as desired.
\end{proof}

\section{Concluding remarks}\label{subsec:open_probs}

We conclude with a description of potential future directions of inquiry.
In this paper, we show an asymptotically improved bound on the (correspondence) chromatic number of a graph $G$ of maximum degree $\Delta$ when $G$ has at most $k \ll \Delta^{s+t}$ copies of $K_{s, t}$ in the neighborhood of any vertex. 
It is natural to ask if we can extend the range of $k$ to $k = \Theta\left(\Delta^{s+t}\right)$.
We note that for $k = \binom{\Delta}{s+t}\binom{s+t}{t}$, in the worst case the graph is complete, implying 
\[\chi_c(G) = \Delta + 1 = O\left(\frac{\Delta}{\log\left(\Delta k^{-1/(s+t)}\right)}\right).\]
Therefore, we suspect our results should extend to all values of $k$ in this range (although the proof would require some new ideas; recall the discussion at the end of \S\ref{sec: proof overview}).
Furthermore, in light of our results and Conjecture~\ref{conj:AKS}, we make the following more general conjecture (a correspondence coloring version of Conjecture~\ref{conj:AKS} appeared in \cite{anderson2023colouring}).

\begin{conj}\label{conj:aks_loc_sparse}
    For every graph $F$, the following holds for $\Delta \in \N$ large enough.
    Let 
    \[1/2 \,\leq\, k \,<\, \Delta^{|V(F)|},\]
    and let $G$ be a $(k,F)$-locally-sparse graph of maximum degree $\Delta$.
    Then, 
    \[\chi_c(G) = O\left(\frac{\Delta}{\log\left(\Delta k^{-1/|V(F)|}\right)}\right),\] 
    where the constant factor in the $O(\cdot)$ may depend on $F$.
\end{conj}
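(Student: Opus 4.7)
The plan is to attack Conjecture \ref{conj:aks_loc_sparse} by separately addressing the two respects in which it extends Corollary \ref{corl:DP_chromatic_number}: the restriction to bipartite $F$, and the constraint $k \leq \Delta^{|V(F)|/10}$. I would handle these in sequence and combine at the end.

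For the range of $k$ in the bipartite case, the obstruction is essentially technical. The restriction $k \leq d^{(s+t)/10}$ in Theorem \ref{theo:main_theo} arises from the Talagrand-style concentration inequality used to verify that the nibble step's random partial coloring behaves as expected; the variance degrades once too many $K_{s,t}$-copies are present, as the relevant random variables become correlated through shared configurations. I would attempt a refined concentration analysis — for instance, decomposing each neighborhood into subregions of uniform $K_{s,t}$-density and controlling bad events per subregion, or employing a martingale exchange argument in place of Talagrand — which should push the range close to $\Delta^{|V(F)|}$, although attaining the full conjectured range may require a fundamentally different concentration paradigm.

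For the non-bipartite case, the central obstruction is that $(k, F)$-local sparsity with respect to a non-bipartite $F$ does not descend to any bipartite subgraph: if $F' \subseteq F$ is bipartite with $V(F') = V(F)$, then every $F$-copy contains an $F'$-copy, so deleting edges only \emph{increases} the copy count, and $(k, F)$-locally-sparse neighborhoods may have arbitrarily many $F'$-copies. I would explore three tentative routes: (i) a supersaturation-type dichotomy showing that a neighborhood with many bipartite $K_{s,t}$-copies is either structurally close to bipartite (permitting a direct coloring via Theorem \ref{theo:ck}-style bounds) or contains many $F$-copies (violating $(k, F)$-sparsity); (ii) induction on $\chi(F)$, passing from $F$ to $F - v$ for a judiciously chosen $v \in V(F)$ and merging partial colorings via a palette-sharing scheme analogous to Proposition \ref{prop: G F free implies H F free}; and (iii) a direct adaptation of the nibble machinery to count $F$-copies rather than $K_{s,t}$-copies in neighborhoods, via a higher-order concentration inequality tailored to non-bipartite patterns.

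The hard step is the non-bipartite case. Specializing to $k < 1$ in Conjecture \ref{conj:aks_loc_sparse} recovers the Alon--Krivelevich--Sudakov conjecture for every clique, via the equivalence between $(0, K_r)$-local sparsity and $K_{r+1}$-freeness — open for $r \geq 3$. Any complete resolution therefore encompasses a notoriously difficult open problem and is unlikely to follow from the Rödl nibble framework alone. A sensible first target is the weaker bound $O(\Delta \log\log\Delta/\log\Delta)$ for arbitrary $F$, matching Johansson's qualitative result through Shearer-type entropy estimates for independent sets in locally sparse graphs (in the spirit of \cite{Molloy,bernshteyn2019johansson}).
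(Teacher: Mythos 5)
There is a genuine gap here: the statement you are addressing is Conjecture~\ref{conj:aks_loc_sparse}, which the paper explicitly leaves open, and your submission is a research plan rather than a proof. None of the proposed routes is carried out: the ``refined concentration analysis'' for the extended range of $k$, the supersaturation dichotomy, the induction on $\chi(F)$, and the higher-order nibble adaptation are all named but not executed, and no lemma is actually established. Moreover, as you yourself note, the case $k<1$ with $F$ a clique $K_r$ is equivalent to the Alon--Krivelevich--Sudakov conjecture for $K_{r+1}$-free graphs, which is open for $r\geq 3$; so no argument of the kind sketched could close the statement as written. The paper's own position is the same: it proves only the bipartite case for $k\leq \Delta^{|V(F)|/10}$ (Corollary~\ref{corl:DP_chromatic_number}) and observes that Theorem~\ref{theo:DKPSCorrespondence} combined with Theorem~\ref{theo:KST_multiple} handles bipartite $F$ up to $k\leq \Delta^{|V(F)|}/(\log\Delta)^{c}$, leaving everything else to future work.

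One substantive inaccuracy in your diagnosis of the bipartite bottleneck: the restriction $k\leq d^{(s+t)/10}$ in Theorem~\ref{theo:main_theo} does not come from the Talagrand-style concentration step. The paper is explicit that local sparsity enters the analysis only in the expectation bound $\E\bigl[\overline{\deg}_{\mathcal{H}'}(v)\bigr]$ (Lemma~\ref{expectationKeptUncolor}, via Theorem~\ref{theo:KST_multiple}), while the concentration arguments are imported unchanged from \cite{anderson2022coloring} and make no use of sparsity. The actual obstruction is that one needs $k\ll d_i^{s+t}$ at \emph{every} iteration of the nibble, where $d_i$ decays; the lower bound $d_i\geq d^{\gamma}$ with $\gamma=\gamma(C)$ forces $k\leq d^{r(s+t)}$ for some $r<1$, and the exponent $1/10$ is chosen for computational convenience. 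So improving the concentration machinery would not, by itself, extend the range of $k$; one would instead need either a sharper expectation estimate when neighborhoods of colors are dense, or a way to avoid the loss coming from the shrinking $d_i$. Your fallback target of $O(\Delta\log\log\Delta/\log\Delta)$ for arbitrary $F$ via Shearer-type estimates is a reasonable first step and is precisely what the paper suggests in \S\ref{subsec:open_probs}, but it too remains unproven here.
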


We note that a simple application of Theorem \ref{theo:DKPSCorrespondence} together with Proposition~\ref{prop:KST_multiple} proves Conjecture \ref{conj:aks_loc_sparse} for bipartite graphs $F$ and the following range of $k$:
\[|V(F)|^2\Delta^{|V(F)| - 1} \,\leq\, k \,\leq\, \frac{\Delta^{|V(F)|}}{(\log \Delta)^c},\]
for some constant $c \defeq c(F) > 0$.
As a result of Corollary~\ref{corl:DP_chromatic_number}, it remains to consider the range
\[ \Delta^{|V(F)|/10}\,<\, k \,<\, |V(F)|^2\Delta^{|V(F)| - 1}, \qquad \text{and} \qquad k \,>\, \frac{\Delta^{|V(F)|}}{(\log \Delta)^c},\]
in order to verify the conjecture for bipartite graphs.

We remark that the constant $C$ in Corollary \ref{corl:DP_chromatic_number} is independent of $F$. 
This is in line with several recent results surrounding Conjecture \ref{conj:AKS} which prove the constant in the $O(\cdot)$ is independent of $F$ (such as \cite{PS15, Molloy} for $F = K_3$, \cite{DKPS} for fans and cycles, \cite{anderson2023colouring} for $F$ bipartite, and \cite{anderson2022coloring} for $F$ almost-bipartite; see Table~\ref{table:bounds}). 
To this end, we conjecture the following, which is a strengthening of \cite[Conjecture~1.7]{anderson2022coloring}:

\begin{conj}\label{conj:aks_loc_sparse universal constant}
    There exists a universal constant $C \in \R$ such that for every graph $F$, the following holds for $\Delta \in \N$ large enough.
    Let 
    \[1/2 \,\leq\, k \,<\, \Delta^{|V(F)|},\]
    and let $G$ be a $(k,F)$-locally-sparse graph of maximum degree $\Delta$.
    Then, 
    \[\chi_c(G) \leq \frac{C\,\Delta}{\log\left(\Delta k^{-1/|V(F)|}\right)}.\] 
\end{conj}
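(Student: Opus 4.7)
The plan is to extend Corollary \ref{corl:DP_chromatic_number} in two directions while preserving a universal constant $C$: first, enlarge the admissible range from $k \leq \Delta^{|V(F)|/10}$ up to $k < \Delta^{|V(F)|}$, and second, relax the bipartiteness assumption on $F$. The bipartite case with $k \leq \Delta^{|V(F)|}/(\log \Delta)^{O(1)}$ is already essentially handled by combining Theorem \ref{theo:DKPSCorrespondence} with a multiple-copy K\H{o}v\'ari--S\'os--Tur\'an bound, as remarked after Conjecture \ref{conj:aks_loc_sparse}; the genuinely new difficulties are pushing past this polylog loss and handling non-bipartite $F$, both with an absolute constant.

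For bipartite $F$ in the full range, I would first convert $(k, F)$-local-sparsity into $(k', K_{s,t})$-local-sparsity with $s + t = |V(F)|$ and $k' = O_F(k)$, so that Theorem \ref{theo:main_theo} directly covers the low-$k$ regime. To push $k$ higher, the natural idea is a layered iterative coloring: partition the palette into $O(1)$ blocks of size $\Theta(\Delta/\log(\Delta k^{-1/(s+t)}))$, apply the R\"odl nibble within each block, and verify that after each round both the effective degree and the local $K_{s,t}$-count drop by comparable multiplicative factors so that the inductive hypothesis is reproduced. Iterating $O(\log \Delta)$ rounds should reduce to a regime finishable by a greedy Lov\'asz Local Lemma argument. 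Crucially, the constant $C$ must not accrue a multiplicative factor per round; this is plausible because each round is a scale-invariant instance of the same nibble argument used to prove Theorem \ref{theo:main_theo}. For non-bipartite $F$, I would extract a large bipartite subgraph $F_b \subseteq F$ (for instance via a random bipartition of $V(F)$, which keeps in expectation $(1 - 2^{1-\chi(F)})|E(F)|$ edges) and argue that $(k, F)$-local-sparsity forces $(k', F_b)$-local-sparsity with $k' \leq k \cdot \mathrm{poly}(|V(F)|)$, then invoke the bipartite case on $F_b$.

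The main obstacle will be the regime $k$ approaching $\Delta^{|V(F)|}$. Here $\log(\Delta k^{-1/|V(F)|}) = O(1)$, so the target is essentially a Brooks-type bound $O(\Delta)$; but smoothly interpolating between the nibble regime and the Brooks regime is subtle, because the standard nibble hypothesis --- that a random proper partial coloring leaves a positive constant fraction of colors available at a typical vertex --- degrades precisely in this range. A multi-scale nibble adapting its step size to the local density, or an entropy-compression argument in the style of Molloy's proof for $K_3$-free graphs, is likely needed. A second, subtler obstacle is the universality of $C$: Theorem \ref{theo:main_theo} already achieves $C \leq 8$ independently of $s, t$ via delicate second-moment estimates, and ensuring that the bipartite-extraction step for non-bipartite $F$ does not introduce a $\chi(F)$-dependent factor --- thus preserving a truly universal constant --- is where the proposal is most likely to require essentially new ideas beyond the current nibble toolkit.
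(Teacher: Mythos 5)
The statement you are trying to prove is Conjecture~\ref{conj:aks_loc_sparse universal constant}: the paper does not prove it, but explicitly poses it as an open problem, noting only that Corollary~\ref{corl:DP_chromatic_number} verifies the bipartite case in the restricted range $k \leq \Delta^{|V(F)|/10}$ and that extending $k$ to the full range already faces the obstacles described in \S\ref{sec: proof overview}. So there is no paper proof to compare against, and your proposal should be judged as a research plan; as such it has two concrete gaps beyond the difficulties you yourself flag.

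First, the reduction of non-bipartite $F$ to a bipartite subgraph $F_b \subseteq F$ goes in the wrong direction. $(k,F)$-local-sparsity gives no control whatsoever on the number of copies of a \emph{subgraph} $F_b$ of $F$: for $F = K_3$ and $F_b = P_3$, the neighborhood of a vertex can induce $K_{\Delta/2,\Delta/2}$, which is $(0,K_3)$-sparse yet contains $\Theta(\Delta^3)$ copies of $P_3$, so no bound of the form $k' \leq k\cdot\mathrm{poly}(|V(F)|)$ can hold. (The paper's reduction works because it passes from $F$ to a \emph{supergraph} $K_{s,t}$ on the same vertex set, which is only possible when $F$ is bipartite.) Note also that even the special case $k<1$ of your claim for non-bipartite $F$ is exactly Conjecture~\ref{conj:AKS} of Alon--Krivelevich--Sudakov, which the paper records as open for every connected $F$ that is not almost-bipartite; a valid bipartite-extraction step of the kind you describe would resolve that conjecture outright, which is a strong signal the step cannot be this cheap. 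Second, your layered-nibble plan for pushing $k$ up to $\Delta^{|V(F)|}$ assumes that after each round ``the local $K_{s,t}$-count drops by comparable multiplicative factors'' to the degree. There is no mechanism for this: the new cover $H'$ is an induced subgraph of $H$, so it inherits the same absolute bound $k$, while the degree parameter $d_i$ shrinks geometrically; hence the \emph{relative} sparsity $k/d_i^{s+t}$ deteriorates with each round. This is precisely the obstruction the paper isolates (the requirement $k \ll d_i^{s+t}$ at every iteration, which is what forces $k \leq d^{r(s+t)}$ with $r<1$ and hence the exponent $|V(F)|/10$ in Theorem~\ref{theo:main_theo}), and your proposal does not supply a new idea to overcome it --- the suggested multi-scale nibble or entropy compression are named but not developed. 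As it stands, the proposal neither proves the conjecture nor goes beyond what the paper already establishes or identifies as the difficulty.
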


We note that for an arbitrary graph, a simple counting argument shows that
\[k \leq \binom{\Delta}{|V(F)|}\,\frac{|V(F)|!}{|\mathsf{Aut}(F)|},\]
where the bound is tight when considering the number of copies of $F$ in $K_\Delta$.
As the above is strictly less than $\Delta^{|V(F)|}$, the bounds on $k$ in Conjecture~\ref{conj:aks_loc_sparse} and Conjecture~\ref{conj:aks_loc_sparse universal constant} cover all possible cases.

We also note that Corollary~\ref{corl:DP_chromatic_number} verifies this conjecture in the case when $F$ is bipartite for a smaller range of $k$ than stated. 
Additionally, we remark that Conjecture~\ref{conj:aks_loc_sparse universal constant} is quite challenging. 
Indeed, Conjecture~\ref{conj:AKS} itself is considered ambitious (see the discussion at the end of \cite[\S1.1]{anderson2022coloring}).
As such, falsifying Conjecture~\ref{conj:aks_loc_sparse universal constant} may be a much more feasible task than disproving Conjecture~\ref{conj:AKS} and so we pose it as an interesting question.

As mentioned earlier, the best known upper bound for Conjecture~\ref{conj:AKS} and arbitrary $F$ is $O\left(\Delta\log\log\Delta/\log\Delta\right)$ due to Johansson \cite{Joh_sparse}.
In fact, his proof holds in the list coloring setting as well.
The constant factor was improved by Molloy \cite{Molloy}, and Bernshteyn extended the result to correspondence coloring \cite{bernshteyn2019johansson}.
A natural question to ask is whether we can prove a similar bound for $(k, F)$-locally-sparse graphs.
This would be the first step toward proving Conjecture~\ref{conj:aks_loc_sparse} and Conjecture~\ref{conj:aks_loc_sparse universal constant}.

\subsection*{Acknowledgements}
We thank J\'ozsef Balogh, Peter Bradshaw, and Ethan White for pointing out an error in an earlier version of this manuscript, and Anton Bernshteyn for his helpful comments.
We also thank the anonymous referee for their helpful suggestions.

\vspace{0.1in}
\printbibliography
\vspace{0.1in}

\appendix
\section{Proof of Proposition~\ref{prop: G F free implies H F free}}\label{sec:appendix}

We restate the proposition here for convenience.

\begin{prop*}[\ref{prop: G F free implies H F free}]
    Let $F$ and $G$ be graphs and let $\mathcal{H} = (L, H)$ be a correspondence cover of $G$.
    For $k \in \R$, the following holds:
    \begin{enumerate}[label=\ep{\normalfont S\arabic*}]
        \item\label{item:local} If $G$ is $(k, F)$-locally-sparse, then so is $H$.
        
        \item\label{item:global} If $F$ satisfies
        \[\forall\, u, v \in V(F),\quad uv \notin E(F) \implies N_F(u) \cap N_F(v) \neq \0,\]
        then if $G$ is $F$-free, $H$ is as well.
    \end{enumerate}
\end{prop*}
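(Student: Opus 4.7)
The plan is to prove both parts by exploiting the matching structure \ref{dp:matching} of a correspondence cover, which lets us transfer subgraph information from $H$ back to $G$ through the underlying-vertex map $L^{-1}$. The central technical observation, used in both items, is that for any $c \in V(H)$, the restriction of $L^{-1}$ to $N_H(c)$ is an edge-preserving injection into $N_G(L^{-1}(c))$: injectivity holds because \ref{dp:list_independent} forbids two neighbors of $c$ from lying in the same list, and the matching condition \ref{dp:matching} forces the underlying vertex of any neighbor of $c$ to be a neighbor of $L^{-1}(c)$ in $G$; edge preservation holds because if $c' c'' \in E(H)$ with $c',c'' \in N_H(c)$, then \ref{dp:matching} again forces $L^{-1}(c')L^{-1}(c'') \in E(G)$.

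For \ref{item:local}, the observation above implies that $H[N_H(c)]$ is isomorphic to a subgraph of $G[N_G(L^{-1}(c))]$. Thus every copy of $F$ contained in $H[N_H(c)]$ maps, vertex by vertex, to a distinct copy of $F$ contained in $G[N_G(L^{-1}(c))]$. If $G$ is $(k, F)$-locally-sparse, then the neighborhood of each $v \in V(G)$ contains at most $\lfloor k \rfloor$ copies of $F$, and hence so does the neighborhood of each $c \in V(H)$.

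For \ref{item:global}, suppose toward a contradiction that $\psi \colon V(F) \hookrightarrow V(H)$ is an embedding witnessing a copy of $F$ in $H$. Define $\pi \defeq L^{-1} \circ \psi$. Edge preservation of $\pi$ is immediate from \ref{dp:matching}: if $uv \in E(F)$, then $\psi(u)\psi(v) \in E(H)$ forces $\pi(u)\pi(v) \in E(G)$. The crux is injectivity, which is the only place the hypothesis on $F$ is used. Suppose $\pi(u) = \pi(v)$ for distinct $u, v \in V(F)$; then $\psi(u),\psi(v)$ lie in a common list $L(\pi(u))$. If $uv \in E(F)$, then $\psi(u),\psi(v)$ would be adjacent in $H$ within this list, contradicting \ref{dp:list_independent}. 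Otherwise, the hypothesis on $F$ supplies a common neighbor $w \in N_F(u) \cap N_F(v)$. Then $\psi(w)$ is adjacent in $H$ to both $\psi(u)$ and $\psi(v)$; but by \ref{dp:matching}, the edges of $H[L(\pi(w)) \cup L(\pi(u))]$ form a matching, so $\psi(w)$ has at most one neighbor in $L(\pi(u)) = L(\pi(v))$, forcing $\psi(u) = \psi(v)$ and contradicting the injectivity of $\psi$. Hence $\pi$ is an injective edge-preserving map and exhibits a copy of $F$ in $G$, contradicting $F$-freeness.

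The substantive step is the injectivity argument for \ref{item:global}: one might naively hope $\pi$ is always injective, but Fig.~\ref{fig:not free} shows that in general two distinct vertices of a copy of $F$ inside $H$ can collapse to the same vertex of $G$. The common-neighbor hypothesis on $F$ is precisely the structural assumption that, together with the matching condition \ref{dp:matching}, rules out this collapse.
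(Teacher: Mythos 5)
Your proof is correct in substance and follows essentially the same route as the paper: transfer copies of $F$ from $H$ to $G$ via the underlying-vertex map $L^{-1}$, using the matching condition for neighborhood containment and edge preservation, and using the common-neighbor hypothesis on $F$ only to force injectivity in \ref{item:global}. One justification needs fixing, though: in your central observation you attribute the injectivity of $L^{-1}$ restricted to $N_H(c)$ to \ref{dp:list_independent}, but list-independence only forbids edges inside a single list; it does not prevent two distinct neighbors of $c$ from lying in the same list $L(u)$. What rules that out is \ref{dp:matching}: since the edges of $H[L(L^{-1}(c)) \cup L(u)]$ form a matching, $c$ has at most one neighbor in each list $L(u)$. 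With that correction, your argument for \ref{item:local} --- including the point that distinct copies in $H[N_H(c)]$ have distinct images, since either their vertex sets differ or, by injectivity of the vertex map, their edge sets do --- coincides with the paper's, and your injectivity argument for \ref{item:global} (adjacent collapse contradicts \ref{dp:list_independent}, non-adjacent collapse produces a common neighbor with two neighbors in one list, contradicting \ref{dp:matching}) is the paper's argument as well.
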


\begin{proof}
    We first prove \ref{item:local}.
    Let $v \in V(G)$ and $c \in L(v)$. From \ref{dp:matching} it follows  $|N_H(c) \cap L(u)| \leq 1$ for each $u \in V(G)$.
    Suppose $H[N_H(c)]$ contains at least $\lfloor k\rfloor + 1$ copies of $F$.
    Let $X$ be one such copy.
    By the earlier observation, the map $c' \to L^{-1}(c')$ is injective for $c' \in X$, i.e., no two vertices of $X$ are in $L(u)$ for any $u \in V(G)$. Thus the map
    $\psi : V(X) \to V(G)$ defined by $\psi(c') = L^{-1}(c')$ is an isomorphism from $X$ to a subgraph of $G$. 
    In addition, as $V(X) \subseteq N_H(c)$, it follows by the second clause in \ref{dp:matching} that $L^{-1}(c') \in N_G(v)$ for each $c' \in V(X)$. Therefore, $\psi(X) \subseteq G[N_G(v)]$. Thus if $H$ contains a copy of $F$ in a neighborhood, then so does $G$.

    Now, let $X\neq X'$ be distinct copies of $F$ in $H[N_H(c)]$.
    As shown earlier, there are subgraphs $\psi(X),\,\psi(X')$ in $G[N_G(v)]$ isomorphic to $X,\,X'$ respectively. It is now enough to show that $\psi(X)\neq \psi(X')$.
    If $V(X) \neq V(X')$, then by the fact that $c \to L^{-1}(c)$ is injective for $c \in X$, we have $V(\psi(X)) \neq V(\psi(X'))$ and thus $\psi(X)\neq \psi(X')$.
    If $V(X) = V(X')$, then as $X \neq X'$, it follows there is some edge $c_1c_2 \in E(X) \setminus E(X')$ .
    Since $\psi$ is an isomorphism, the edge $L^{-1}(c_1)L^{-1}(c_2) \in E(\psi(X)) \setminus E(\psi(X'))$, and $\psi(X)\neq \psi(X')$. Thus $\psi(X)$ and $\psi(X')$ are two different copies of $F$ in $G[N_G(v)]$. It follows that if $H$ contains $\lfloor k\rfloor + 1$ copies of $F$ in $H[N_H(c)]$, then $G$ contains $\lfloor k\rfloor + 1$ copies of $F$ in $G[N_G(v)]$, contradicting the local sparsity $G$. Thus $H$ is $(k, F)$-locally-sparse.

    We now prove \ref{item:global}. Suppose $X$ is a copy of $F$ in $H$. 
    From the argument in the previous paragraph, if the map $\phi: V(X) \to V(G)$ by $c \to L^{-1}(c)$ is injective, then it follows that $\phi(X)$ is a copy of $F$ in $G$.
    If $F$ is a clique, and $X$ is a copy of $F$, then clearly \ref{dp:list_independent} implies $\phi$ is injective.
    Suppose $F$ is not a clique and let $X$ be a copy of $F$ such that $\phi$ is not injective, i.e., there exist $c, c' \in V(X)$ with $c, c' \in L(u)$ for some $u \in V(G)$. Then, as \ref{dp:list_independent} states $L(u)$ is an independent set, it follows $cc' \notin E(F)$. Thus, by the assumption of \ref{item:global}, it follows $N_X(c) \cap N_X(c') \neq \0$. However, this violates \ref{dp:matching}. Thus $\phi$ is injective.
    Therefore, if $H$ contains a copy of $F$, then so does $G$.
    As $G$ is $F$-free, we conclude $H$ is as well.
\end{proof}

\section{Proof of Proposition~\ref{prop:KST_multiple}}\label{section: proof of KST multiple}

Let us first restate the result.

\begin{prop*}[Restatement of Proposition~\ref{prop:KST_multiple}]
    Let $G$ be an $n$-vertex $(k, K_{s, t})$-sparse graph.
    For $k^\star = \frac{(s+t)^2\,n^{s+t - 1}}{2\,s^s\,t^t}$, we have
    \[|E(G)| \leq \left\{\begin{array}{cc}
        4\,n^{2-1/(st)} & \text{ if } k \leq k^\star;\vspace{5pt} \\
        2\,s^{1/t}\,t^{1/s}\,k^{1/st}\,n^{2-1/s - 1/t} & \text{ if } k > k^\star.
    \end{array}\right.\]
\end{prop*}

We will use the following result of Alon in our proof.

\begin{Lemma}[{Contrapositive of \cite[Lemma 2.1]{alon2002testing}}]\label{lemma: alon}
    Let $s \geq t \geq 1$ and let $G = (V, E)$ be a graph such that there are at most $k$ homomorphisms from a labelled copy of $H = K_{s, t}$ into $G$.
    Then, $|E| \leq k^{1/st}\,n^{2 - 1/s - 1/t}/2$.
\end{Lemma}

\begin{proof}[Proof of Proposition~\ref{prop:KST_multiple}]
    It is easy to see that there are at most $(s+t)^2\,n^{s+t - 1}$ non-injective homomorphisms from a labelled copy of $K_{s, t}$ into $G$.
    As $G$ contains at most $k$ copies of $K_{s, t}$, the number of injective homomorphisms from a labelled copy of $K_{s, t}$ into $G$ is at most
    \[2^{\mathbbm{1}\set{s = t}}\,s!\,t!\,k \,\leq\, 2\,s^s\,t^t\,k.\]
    Therefore, the number of homomorphisms from $K_{s, t}$ into $G$ is at most 
    \[(s+t)^2\,n^{s+t - 1} + 2\,s^s\,t^t\,k \,\leq\, \left\{\begin{array}{cc}
        2\,(s+t)^2\,n^{s+t - 1} &  \text{ if } k \leq k^\star; \\
        4\,s^s\,t^t\,k &  \text{ if } k > k^\star.
    \end{array}\right.\]
    As $s, t \in \N$ and $s \geq t \geq 1$, we have
    \[\left(2(s+t)^2\right)^{1/(st)} \leq 8, \quad \text{and} \quad 4^{1/(st)} \leq 4.\]
    The claim now follows by applying Lemma~\ref{lemma: alon}.
\end{proof}

\end{document}